\setlist[1]{itemsep=2pt}
\theoremstyle{plain}\newtheorem{theorem}{Theorem}[]
\theoremstyle{plain}\newtheorem*{theorem*}{Theorem}
\theoremstyle{plain}\newtheorem{proposition}[theorem]{Proposition}
\theoremstyle{plain}\newtheorem{lemma}[theorem]{Lemma}
\theoremstyle{plain}\newtheorem{corollary}[theorem]{Corollary}
\theoremstyle{plain}\newtheorem*{corollary*}{Corollary}
\theoremstyle{definition}\newtheorem{definition}[theorem]{Definition}
\theoremstyle{definition}\newtheorem{remark}[theorem]{Remark}
\theoremstyle{definition}\newtheorem{example}[theorem]{Example}
\theoremstyle{definition}
\theoremstyle{definition}
\theoremstyle{definition}
\theoremstyle{thm}
\DeclarePairedDelimiter{\norm}{\|}{\|}
\newcommand{\R}{\mathds{R}}
\newcommand{\N}{\mathds{N}}
\newcommand{\Z}{\mathds{Z}}
\newcommand{\dist}{\delta}
\renewcommand{\P}{\mathds{R}\mathrm{P}}
\title{The Grassmann Distance Complexity}
\author[A.~Lerario]{Antonio Lerario}
\address[Lerario]{Scuola Internazionale Superiore di Studi Avanzati (SISSA),
via Bonomea, 265,
34136 Trieste, Italy}
\email{lerario@sissa.it}
\author[A.~Rosana]{Andrea Rosana}
\address[Rosana]{Scuola Internazionale Superiore di Studi Avanzati (SISSA),
via Bonomea, 265,
34136 Trieste, Italy}
\email{arosana@sissa.it}
\begin{document}

\begin{abstract}

Motivated by the concept of Euclidean Distance Degree, which measures the complexity of finding the nearest point to an algebraic set in Euclidean space, we introduce the notion of Grassmann Distance Complexity (GDC). This concept quantifies the complexity of solving the nearest point problem for subanalytic sets in the Grassmannian, using the intrinsic Riemannian distance. Unlike the Euclidean case, the Grassmannian distance is neither smooth nor semialgebraic, and its study requires using Lipschitz critical point theory and o--minimal geometry.

We establish fundamental properties of GDC, including computable bounds for real algebraic varieties and conditions ensuring the finiteness of critical points. Our results also include a nonlinear version of the classical Eckart–Young theorem, which characterizes critical points of the distance function from a generic 
$k$--plane to simple Schubert varieties. 

\end{abstract}

\maketitle 
    \tableofcontents

\section{Introduction}

 The \emph{Euclidean Distance Degree}  of a real algebraic variety $X\subset \R^n$, denoted by $\mathrm{EDD}(X)$, was introduced in \cite{EDdegree}  as a measure of the complexity of finding the closest point in $X$ to a given point $z\in \R^n$. The definition is the following: first, one considers the equations for the constrained critical points of the Euclidean Distance function from  $z$ restricted to $X$. These equations are \emph{algebraic}, and the $\mathrm{EDD}(X)$ is defined to be the number of their \emph{complex} solutions, for the generic $z\in \R^n$. The Euclidean Distance Degree is an upper bound for the number of real critical points and therefore it gives a measure of the complexity of the constrained optimization problem.
 
A natural question arises: can we define a similar notion when the ambient space is the Grassmannian \( G(k, n) \) of \( k \)-dimensional planes in \( \mathbb{R}^n \), and the Euclidean distance is replaced by the Riemannian distance induced by the orthogonally invariant metric on \( G(k, n) \)? In this paper, we propose a notion that serves this purpose, which we call the \emph{Grassmann Distance Complexity} (GDC).

Our approach differs from the Euclidean case because the Riemannian distance on the Grassmannian is neither smooth nor semialgebraic, preventing the direct application of techniques from smooth analysis and algebraic geometry. Instead, we employ Clarke's notion of critical points for locally Lipschitz functions and study the subdifferential of the distance function and its singularities using o-minimal geometry. (This is why we use the term ``complexity'' rather than ``degree'': we transition from the algebraic setting to the o-minimal one.)

The main contributions of this paper are as follows:
\begin{enumerate}
    \item We define the notion of Grassmann Distance Complexity for subanalytic submanifolds of \( G(k, n) \) and prove that this is generically finite. We provide computable bounds for the Grassmann Distance Complexity in the case of real algebraic hypersurfaces.
    \item We analyze the subdifferential of the Grassmannian distance function, particularly at points in the cut locus, and describe its structure.
    
    \item We apply our results and ideas to the problem of optimizing the distance to simple Schubert varieties, relating our notions to classical results such as the Eckart--Young theorem.
\end{enumerate}

\subsection{The Riemannian Distance Function on the Grassmannian}

We begin by recalling some necessary background on the Grassmannian \( G(k, n) \), its Riemannian geometry, and the concept of principal angles between subspaces. For more details, see Section~\ref{grassmanngeometrysection}.

Given two \( k \)-dimensional planes (subspaces) \( \mathbf{E}, \mathbf{L} \in G(k, n) \), their Riemannian distance is given by
\begin{equation}\label{anglesdistanceintro}
\dist(\mathbf{E}, \mathbf{L}) = \left( \sum_{i=1}^k \theta_i^2 \right)^{1/2},
\end{equation}
where \( 0 \leq \theta_1 \leq \cdots \leq \theta_k \leq \frac{\pi}{2} \) are the principal angles between \( \mathbf{E} \) and \( \mathbf{L} \). Throughout this paper, we assume for simplicity that \( k \leq n - k \).

\begin{remark}\label{remark1}
It is useful to establish a connection between the principal angles and the singular values of certain matrices. There exists a natural isomorphism
\[
\mathbb{R}^{(n - k) \times k} \simeq T_{\mathbf{E}} G(k, n),
\]
constructed as follows: fix orthonormal bases for \( \mathbf{E} \) and its orthogonal complement \( \mathbf{E}^\perp \). Any matrix \( A \in \mathbb{R}^{(n - k) \times k} \) represents a linear map \( \mathbf{E} \to \mathbf{E}^\perp \) in these bases, and we denote by \( v_A \in T_{\mathbf{E}} G(k, n) \) the corresponding tangent vector. Under this isomorphism, the Riemannian exponential map
\[
\exp_{\mathbf{E}}: T_{\mathbf{E}} G(k, n) \longrightarrow G(k, n)
\]
maps the set of matrices with singular values less than \( \frac{\pi}{2} \) surjectively onto the whole Grassmannian, and the principal angles between \( \mathbf{E} \) and \( \exp_{\mathbf{E}}(v_A) \) coincide with the singular values of \( A \) (see Proposition~\ref{boundedprop}). We will explore more geometric properties of this construction later.
\end{remark}

\subsection{Critical Points and the Subdifferential of the Distance Function}

Suppose \( X \subseteq G(k, n) \) is a submanifold. For a generic point \( \mathbf{L} \notin X \), we are interested in minimizing the distance from \( \mathbf{L} \) to \( X \), that is, minimizing the function
\begin{equation}\label{distancefunctionintro}
\dist_{\mathbf{L}}: G(k, n) \longrightarrow \mathbb{R}_{\geq 0}, \quad \mathbf{E} \mapsto \dist(\mathbf{L}, \mathbf{E})
\end{equation}
subject to \( \mathbf{E} \in X \).

If \( \dist_{\mathbf{L}} \) were smooth, one would find the critical points by solving \( \nabla \dist_{\mathbf{L}}|_X = 0 \). However,  \( \dist_{\mathbf{L}} \) is not differentiable everywhere. Indeed, it is not even semialgebraic: while the principal angles are semialgebraic functions (being singular values of \( A \)), the inverse of the Riemannian exponential involved in computing \( \dist(\mathbf{L}, \mathbf{E}) \) is only real analytic.

Despite the lack of differentiability, \( \dist_{\mathbf{L}} \) is locally Lipschitz, and thus differentiable almost everywhere by Rademacher's Theorem. We can therefore define critical points using Clarke's generalized gradient. The subdifferential \( \partial_{\mathbf{E}} (\dist_{\mathbf{L}}) \subset T_{\mathbf{E}} G(k, n) \) is defined as
\begin{equation}\label{subdiffdefdist}
\partial_{\mathbf{E}} (\dist_{\mathbf{L}}) := \mathrm{co} \left\{ \lim_{\mathbf{E}_n \to \mathbf{E}} \nabla \dist_{\mathbf{L}} (\mathbf{E}_n) \mid \mathbf{E}_n \in \omega(\dist_{\mathbf{L}}), \ \text{limit exists} \right\},
\end{equation}
where \( \mathrm{co} \) denotes the convex hull, \( \omega(\dist_{\mathbf{L}}) \) is the set of differentiability points of \( \dist_{\mathbf{L}} \), and \( \nabla \dist_{\mathbf{L}} \) is the gradient at those points.

The function \( \dist_{\mathbf{L}} \) is smooth on \( G(k, n) \setminus (\mathrm{cut}(\mathbf{L}) \cup \{\mathbf{L}\}) \), where \( \mathrm{cut}(\mathbf{L}) \) denotes the cut locus of \( \mathbf{L} \). At differentiability points \( \mathbf{E} \), the gradient \( \nabla \dist_{\mathbf{L}}(\mathbf{E}) \) corresponds to the tangent vector at \( \mathbf{E} \) of the unique length-minimizing geodesic from \( \mathbf{L} \) to \( \mathbf{E} \).

It is known (see \cite{WongGrassmann}) that the cut locus \( \mathrm{cut}(\mathbf{L}) \) consists of planes \( \mathbf{E} \) where at least one principal angle between \( \mathbf{E} \) and \( \mathbf{L} \) equals \( \frac{\pi}{2} \). The cut locus is a codimension-one Schubert variety (Lemma~\ref{cutlemma}), stratified as
\[
\mathrm{cut}(\mathbf{L}) = \bigsqcup_{j=1}^k \Omega(j),
\]
where \( \Omega(j) \) consists of \( k \)-planes with exactly \( j \) principal angles equal to \( \frac{\pi}{2} \).

We analyze the structure of the subdifferential \( \partial_{\mathbf{E}} \dist_{\mathbf{L}} \) for \( \mathbf{E} \in \mathrm{cut}(\mathbf{L}) \) in detail (Proposition~\ref{subdiffpropo}), leading to the following result (Corollary~\ref{subdiffcoro}):

\begin{theorem}\label{subdiffcorointro}
For any \( \mathbf{L} \in G(k, n) \) and \( \mathbf{E} \in \Omega(j) \subseteq \mathrm{cut}(\mathbf{L}) \) with \( j \in \{1, \dots, k\} \), the subdifferential \( \partial_{\mathbf{E}} \dist_{\mathbf{L}} \) is a convex set linearly isomorphic to the convex hull of the orthogonal group \( O(j) \). In particular,
\[
\dim \left( \partial_{\mathbf{E}} \dist_{\mathbf{L}} \right) = j^2.
\]
\end{theorem}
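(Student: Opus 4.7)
The plan is to identify explicitly the set of limits of gradients $\nabla \dist_{\mathbf{L}}(\mathbf{E}_n)$ as $\mathbf{E}_n \to \mathbf{E}$ inside $G(k,n)\setminus \mathrm{cut}(\mathbf{L})$, and then take the convex hull prescribed by \eqref{subdiffdefdist}.

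Setup. I would choose an orthonormal basis $\{e_1, \ldots, e_n\}$ of $\R^n$ in which $\mathbf{E} = \mathrm{span}(e_1, \ldots, e_k)$ and $\mathbf{L}$ has the principal-vector frame $\ell_i = \cos\theta_i\, e_i + \sin\theta_i\, e_{k+i}$, with $\theta_1 \le \cdots \le \theta_{k-j} < \pi/2 = \theta_{k-j+1} = \cdots = \theta_k$. Working in the matrix model $T_{\mathbf{E}} G(k,n) \simeq \R^{(n-k)\times k}$ of Remark~\ref{remark1} and using the block form of the Grassmannian exponential, I would first show that the minimum-norm tangent vectors $w \in T_{\mathbf{E}} G(k,n)$ with $\exp_{\mathbf{E}}(w) = \mathbf{L}$ are exactly those represented by matrices
\[
A_Q = U_1 \Sigma' V_1^T + \tfrac{\pi}{2}\, U_2\, Q\, V_2^T, \qquad Q \in O(j),
\]
where $U_1, V_1, \Sigma'$ encode the uniquely determined non-$\pi/2$ singular data of $A$, and $U_2, V_2$ are fixed orthonormal frames of the $j$-dimensional ``$\pi/2$-subspaces'' of $\mathbf{E}^\perp$ and $\mathbf{E}$, respectively. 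Matching $\exp_{\mathbf{E}}(v_A) = \mathbf{L}$ with the canonical frame of $\mathbf{L}$ forces this parametrization, and a direct Frobenius-norm computation yields $\|A_Q\|_F = \dist(\mathbf{L}, \mathbf{E})$ independently of $Q$.

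Limits of gradients. At a smooth point $\mathbf{E}' \notin \mathrm{cut}(\mathbf{L})$ the gradient equals $-w_{\mathbf{E}'}/\|w_{\mathbf{E}'}\|$ for the unique minimum-norm preimage $w_{\mathbf{E}'}$ of $\mathbf{L}$ under $\exp_{\mathbf{E}'}$. For sequences $\mathbf{E}_n \to \mathbf{E}$ outside the cut locus, standard SVD perturbation theory applied to the representing matrices shows that the first $k-j$ left and right singular vectors converge continuously, whereas the residual $O(j)$-indeterminacy inside the degenerate $\pi/2$-block is resolved by the direction of approach: the first-order expansion of the perturbation restricted to that block, after diagonalisation, selects a specific $Q \in O(j)$. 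Conversely, any prescribed $Q$ is realised by choosing a one-parameter family $\mathbf{E}_n \to \mathbf{E}$ whose leading-order behaviour in the degenerate block corresponds to $Q$. This is the main technical step; the outcome is that the set of limits of $\nabla \dist_{\mathbf{L}}$ at $\mathbf{E}$ is exactly $\{-A_Q/\dist(\mathbf{L}, \mathbf{E}) : Q \in O(j)\}$.

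Conclusion. By \eqref{subdiffdefdist}, $\partial_{\mathbf{E}} \dist_{\mathbf{L}}$ is the convex hull of this set. Subtracting the constant shift $-U_1 \Sigma' V_1^T/\dist(\mathbf{L},\mathbf{E})$ and rescaling by $-\pi/(2\,\dist(\mathbf{L},\mathbf{E}))$, the subdifferential becomes $\mathrm{co}\{U_2 Q V_2^T : Q \in O(j)\}$. Because $U_2$ and $V_2$ have orthonormal columns, the map $Q \mapsto U_2 Q V_2^T$ is a linear injection $\R^{j\times j} \hookrightarrow \R^{(n-k)\times k}$, so this convex set is linearly isomorphic to $\mathrm{co}(O(j))$. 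A standard SVD argument (writing each singular value $\sigma_i \in [0,1]$ as $\frac{1+\sigma_i}{2}\cdot 1 + \frac{1-\sigma_i}{2}\cdot(-1)$ and combining) shows $\mathrm{co}(O(j))$ coincides with the operator-norm unit ball of $\R^{j\times j}$, which is a convex body of dimension $j^2$. The principal obstacle, as flagged above, is the realisation step: turning the SVD-perturbation formalism into an explicit geometric construction of approach directions in $G(k,n)$ that produce every prescribed $Q \in O(j)$ as a gradient limit.
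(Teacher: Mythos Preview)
Your overall strategy coincides with the paper's: parametrise the length-minimising geodesics between $\mathbf L$ and $\mathbf E$ by $O(j)$, identify the subdifferential with the convex hull of their normalised arrival velocities at $\mathbf E$, and observe that the resulting set is an affine-linear image of $\mathrm{co}(O(j))$. Your parametrisation of the preimages by $A_Q$ is essentially the paper's Proposition~\ref{preimagelemma}, and your final affine reduction matches Corollary~\ref{subdiffcoro}.

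The one substantive difference is precisely the step you flag as the principal obstacle. You propose to show that every $Q \in O(j)$ arises as a gradient limit by an SVD perturbation analysis of approaching sequences $\mathbf E_n \to \mathbf E$ outside the cut locus. The paper bypasses this entirely by invoking a general Riemannian fact (Proposition~\ref{propo:asin}, proved as in \cite[Proposition~2.1.2]{generalizedmorse}): for the distance function from a point, the subdifferential at $\mathbf E$ is the convex hull of $\{D_v\exp_{\mathbf L}(v/\|v\|) : v \in \exp_{\mathbf L}^{-1}(\mathbf E)\cap R_{\pi/2}\}$, i.e.\ the convex hull of the arrival directions of \emph{all} minimising geodesics. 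Once this is granted, your realisation step is immediate: each $Q$ labels a distinct minimising geodesic, hence automatically contributes its arrival velocity. The paper then computes these arrival velocities explicitly (Theorem~\ref{subdiffpropo}) by reversing the geodesics and reading off the initial velocity at $\mathbf E$ --- essentially your $-A_Q/\dist(\mathbf L,\mathbf E)$. Your perturbation route would ultimately work, but it is heavier and, as you acknowledge, not fully executed; the paper's route via the general Riemannian lemma is both shorter and avoids the delicate limit analysis altogether.
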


When \( X \subset G(k, n) \) is a smooth submanifold, the subdifferential of the restricted function \( \dist_{\mathbf{L}}|_X \) can be described using projections. For generic \( \mathbf{L} \), we have the following (Lemma~\ref{subdiffprojectionlemma}):

\begin{proposition}\label{subdiffprojectionlemmaintro}
Let \( X \) be a smooth submanifold of \( G(k, n) \). For a generic \( \mathbf{L} \in G(k, n) \) and every \( \mathbf{E} \in X \), we have
\[
\partial_{\mathbf{E}} (\dist_{\mathbf{L}}|_X) = \mathrm{proj}_{T_{\mathbf{E}} X} \left( \partial_{\mathbf{E}} \dist_{\mathbf{L}} \right),
\]
where \( \mathrm{proj}_{T_{\mathbf{E}} X} \) denotes the orthogonal projection onto \( T_{\mathbf{E}} X \).
\end{proposition}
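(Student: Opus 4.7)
The plan is to derive the identity from the description of $\partial_\mathbf{E}\dist_\mathbf{L}$ provided by Theorem~\ref{subdiffcorointro}, combined with the standard min-rule for the Clarke subdifferential of the minimum of a compact smooth family. The only use of genericity of $\mathbf{L}$ is to guarantee $\mathbf{L}\notin X$, so that $\dist_\mathbf{L}$ is locally Lipschitz on a neighborhood of $X$ and the whole subdifferential machinery applies. If $\mathbf{E}\in X\setminus \mathrm{cut}(\mathbf{L})$, then $\dist_\mathbf{L}$ is smooth near $\mathbf{E}$, both subdifferentials are singletons, and the identity reduces to the classical chain rule $\nabla(\dist_\mathbf{L}|_X)(\mathbf{E}) = \mathrm{proj}_{T_\mathbf{E} X}\nabla\dist_\mathbf{L}(\mathbf{E})$.

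The interesting case is $\mathbf{E}\in X\cap\Omega(j)$ with $j\geq 1$. I would use the local description of the distance function, implicit in the analysis leading to Theorem~\ref{subdiffcorointro}, as a pointwise minimum
\[
\dist_\mathbf{L}(\mathbf{E}') = \min_{g\in O(j)}\phi_g(\mathbf{E}')
\]
on a neighborhood $U$ of $\mathbf{E}$, where $(g,\mathbf{E}')\mapsto\phi_g(\mathbf{E}')$ is a jointly smooth family parameterizing by $O(j)$ the distinct minimizing geodesics from $\mathbf{L}$. The standard min-rule for the Clarke subdifferential of such a family identifies $\partial_\mathbf{E}\dist_\mathbf{L}$ with $\mathrm{co}\{\nabla\phi_g(\mathbf{E}) : g\in O(j)\}$, matching Theorem~\ref{subdiffcorointro}. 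Restricting the identity to $X$ yields $(\dist_\mathbf{L}|_X)(\mathbf{E}') = \min_{g\in O(j)}(\phi_g|_X)(\mathbf{E}')$ on $X\cap U$, with each $\phi_g|_X$ smooth and $\nabla(\phi_g|_X)(\mathbf{E}) = \mathrm{proj}_{T_\mathbf{E} X}\nabla\phi_g(\mathbf{E})$. Applying the same min-rule to the restricted family, together with the linearity of the orthogonal projection,
\[
\partial_\mathbf{E}(\dist_\mathbf{L}|_X) = \mathrm{co}\{\mathrm{proj}_{T_\mathbf{E} X}\nabla\phi_g(\mathbf{E}) : g\in O(j)\} = \mathrm{proj}_{T_\mathbf{E} X}\bigl(\mathrm{co}\{\nabla\phi_g(\mathbf{E}) : g\in O(j)\}\bigr) = \mathrm{proj}_{T_\mathbf{E} X}(\partial_\mathbf{E}\dist_\mathbf{L}),
\]
which is the claim.

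The main obstacle I expect is justifying the applicability of the min-rule: concretely, that the family $\{\phi_g\}_{g\in O(j)}$ is jointly smooth in $(g,\mathbf{E}')\in O(j)\times U$, and that $O(j)$ is the common active index set at $\mathbf{E}$ for both $\dist_\mathbf{L}$ and $\dist_\mathbf{L}|_X$. Joint smoothness follows once the parameterization of minimizing geodesics by $O(j)$ is seen to extend smoothly over a neighborhood of $\mathbf{E}$, a point that is already addressed in the proof of Theorem~\ref{subdiffcorointro} via the exponential description from Remark~\ref{remark1}; the coincidence of active sets is automatic from $\phi_g(\mathbf{E}) = \dist_\mathbf{L}(\mathbf{E})$ for every $g\in O(j)$, which persists after restricting to $X$ because $\mathbf{E}\in X$.
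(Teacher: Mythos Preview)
Your approach is genuinely different from the paper's. The paper does not write $\dist_\mathbf{L}$ as a pointwise minimum; it works directly from the definition \eqref{subdiffdef}, and the role of genericity of $\mathbf{L}$ is not merely $\mathbf{L}\notin X$ but rather to force $X$ transverse to every stratum $\Omega(j)$ of $\mathrm{cut}(\mathbf{L})$ (Lemma~\ref{generictrasnversality}). Transversality is what guarantees that sequences in $X\cap\omega(\dist_\mathbf{L})$ can approach $\mathbf{E}$ from \emph{every} connected component of a punctured neighborhood $U_\mathbf{E}\setminus\mathrm{cut}(\mathbf{L})$, so that the limits of $\nabla\dist_\mathbf{L}$ taken along $X$ coincide with the full set of limits taken in $G(k,n)$; the claimed identity then follows by linearity of the projection. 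Your assertion that genericity is needed only to ensure $\mathbf{L}\notin X$ bypasses this mechanism entirely and is not supported by the argument you give.

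The concrete gap is the existence of the jointly smooth family $\{\phi_g\}_{g\in O(j)}$ on a neighborhood of $\mathbf{E}$ with $\dist_\mathbf{L}=\min_g\phi_g$. Neither Theorem~\ref{subdiffcorointro} nor its proof (Proposition~\ref{subdiffpropo}, Corollary~\ref{subdiffcoro}) produces such a family: those results describe the fiber $\exp_\mathbf{L}^{-1}(\mathbf{E})\cap R_{\frac{\pi}{2}}$ and hence $\partial_\mathbf{E}\dist_\mathbf{L}$ at the \emph{single} point $\mathbf{E}$, with no claim about smooth extensions to nearby $\mathbf{E}'$. The natural candidate---defining $\phi_g(\mathbf{E}')$ as the length of the geodesic from $\mathbf{L}$ to $\mathbf{E}'$ obtained by locally inverting $\exp_\mathbf{L}$ near the preimage corresponding to $g$---fails for $j\ge 2$: by Proposition~\ref{preimagelemma} the fiber $\exp_\mathbf{L}^{-1}(\mathbf{E})\cap R_{\frac{\pi}{2}}$ is diffeomorphic to $O(j)$, hence of positive dimension $j(j-1)/2$, and its tangent space lies in $\ker D\exp_\mathbf{L}$ at every point, so $\exp_\mathbf{L}$ is not a local diffeomorphism there. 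Thus your claim that ``the parameterization of minimizing geodesics by $O(j)$ extends smoothly over a neighborhood of $\mathbf{E}$'' is false as stated. The min-rule strategy could plausibly be rescued by appealing instead to the semiconcavity of $\dist_\mathbf{L}$ on $G(k,n)\setminus\{\mathbf{L}\}$, for which the projection identity follows from convex analysis, but that is a different argument and needs its own justification.
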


\subsection{Grassmann Distance Complexity}

Following convex analysis, a point \( \mathbf{E} \in X \) is said to be \emph{critical} for \( \dist_{\mathbf{L}}|_X \) if the subdifferential \( \partial_{\mathbf{E}} (\dist_{\mathbf{L}}|_X) \) contains the zero vector. Minima of \( \dist_{\mathbf{L}}|_X \) are necessarily critical points. In general, critical points may not be isolated; for example, if \( X \) is a simple Schubert variety, there could be positive-dimensional families of critical points. However, for generic \( \mathbf{L} \), these points cannot be local minima (Theorem~\ref{nominimacut}).

We show in Theorem~\ref{finitecritical} that if \( X \) is compact, the set of critical points of \( \dist_{\mathbf{L}}|_X \) not in \( \mathrm{cut}(\mathbf{L}) \) is finite. The same holds for subanalytic \( X \), even if not compact. Since \( \dist_{\mathbf{L}} \) is subanalytic, we can define the following:

\begin{definition}[Grassmann Distance Complexity]\label{GDDdefintro}
Let \( X \) be a subanalytic submanifold of \( G(k, n) \). The \emph{Grassmann Distance Complexity} of \( X \) is defined as
\[
\mathrm{GDC}(X) = \max_{\mathbf{L} \in G(k, n)} \# \left\{ \mathbf{E} \in X \setminus \mathrm{cut}(\mathbf{L}) \mid \mathbf{E} \text{ is critical for } \dist_{\mathbf{L}}|_X \right\},
\]
where the maximum is taken over generic \( \mathbf{L} \in G(k, n) \).
\end{definition}

We prove that \( \mathrm{GDC}(X) \) is bounded by a constant depending only on \( X \) (Theorem~\ref{uniformboundedness}). Moreover, when \( X \) is a real algebraic variety, this bound is effectively computable since \( \dist_{\mathbf{L}} \) is definable in a Pfaffian o-minimal structure. For example, for hypersurfaces of degree \( d \), we obtain the following (Theorem~\ref{boundthm}):

\begin{theorem}\label{boundthmintro}
For every \( 0 \leq k \leq n \), there exist constants \( c_1(k, n), c_2(k, n) > 0 \) such that for every smooth hypersurface \( X \hookrightarrow G(k, n) \) of degree \( d \), the following bound holds:
\[
\mathrm{GDC}(X) \leq c_1(k, n) d^{c_2(k, n)}.
\]
\end{theorem}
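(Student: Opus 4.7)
The plan is to convert the problem, via the Riemannian exponential chart at $\mathbf{L}$, into counting isolated solutions of a Pfaffian system on $\R^{k(n-k)}$ and to apply a quantitative Khovanskii--type zero count.

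First, fix a generic $\mathbf{L}\in G(k,n)$ and work in the chart $\Phi_{\mathbf{L}}:\{A\in\R^{(n-k)\times k}:\sigma_{\max}(A)<\pi/2\}\to G(k,n)\setminus\mathrm{cut}(\mathbf{L})$, $A\mapsto \exp_{\mathbf{L}}(v_A)$, which by Remark~\ref{remark1} is a diffeomorphism. In this chart the pulled back squared distance equals $\|A\|_F^{2}=\mathrm{tr}(A^T A)$, because the principal angles between $\mathbf{L}$ and $\Phi_{\mathbf{L}}(A)$ coincide with the singular values of $A$. Choosing orthonormal bases for $\mathbf{L}$ and $\mathbf{L}^\perp$, a smooth matrix representative of the plane $\Phi_{\mathbf{L}}(A)$ may be written in closed form using $\cos(\sqrt{A^T A})$ and $A\,\mathrm{sinc}(\sqrt{A^T A})$, so each Pl\"ucker coordinate of $\Phi_{\mathbf{L}}(A)$ is a polynomial, of degree depending only on $k,n$, in the entries of these two matrix functions. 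Substituting into the polynomial $f$ of degree $d$ that cuts out $X$ in the Pl\"ucker embedding produces a real--analytic equation $F(A)=0$ for $\Phi_{\mathbf{L}}^{-1}(X)$ in the chart.

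The critical points of $\dist_{\mathbf{L}}|_X$ belonging to $X\setminus\mathrm{cut}(\mathbf{L})$ then correspond bijectively to the solutions of the Lagrange system
\[
F(A)=0,\qquad A=\lambda\,\nabla F(A),\qquad \lambda\in\R,
\]
and by Theorem~\ref{finitecritical} these are isolated for generic $\mathbf{L}$. The entries of $\cos(\sqrt{A^T A})$ and $\mathrm{sinc}(\sqrt{A^T A})$ satisfy polynomial differential relations whose coefficients are polynomials in the entries of $A$, and hence fit into a Pfaffian chain of length $r(k,n)$ depending only on $(k,n)$. Consequently the whole system above is Pfaffian, with format (length and degrees of the chain, and polynomial degrees of the defining equations) bounded by constants depending only on $k,n$, together with a linear factor of $d$ arising from the composition with $f$.

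Applying the Khovanskii--Gabrielov--Vorobjov theorem on the number of non--degenerate zeros of a Pfaffian system in the $k(n-k)+1$ variables $(A,\lambda)$ yields a bound of the form $c_1(k,n)\,d^{c_2(k,n)}$ for the number of isolated critical points, which is exactly the claim. The main obstacle is the precise packaging of $\cos(\sqrt{A^T A})$ and $A\,\mathrm{sinc}(\sqrt{A^T A})$ into a genuine Pfaffian chain whose length and degrees are controlled by $(k,n)$ alone, so that all the $d$--dependence is isolated in the composition with $f$ and can be read off explicitly; once this bookkeeping is carried out, the quantitative estimate is a standard application of Khovanskii's theory.
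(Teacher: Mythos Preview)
Your overall strategy---pull back to the exponential chart at $\mathbf{L}$, write the critical equations as a Pfaffian system, and invoke a Khovanskii--type bound---is exactly the paper's strategy. However, there is a genuine gap precisely at the step you yourself flag as ``the main obstacle'': you have not shown that the entries of $\cos(\sqrt{A^{\top}A})$ and $A\,\mathrm{sinc}(\sqrt{A^{\top}A})$ lie in a Pfaffian chain of controlled length and degree in the matrix entries $A_{ij}$. This is not bookkeeping. While these matrix functions are real--analytic in $A$ (they are entire in $A^{\top}A$), Pfaffianity requires a \emph{triangular} system of polynomial first--order ODEs, and for matrix--valued trigonometric functions of $\sqrt{A^{\top}A}$ no such chain is available off the shelf; the non--commutativity and the implicit square root make even writing down candidate differential relations delicate. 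Without this, the quantitative zero count cannot be applied and the $d$--dependence cannot be isolated.

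The paper avoids this difficulty by changing variables: instead of working with $A$, it parametrizes by the SVD factors $(U,V,\mu)\in \mathrm{St}(k,n-k)\times O(k)\times(0,\pi/2)^k$, so that the only transcendental ingredients are the \emph{scalar} functions $\cos\mu_i,\sin\mu_i$, for which an explicit Pfaffian chain of order~$4$ and degree~$2$ is classical. The price is (i) adding the quadratic constraints $U^{\top}U=\mathds 1$, $V^{\top}V=\mathds 1$ to the Lagrange system, and (ii) the parametrization $(U,V,\mu)\mapsto A=U\,\mathrm{diag}(\mu)\,V^{\top}$ is not injective, so the critical set in these coordinates is no longer finite; the paper therefore bounds its number of connected components via a Betti--number estimate for semi--Pfaffian sets and then argues that each component maps to a single critical point in $X$. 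Your Lagrange formulation $A=\lambda\nabla F(A)$ is correct (the Lagrange condition is metric--independent, so the non--flat pullback metric causes no trouble), and if you could establish the Pfaffian chain for the matrix trigonometric functions you would get a cleaner finite system and could use a straight zero count; but as written, that step is missing.
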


\subsection{Optimizing the Distance to Simple Schubert Varieties}

We apply our framework to optimize the distance to simple Schubert varieties. Fix a \( k \)--plane \( \mathbf{W} \in G(k, n) \) and define
\[
\Omega_s := \left\{ \mathbf{E} \in G(k, n) \mid \dim (\mathbf{E} \cap \mathbf{W}) \geq s \right\},
\]
for \( s \in \{1, \dots, k - 1\} \). Note that \( \Omega_k = \{\mathbf{W}\} \).

Recalling Remark~\ref{remark1}, the principal angles between \( \mathbf{W} \) and \( \mathbf{E} \) correspond to the singular values of a matrix \( A \in \mathbb{R}^{(n - k) \times k} \). Let \( \mathcal{M}_{\leq k - s} \) be the set of matrices with rank at most \( k - s \) and \( R_{\frac{\pi}{2}} \) the set of matrices with singular values less than \( \frac{\pi}{2} \). Under the isomorphism, we have (Proposition~\ref{boundedrankomega})
\[
\Omega_s = \exp_{\mathbf{W}} \left( \mathcal{M}_{\leq k - s} \cap R_{\frac{\pi}{2}} \right).
\]

For \( 0 < \epsilon < \frac{\pi}{2} \), let \( \widetilde{B} = B(0, \epsilon) \subset \mathbb{R}^{(n - k) \times k} \) and \( B = \exp_{\mathbf{W}} (\widetilde{B}) \). Locally, we have a diffeomorphism of pairs
\begin{equation}\label{eq:expdiff}
\left( \widetilde{B}, \mathcal{M}_{\leq k - s} \cap \widetilde{B} \right) \stackrel{\exp_{\mathbf{W}}}{\longrightarrow} \left( B, \Omega_s \cap B \right).
\end{equation}

Adding metrics to the picture, note that for \( \mathbf{L} \in B \), there exists a unique \( A_{\mathbf{L}} \in \widetilde{B} \) such that \( \exp_{\mathbf{W}} (A_{\mathbf{L}}) = \mathbf{L} \), and the principal angles between \( \mathbf{W} \) and \( \mathbf{L} \) are the singular values of \( A_{\mathbf{L}} \). Therefore,
\[
\dist(\mathbf{L}, \mathbf{W}) = \| A_{\mathbf{L}} \|_{\mathrm{F}},
\]
where \( \| \cdot \|_{\mathrm{F}} \) denotes the Frobenius norm.

In the radial directions, the Riemannian distance and the Euclidean distance on the tangent space coincide. However, the metrics \( \exp_{\mathbf{W}}^* g_{G(k, n)} \) and the Euclidean metric \( \langle \cdot, \cdot \rangle_{\mathrm{F}} \) only coincide at the origin; they converge to each other on small balls after rescaling (Proposition~\ref{metricconvergence}).

When \( \mathbf{L} \notin \Omega_s \) (implying \( A_{\mathbf{L}} \notin \mathcal{M}_{\leq k - s} \)), there are two natural optimization problems:
\begin{enumerate}
    \item Minimizing the distance from \( A_{\mathbf{L}} \) to \( \mathcal{M}_{\leq k - s} \) with respect to the Frobenius norm.
    \item Minimizing the distance from \( \mathbf{L} \) to \( \Omega_s \) in the Grassmannian.
\end{enumerate}

Under the diffeomorphism \eqref{eq:expdiff}, these problems are set--wise the same but involve different metrics. The first is a linear problem solved using the Eckart--Young theorem (Theorem~\ref{EY}). We investigate the relationship between the solutions of the linear and nonlinear problems.

Using the convergence of metrics at small scales, we show that Grassmannian critical points are perturbations of the Eckart--Young critical points (Proposition~\ref{convergenceprop}). Surprisingly, the Eckart--Young critical points are also Grassmannian critical points.

\begin{theorem}\label{standardEYomegasintro}
Let \( \mathbf{L} \notin \Omega_s \) be generic and \( A_{\mathbf{L}} \in R_{\frac{\pi}{2}} \) such that \( \exp_{\mathbf{W}} (A_{\mathbf{L}}) = \mathbf{L} \). For any subset \( I \subseteq \{1, \dots, k\} \) with \( |I| = k - s \), let \( A_{\mathbf{L}, I} \in \mathcal{M}_{\leq k - s} \) be the rank--\( (k - s) \) approximation of \( A_{\mathbf{L}} \) obtained via the Eckart--Young theorem. Define \( \mathbf{L}_I = \exp_{\mathbf{W}} (A_{\mathbf{L}, I}) \in \Omega_s^{\mathrm{sm}} \). Then \( \mathbf{L}_I \) is a critical point for \( \dist_{\mathbf{L}}|_{\Omega_s} \).
\end{theorem}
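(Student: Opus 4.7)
The plan is to exploit the principal vector decomposition of the pair $(\mathbf{W},\mathbf{L})$, which places $\mathbf{W}$, $\mathbf{L}$, and $\mathbf{L}_I$ simultaneously inside a single totally geodesic flat torus of $G(k,n)$, and then to verify the critical point condition as an orthogonality check in the tangent Frobenius pairing at $\mathbf{L}_I$.

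Before computing anything I would establish two preliminaries. Because $A_{\mathbf{L}} \in R_{\pi/2}$ all principal angles $\theta_1,\ldots,\theta_k$ between $\mathbf{W}$ and $\mathbf{L}$ are strictly less than $\pi/2$; the principal angles between $\mathbf{L}_I$ and $\mathbf{L}$ are $\{\theta_i : i \notin I\}$ together with $s$ zeros, so $\mathbf{L}_I \notin \mathrm{cut}(\mathbf{L})$ by the description of the cut locus recalled in the introduction. For generic $\mathbf{L}$ one may further assume the $\theta_i$ are nonzero and distinct, so $\dim(\mathbf{L}_I \cap \mathbf{W}) = s$ exactly, confirming that $\mathbf{L}_I \in \Omega_s^{\mathrm{sm}}$. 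By Proposition~\ref{subdiffprojectionlemmaintro} the subdifferential $\partial_{\mathbf{L}_I}(\dist_{\mathbf{L}}|_{\Omega_s})$ is the orthogonal projection of the singleton $\{\nabla \dist_{\mathbf{L}}(\mathbf{L}_I)\}$ onto $T_{\mathbf{L}_I}\Omega_s$, so the criticality statement reduces to
\[
\nabla \dist_{\mathbf{L}}(\mathbf{L}_I) \perp T_{\mathbf{L}_I} \Omega_s.
\]

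To compute the gradient I would use the SVD $A_{\mathbf{L}} = \tilde{U}\Sigma \tilde{V}^T$ to obtain an orthonormal basis $\tilde{p}_1,\ldots,\tilde{p}_k$ of $\mathbf{W}$ and an orthonormal family $\tilde{u}_1,\ldots,\tilde{u}_k \in \mathbf{W}^\perp$ adapted to the principal angles, so that $\mathbf{L}$ is spanned by $f_i = \cos\theta_i\, \tilde{p}_i + \sin\theta_i\, \tilde{u}_i$ and $\mathbf{L}_I$ by $e_i = f_i$ for $i \in I$ and $e_i = \tilde{p}_i$ for $i \notin I$. All three planes then lie in the totally geodesic flat torus $T \subset G(k,n)$ built from the 2-planes $\Pi_i = \mathrm{span}(\tilde{p}_i, \tilde{u}_i)$, and the minimising geodesic from $\mathbf{L}_I$ to $\mathbf{L}$ rotates only the coordinates $i \notin I$ by angle $\theta_i$. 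In the identification $T_{\mathbf{L}_I} G(k,n) \simeq \mathrm{Hom}(\mathbf{L}_I, \mathbf{L}_I^\perp)$ the gradient is represented, up to sign and normalisation, by
\[
\phi_0 : e_i \longmapsto \theta_i\, \tilde{u}_i \quad (i \notin I), \qquad e_i \longmapsto 0 \quad (i \in I).
\]

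Finally I would compute $T_{\mathbf{L}_I}\Omega_s$ from the determinantal presentation of $\Omega_s$ as the locus where the quotient map to $\mathbb{R}^n/\mathbf{W}$ has rank at most $k-s$, obtaining in the local chart $\mathrm{Hom}(\mathbf{L}_I,\mathbf{L}_I^\perp)$ the description
\[
T_{\mathbf{L}_I}\Omega_s = \{\phi : \phi(\mathbf{L}_I \cap \mathbf{W}) \subseteq \pi_{\mathbf{L}_I^\perp}(\mathbf{W})\}.
\]
A direct calculation in the principal vector frame gives $\mathbf{L}_I \cap \mathbf{W} = \mathrm{span}(\tilde{p}_i : i \notin I)$ and $\pi_{\mathbf{L}_I^\perp}(\mathbf{W}) = \mathrm{span}(f_i^\ast : i \in I)$, where $f_i^\ast = -\sin\theta_i\,\tilde{p}_i + \cos\theta_i\,\tilde{u}_i$. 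The Frobenius pairing of $\phi_0$ with any $\phi \in T_{\mathbf{L}_I}\Omega_s$ is then
\[
\langle \phi_0, \phi \rangle_F = \sum_{i \notin I} \theta_i\, \langle \tilde{u}_i, \phi(e_i) \rangle,
\]
and each $\phi(e_i)$ lies in $\mathrm{span}(f_j^\ast : j \in I)$, which is orthogonal to $\tilde{u}_i$ for $i \notin I$ since $\langle \tilde{u}_i, \tilde{p}_j\rangle = 0$ and $\langle \tilde{u}_i, \tilde{u}_j\rangle = \delta_{ij} = 0$. Hence $\phi_0 \perp T_{\mathbf{L}_I}\Omega_s$, which concludes the proof.

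The main obstacle is the linear-algebraic bookkeeping in the principal vector frame: correctly identifying $\pi_{\mathbf{L}_I^\perp}(\mathbf{W})$ and the tangent space to $\Omega_s$ at a smooth point where the intersection with $\mathbf{W}$ is nontrivial but not the whole of $\mathbf{L}_I$. Once these two descriptions are aligned, the vanishing of the Frobenius pairing is essentially the Grassmannian counterpart of the classical normality condition $A_{\mathbf{L}} - A_{\mathbf{L},I} \perp T_{A_{\mathbf{L},I}}\mathcal{M}_{\leq k-s}$ characterising Eckart--Young critical points, transported from the tangent space at $\mathbf{W}$ to the tangent space at $\mathbf{L}_I$.
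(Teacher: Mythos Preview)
Your argument is correct and follows essentially the same route as the paper's proof of \cref{standardEYomegas}: both set up the principal vector frame for the pair $(\mathbf W,\mathbf L)$, identify an explicit orthonormal basis of $\mathbf L_I$, compute the tangent to the minimizing geodesic from $\mathbf L_I$ to $\mathbf L$ as the map $\varphi_{M_I}$ (your $\phi_0$), and then check that this vector is normal to $\Omega_s$ at $\mathbf L_I$.

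One point worth flagging is that your description of the tangent space,
\[
T_{\mathbf L_I}\Omega_s \;=\; \{\phi:\mathbf L_I\to\mathbf L_I^\perp \mid \phi(\mathbf L_I\cap\mathbf W)\subseteq \pi_{\mathbf L_I^\perp}(\mathbf W)\},
\]
is the correct one (it has the right codimension $s(n-2k+s)$), whereas the paper's formula \eqref{tangentschubert} replaces $\pi_{\mathbf L_I^\perp}(\mathbf W)$ by $\mathbf L_I^\perp\cap\mathbf W$. In the situation at hand $\mathbf L_I^\perp\cap\mathbf W=\{0\}$, so the paper's version gives a strictly smaller ``tangent space'' and hence a strictly larger ``normal space''; showing $\varphi_{M_I}$ lies in that larger space is a priori weaker than what is needed. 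The conclusion is nevertheless valid because, as your pairing computation shows, $\varphi_{M_I}(p_j)=\theta_j n_j$ is orthogonal to every $f_i^\ast=-\sin\theta_i\,p_i+\cos\theta_i\,n_i$ for $i\in I$, $j\notin I$, so $\varphi_{M_I}$ in fact lies in the true normal space. Your verification via the Frobenius pairing makes this step transparent.
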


In fact, the optimization problem can be completely solved using the Eckart--Young critical points (see Theorem~\ref{conjecture}):

\begin{theorem}[Nonlinear Eckart--Young]
For generic \( \mathbf{L} \notin \Omega_s \), the restriction of \( \dist_{\mathbf{L}} \) to \( \Omega_s \) admits a unique global minimizer, corresponding to the minimum given by the Eckart--Young theorem.
\end{theorem}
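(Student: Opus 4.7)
The plan is to show that the Eckart--Young optimal plane attains the global minimum of $\dist_{\mathbf{L}}$ on $\Omega_s$ via a matching lower bound from principal-angle interlacing, and then to deduce uniqueness from the distinctness of the singular values guaranteed by the genericity hypothesis.

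First I would use the $O(k) \times O(n-k)$ action stabilizing $\mathbf{W}$ to choose orthonormal bases $v_1,\dots,v_k$ of $\mathbf{W}$ and $u_1,\dots,u_{n-k}$ of $\mathbf{W}^\perp$ adapted to the SVD of $A_{\mathbf{L}}$, so that
\[
\mathbf{L} = \mathrm{span}\bigl(\cos\sigma_i\, v_i + \sin\sigma_i\, u_i : i=1,\dots,k\bigr),
\]
where $\sigma_1 > \cdots > \sigma_k > 0$ are the (distinct, by genericity) singular values of $A_{\mathbf{L}}$. The Eckart--Young critical point of Theorem~\ref{standardEYomegasintro} of smallest Frobenius distance is obtained by zeroing the $s$ smallest $\sigma_i$, and its image under $\exp_{\mathbf{W}}$ is the candidate $\mathbf{E}^* := F^* \oplus G^* \in \Omega_s$, where
\[
F^* := \mathrm{span}(v_{k-s+1},\dots,v_k) \subset \mathbf{W}, \qquad G^* := \mathrm{span}(\cos\sigma_i\, v_i + \sin\sigma_i\, u_i : i=1,\dots,k-s) \subset \mathbf{L}.
\]
A direct orthogonality check gives $F^* \perp G^*$, and $G^* \subset \mathbf{L} \cap \mathbf{E}^*$ accounts for $k-s$ zero principal angles, while the diagonal pairing $\cos\sigma_i v_i + \sin\sigma_i u_i \leftrightarrow v_i$ for $i \geq k-s+1$ yields the remaining principal angles $\sigma_{k-s+1},\dots,\sigma_k$. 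Thus $\dist(\mathbf{L},\mathbf{E}^*)^2 = \sigma_{k-s+1}^2 + \cdots + \sigma_k^2$, matching the Eckart--Young value.

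The heart of the proof is the matching lower bound $\dist(\mathbf{L},\mathbf{E})^2 \geq \sigma_{k-s+1}^2 + \cdots + \sigma_k^2$ for every $\mathbf{E} \in \Omega_s$. Pick any $s$-dimensional $F_0 \subseteq \mathbf{E}\cap\mathbf{W}$. The operator inequalities $P_{\mathbf{L}} P_{F_0} P_{\mathbf{L}} \leq P_{\mathbf{L}} P_{\mathbf{E}} P_{\mathbf{L}}$ and $P_{\mathbf{L}} P_{F_0} P_{\mathbf{L}} \leq P_{\mathbf{L}} P_{\mathbf{W}} P_{\mathbf{L}}$, combined with a rank-$(k-s)$ Cauchy interlacing (since $P_{\mathbf{E}} - P_{F_0}$ has rank $k-s$), yield
\[
\theta_{i+k-s}(\mathbf{L},\mathbf{E}) \;\geq\; \theta_i(\mathbf{L},F_0) \;\geq\; \theta_i(\mathbf{L},\mathbf{W}) = \sigma_{k-i+1}, \qquad i=1,\dots,s,
\]
in the increasing-angle convention of \eqref{anglesdistanceintro}. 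Summing squares over $i=1,\dots,s$ and dropping the smaller $k-s$ angles gives the desired bound.

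Uniqueness, which I expect to be the main obstacle, would follow from a careful analysis of the equality case. Tight equality forces $\theta_i(\mathbf{L},\mathbf{E})=0$ for $i \leq k-s$, so $\dim(\mathbf{L}\cap\mathbf{E}) \geq k-s$; combined with $\mathbf{L}\cap\mathbf{W}=0$ (generic) and $\dim(\mathbf{E}\cap\mathbf{W}) \geq s$, a dimension count yields the vector-space decomposition $\mathbf{E} = F + G$ with $F = \mathbf{E}\cap\mathbf{W}$ of dimension $s$ and $G = \mathbf{E}\cap\mathbf{L}$ of dimension $k-s$. Tight equality in the middle inequality $\theta_i(\mathbf{L},F) = \theta_i(\mathbf{L},\mathbf{W})$ says that $P_F P_{\mathbf{L}} P_F$ realizes the top $s$ eigenvalues of $P_{\mathbf{W}} P_{\mathbf{L}} P_{\mathbf{W}}$; since these are simple by the distinctness of the $\sigma_i$, Ky~Fan maximality forces $F = F^*$. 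The outer equality then translates, via a trace computation, into $P_{F^*} P_G P_{F^*}=0$, i.e.\ $G \perp F^*$; together with $G \subset \mathbf{L}$ this gives $G \subseteq F^{*\perp} \cap \mathbf{L} = G^*$, and dimension pins down $G = G^*$. The most delicate step is the trace-equality identification of $G$, which requires disentangling the (possibly non-orthogonal) decomposition $\mathbf{E} = F + G$ and uses the genericity $\sigma_1 > \cdots > \sigma_k$ essentially.
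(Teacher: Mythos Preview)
Your approach is essentially the same as the paper's. Both proofs hinge on the same two--sided interlacing inequality for principal angles: for an $s$--dimensional $F_0\subseteq\mathbf E\cap\mathbf W$ one has
\[
\theta_i(\mathbf L,\mathbf W)\;\le\;\theta_i(\mathbf L,F_0)\;\le\;\theta_{i+k-s}(\mathbf L,\mathbf E),\qquad i=1,\dots,s,
\]
which you derive via the operator inequalities $P_{\mathbf L}P_{F_0}P_{\mathbf L}\le P_{\mathbf L}P_{\mathbf W}P_{\mathbf L}$ and $P_{\mathbf L}P_{F_0}P_{\mathbf L}\le P_{\mathbf L}P_{\mathbf E}P_{\mathbf L}$ together with a rank--$(k-s)$ Weyl/Cauchy step, and which the paper derives from a singular--value interlacing for submatrices (its Corollary~\ref{anglescorollary}). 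Summing squares then gives the matching lower bound, and the candidate $\mathbf E^\ast$ is identified exactly as in the paper. The uniqueness analysis is also parallel: equality forces the first $k-s$ angles of $(\mathbf L,\mathbf E)$ to vanish, a dimension count gives $\mathbf E=F\oplus G$ with $F=\mathbf E\cap\mathbf W$ of dimension~$s$ and $G=\mathbf E\cap\mathbf L$ of dimension~$k-s$, and the simplicity of the $\theta_i(\mathbf L,\mathbf W)$ pins down $F=F^\ast$ (your Ky~Fan argument is the paper's ``multiplicity one'' argument). The only point where you are more cautious than the paper is the very last step---identifying $G$---which you flag as delicate; the paper passes over it by declaring the decomposition orthogonal and writing $\mathbf E_{\mathbf L}=\mathbf L\cap(\mathbf E_{\mathbf W})^{\perp}$ without further justification. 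Your suggested route (showing $G\perp F^\ast$) is correct in spirit, though the phrase ``trace computation'' is vague; one clean way to close it is to note that the principal vectors of $\mathbf L$ for the $s$ nonzero angles of $(\mathbf L,\mathbf E)$ must lie in $\mathbf L\ominus G$, and matching them with the already--determined pairs $(p_i,q_i)$ forces $p_1,\dots,p_s\perp G$, hence $G\subseteq\mathrm{span}\{p_{s+1},\dots,p_k\}=G^\ast$.
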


Since the Eckart--Young theorem yields \( \binom{k}{s} \) critical points, Theorem~\ref{standardEYomegasintro} provides a lower bound for the Grassmann Distance Complexity of a simple Schubert variety:
\[
\mathrm{GDC}(\Omega_s) \geq \binom{k}{s}.
\]

Globally, there are more critical points because \( \Omega_s \) is compact. As for local maxima, they cannot be found using this procedure---they lie at infinity. Nevertheless, we prove:

\begin{theorem}\label{maximatheoremintro}
For generic \( \mathbf{L} \notin \Omega_s \), the restriction of \( \dist_{\mathbf{L}} \) to \( \Omega_s \) admits a Grassmannian \( G(k - s, n - k - s) \) of global maximizers, which can be explicitly computed. If \( 0 < \theta_1(\mathbf{L}) < \dots < \theta_k(\mathbf{L}) < \frac{\pi}{2} \) are the principal angles between \( \mathbf{L} \) and \( \mathbf{W} \), then
\[
\max \dist_{\mathbf{L}}|_{\Omega_s} = \left( \theta_{k - s + 1}(\mathbf{L})^2 + \dots + \theta_k(\mathbf{L})^2 + (k - s) \left( \frac{\pi}{2} \right)^2 \right)^{1/2}.
\]
\end{theorem}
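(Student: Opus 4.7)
The plan is to exhibit an explicit $G(k-s, n-k-s)$-family of $k$-planes attaining the claimed value, and then to prove the matching upper bound via classical interlacing inequalities (Poincaré/Cauchy separation) applied to cosines of principal angles.

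\emph{Candidate maximizers.} For generic $\mathbf{L}$ the principal angles $\theta_i := \theta_i(\mathbf{L})$ are distinct, so the corresponding principal vectors in $\mathbf{W}$ are well defined. Let $V \subset \mathbf{W}$ be the $s$-plane spanned by the principal vectors associated with the $s$ largest angles $\theta_{k-s+1},\dots,\theta_k$, and set $U := (V+\mathbf{L})^{\perp} \subset \R^n$, which has dimension $n-k-s$ (and is large enough to contain a $(k-s)$-plane by the blanket assumption $k \leq n-k$). For every $F \in G(k-s, U)$ define $\mathbf{E}_F := V \oplus F$. Then $V \subset \mathbf{E}_F \cap \mathbf{W}$, so $\mathbf{E}_F \in \Omega_s$; moreover $V \perp F$, $F \perp \mathbf{L}$, and by construction the principal angles between $V$ and $\mathbf{L}$ are exactly $\theta_{k-s+1},\dots,\theta_k$. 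Hence the principal angles between $\mathbf{E}_F$ and $\mathbf{L}$ are $\theta_{k-s+1},\dots,\theta_k$ together with $k-s$ copies of $\pi/2$, which yields the announced value of $\dist(\mathbf{L},\mathbf{E}_F)$.

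\emph{Upper bound.} Fix an arbitrary $\mathbf{E} \in \Omega_s$, let $V' := \mathbf{E} \cap \mathbf{W}$ of dimension $s' \geq s$, and write $\alpha_1 \leq \dots \leq \alpha_{s'}$ for the principal angles between $V'$ and $\mathbf{L}$, and $\phi_1 \leq \dots \leq \phi_k$ for those between $\mathbf{E}$ and $\mathbf{L}$. Since cosines of principal angles are singular values of the relevant orthogonal projection onto $\mathbf{L}$, the Poincaré separation theorem applied along the inclusions $V' \subset \mathbf{W}$ and $V' \subset \mathbf{E}$ gives
\[
\alpha_i \leq \theta_{k-s'+i} \qquad \text{and} \qquad \phi_i \leq \alpha_i \qquad (i=1,\dots,s').
\]
Combining with the trivial bound $\phi_i \leq \pi/2$ for $i > s'$ yields
\[
\sum_{i=1}^{k} \phi_i^2 \ \leq\ \sum_{i=1}^{s'} \theta_{k-s'+i}^2 + (k-s')\bigl(\tfrac{\pi}{2}\bigr)^{2}.
\]
Each decrement of $s'$ by one increases the right-hand side by a nonnegative quantity of the form $(\pi/2)^2 - \theta_j^2$, so we may replace $s'$ by $s$ to recover the claimed inequality for $\dist(\mathbf{L},\mathbf{E})^2$.

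\emph{Characterization of maximizers.} Saturating the bound forces every inequality above to be an equality. Genericity ($\theta_j < \pi/2$) forces $s' = s$; the Poincaré inequalities then force $\alpha_i = \theta_{k-s+i}$, and distinctness of the $\theta_j$ identifies $V'$ uniquely with the principal subspace $V$ of Step~1; finally $\phi_i = \pi/2$ for $i > s$ forces the orthogonal complement of $V$ inside $\mathbf{E}$ to lie in $\mathbf{L}^\perp \cap V^\perp = U$. Hence the maximizers are exactly the family $\{\mathbf{E}_F : F \in G(k-s, U)\} \cong G(k-s, n-k-s)$. The only genuinely delicate point is the chained bookkeeping of the two Poincaré separations in Step~2; the rest is standard principal-angle linear algebra.
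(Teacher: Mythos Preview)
Your proof is correct and follows essentially the same route as the paper: the paper proves the upper bound by applying the same interlacing inequality (stated there as a corollary of Horn--Johnson's singular-value interlacing under column deletion, which is precisely the Poincar\'e separation you invoke) twice along the inclusions $V'\subset \mathbf W$ and $V'\subset \mathbf E$, and constructs the same $G(k-s,n-k-s)$ family of maximizers $\mathbf E = V \oplus F$ with $F$ ranging over $(k-s)$-planes in $\mathbf L^{\perp}\cap V^{\perp}$. Your characterization of when equality holds is also the same as the paper's.
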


\section{Preliminaries}
In this section we introduce some useful concepts and notation that we will use in the rest of the paper. Most of the material covered in this specific section is standard and can be found, for instance, in \cite{matrixanalysis, Kozlov, grassmannhandbook, metric}. We first recall the well--known Eckart--Young Theorem in the context of low--rank approximation for matrices. Then we introduce the Grassmann manifold as a quotient of the orthogonal group and discuss its standard Riemannian structure and the corresponding notion of distance between planes. 

\subsection{Low--rank matrix approximation and the Eckart--Young Theorem}
 A recurring problem in applications is to find a suitable way of approximating a matrix with another one of smaller rank. This has to be done in such a way that the loss of information is the minimum possible. Geometrically, this corresponds to minimizing the distance between the original matrix and its approximation with respect to some meaningful metric. 
        
A natural metric on the space of $n \times m$ matrices is the one induced by the \emph{Frobenius scalar product}. This is defined as follows: for $A=(a_{ij}) \in \R^{n\times m}$ and $B=(b_{ij})\in \R^{n\times m}$ the Frobenius scalar product between $A$ and $B$ is given by 
        \begin{align}\label{eq:frobs}
            \langle A,B\rangle_F := \mathrm{tr}(A^{\top}B)  = \sum_{i,j=1}^{n,m}a_{ij}b_{ij}, 
        \end{align}
where $\mathrm{tr}$ denotes the trace of a matrix. Remark that this is just the Euclidean scalar product when you vectorize a matrix and see it as a vector in $\R^{nm}$.
The induced Frobenius distance on $\R^{n\times m}$ is 
        \begin{align}\label{eq:frobd}
            \widehat{\dist}(A,B):=\| A-B\|_F=\bigg(\langle A-B, A-B \rangle_F\bigg)^{\frac{1}{2}}=\bigg(\sum_{i,j=1}^{n,m}(a_{ij}-b_{ij})^2\bigg)^{\frac{1}{2}}.
        \end{align}
        
        In order to rigorously state the approximation problem, we introduce the following notation for sets of matrices with rank constraints.
        \begin{definition}\label{def:rankconstraint}Let $r, m, n\in \N$ with $r\leq \min\{n, m\}$. We define:
\begin{align}\label{boundednotation}
    \mathcal{M}_{\leq r}&:=\{A \in \R^{n \times m} \ | \ \mathrm{rank}(A)\leq r\}, \\
    \mathcal{M}_{r}&:=\{A \in \R^{n \times m} \ | \ \mathrm{rank}(A)=r\}.
\end{align}
\end{definition}

Recall that the set $\mathcal{M}_{\leq r}$ is a real algebraic variety (defined in $\R^{n\times m}$ by the vanishing of all $(r+1)\times (r+1)$ minors of a matrix), whose set of smooth points is $\mathcal{M}_r$.

        Using these notions, the low--rank approximation problem becomes the following: given a matrix $A \in \R^{n\times m}$ and $r<\mathrm{rank}(A)$, find a matrix of rank $r$ that minimizes the Frobenius distance from $A$. In other words: we are asked to minimize the function 
        \[\widehat{\dist}_A:\R^{n\times m} \to \R, \quad \widehat{\dist}_A(X):=\|X-A\|_F,\]
        constrained to the set $\mathcal{M}_{\leq r}.$

A complete answer to this problem is given by the Eckart--Young Theorem \cite{EckartYoung}. In order to state the result, recall that any matrix $A \in \R^{n\times m}$ admits a \emph{Singular Values Decomposition} (SVD). Assuming without loss of generality that $m \leq n$, this is a factorization $A=U\Sigma V^{\top}$ where $U \in O(n)$, $V \in O(m)$ and $\Sigma=\mathrm{diag}(\sigma_1,\dots,\sigma_m) \in \R^{n\times m}$, with $\sigma_i\geq 0$ for all $i=1, \ldots, m$. The elements $\sigma_1,\dots,\sigma_m$ are called \emph{singular values} of $A$ and they are the square roots of the eigenvalues of $A^{\top}A$. One can easily show that the Frobenius norm of a matrix $A$ can be expressed in terms of its singular values as 

        \begin{align}
            \|A\|_F = \bigg(\sum_{i=1}^m \sigma_i ^2 \bigg)^{\frac{1}{2}}. 
        \end{align}
        
        \begin{remark}[Non-uniqueness of the SVD]
            The SVD of a matrix $A \in \R^{n \times m}$ is not unique. A first source for the lack of uniqueness comes from the different possible orderings of the singular values and consequent reordering of the corresponding columns of $U$ and $V$. Even if we fix an ordering of the singular values, non-uniqueness still arises in the case of repeated eigenvalues. Lastly, if $m < n$, then the last $n-m$ columns of the orthogonal matrix $U \in O(n)$ give no contribution in the decomposition $A=U\Sigma V^{\top}$. Therefore different orthogonal completions of the first $m$ columns of $U$ will still give the same matrix $A$. 
        \end{remark}
        
We can now state the Eckart--Young Theorem. 

\begin{theorem}[\cite{EckartYoung}]\label{EY}
Let $A \in \R^{n \times m}$ be a matrix with distinct singular values. Let also $A=U\Sigma V^{\top}$ be a singular values decomposition with the singular values ordered as $\sigma_1>\dots>\sigma_m>0$ (we assume $m\leq n$). For any subset $I\subseteq \{1,\dots,m\}$ with $| I | =r$ define $\Sigma_I$ as the rectangular $n\times m$ diagonal matrix with entries $(\Sigma_I)_{ii}=\sigma_i$ if $i \in I$ and $(\Sigma_I)_{ii}=0$ if $i \notin I$. Define also $A_I = U \Sigma_I V^{\top}$, that is a rank--$r$ matrix. Then:
\begin{enumerate}[label=(\roman*)]
\item the critical points of $\widehat{\dist}_A$, the distance function from $A$, constrained to the smooth manifold $\mathcal{M}_r$ are all and only the matrices $A_I$ with $| I | = r$. In particular, restricted to this manifold,  the function $\widehat{\dist}_A$ has $\binom{m}{r}$ critical points.
\item $\forall I$ with $| I | =r$ we have 
\begin{align}
    \| A-A_I \|_F^2=\sum_{i \notin I} \sigma_i^2.
\end{align}
In particular $A_{\{1,\dots,r\}}$ is a global minimum of the restriction of $\widehat{\dist}_A$ to $\mathcal{M}_{\leq r}$ and 
\[
    \widehat{\dist}(A,\mathcal M_{\leq r}) = \| A - A_{\{1,\dots,r\}} \| = \sqrt{\sum_{i=r+1}^m \sigma_i^2}.
\]
\end{enumerate}
        \end{theorem}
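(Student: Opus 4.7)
The plan is to reduce everything to the diagonal case using orthogonal invariance of the Frobenius norm, and then to characterize the critical points via the normal space of $\mathcal{M}_r$ at a rank-$r$ matrix. Since $\|UXV^{\top}\|_F = \|X\|_F$ for $U \in O(n)$, $V \in O(m)$, and the set $\mathcal{M}_r$ is invariant under the action $X \mapsto UXV^{\top}$, the map $X \mapsto U^{\top}XV$ is an isometry sending the problem for $A = U\Sigma V^{\top}$ to the problem for $\Sigma$. Hence, after this reduction I may assume $A = \Sigma$ is diagonal with distinct positive entries $\sigma_1 > \dots > \sigma_m > 0$.

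For part (ii), once the reduction is done, the matrix $A - A_I$ corresponds (in the diagonal picture) to the diagonal matrix whose $(i,i)$ entry is $\sigma_i$ for $i \notin I$ and $0$ for $i \in I$. The Frobenius norm squared of this matrix is then simply $\sum_{i \notin I}\sigma_i^2$. Minimizing this over subsets $I$ with $|I|=r$ is purely combinatorial: since $\sigma_1 > \dots > \sigma_m > 0$, the sum of squares of the discarded $\sigma_i$ is smallest precisely when one discards the smallest $m-r$ of them, i.e., when $I = \{1,\dots,r\}$. The minimum is then $\sum_{i=r+1}^{m}\sigma_i^2$, and it is automatically also the global minimum on $\mathcal{M}_{\leq r}$ since every point of $\mathcal{M}_{\leq r}$ either already lies in $\mathcal{M}_r$ or can be approximated by points in $\mathcal{M}_r$, so the infima over $\mathcal{M}_r$ and $\mathcal{M}_{\leq r}$ agree.

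For part (i), the key step is to identify the tangent and normal spaces of $\mathcal{M}_r$ at a point $X \in \mathcal{M}_r$. Writing the reduced SVD $X = PDQ^{\top}$ with $P \in \R^{n\times r}$, $Q \in \R^{m\times r}$ having orthonormal columns and $D \in \R^{r\times r}$ diagonal, and completing to $[P\, P_{\perp}] \in O(n)$ and $[Q\, Q_{\perp}] \in O(m)$, one checks that
\[
T_X \mathcal{M}_r = \bigl\{ Y \in \R^{n\times m} : P_{\perp}^{\top} Y Q_{\perp} = 0 \bigr\},
\]
so that the normal space with respect to the Frobenius product is the set of matrices of the form $P_{\perp} M Q_{\perp}^{\top}$. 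The critical point equation $\nabla \widehat{\dist}_A^{\,2}(X) = 2(X-A) \in (T_X\mathcal{M}_r)^{\perp}$ is therefore equivalent to $P^{\top}(X-A) = 0$ and $(X-A)Q = 0$, which simplify to $P^{\top}A = DQ^{\top}$ and $AQ = PD$. In the diagonal case $A = \Sigma$ with distinct singular values, these equations force the columns of $P$ and $Q$ to be (signed) standard basis vectors whose indices $I \subset \{1,\dots,m\}$ match, and $D$ to be the corresponding diagonal block of $\Sigma$. This yields exactly the $\binom{m}{r}$ matrices $A_I$.

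The main obstacle I expect is the tangent/normal space computation for $\mathcal{M}_r$ at a rank-$r$ point. The cleanest way I would carry it out is to parametrize a neighborhood of $X$ in $\mathcal{M}_r$ by curves of the form $X(t) = (P + t P_{\perp} S)(D + tH)(Q + t Q_{\perp} T)^{\top}$ with arbitrary $S, T, H$, differentiate at $t=0$, and observe that the resulting tangent vectors span exactly the stated subspace of codimension $(n-r)(m-r) = \dim \R^{n\times m} - \dim \mathcal{M}_r$. Once this is in place, the rest of the proof is algebraic manipulation and the combinatorial comparison above.
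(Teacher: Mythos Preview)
The paper does not provide its own proof of this theorem: it is stated as a citation to \cite{EckartYoung} and used as a black box. So there is nothing to compare your argument against on the paper's side.

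That said, your outline is a correct and standard proof. The reduction to the diagonal case by orthogonal invariance is fine, the tangent/normal space description $T_X\mathcal{M}_r=\{Y:P_\perp^\top Y Q_\perp=0\}$ is the usual one, and from $P^\top\Sigma=DQ^\top$, $\Sigma Q=PD$ you can close the argument cleanly by noting that $\Sigma^\top\Sigma\, Q=QD^2$, so the columns of $Q$ are eigenvectors of the diagonal matrix $\Sigma^\top\Sigma$ with distinct eigenvalues, hence signed standard basis vectors; the same sign then propagates to the corresponding column of $P$, giving exactly $X=\Sigma_I$. The only small point to make explicit is that the signs on $p_j$ and $q_j$ coincide (from $\Sigma q_j=d_j p_j$), so that $p_jq_j^\top$ has the correct sign and $X$ really equals $\Sigma_I$ rather than a sign-flipped variant. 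Part (ii) and the passage from $\mathcal{M}_r$ to $\mathcal{M}_{\leq r}$ are fine as written.
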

    
    \begin{remark}
        A slightly modified version of \cref{EY} covers the case when the singular values of $A$ are not distinct. Notice also that, since a generic matrix $A \in \R^{n\times m}$ satisfies the hypothesis of the statement, it follows that the Euclidean Distance Degree of the algebraic variety $\mathcal M_{\leq r}$ is $\binom{m}{r}$.
    \end{remark}
    
    In \cref{Schubertsection} we will connect the rank approximation problem for matrices with the nearest point problem to a Schubert variety in the real Grassmannian. In particular, we will use the Eckart--Young Theorem to find some of the critical solutions and, among them, the optimal ones.

    \subsection{The Riemannian metric on the real Grassmannian}\label{grassmanngeometrysection}    
        We denote by $G(k,n)$ the real Grassmannian of $k$--planes in $\R^n$,
        \begin{align}
            G(k,n):=\{\mathbf E \ | \ \mathbf E \ \text{is a $k$--dimensional linear subspace of } \R^n\}.
        \end{align}
        From now on, we will always assume for simplicity that $k \leq n-k$. 
        
        In this section we introduce a natural Riemannian metric on real Grassmannians and study the corresponding induced Riemannian distance.
There at least two equivalent ways to do this: (1) viewing $G(k,n)$ as a Riemannian homogeneous space (this is the approach we take in this paper); (2) restricting the Fubini--Study metric to the image of the Pl\"ucker embedding. We refer the reader to \cite{Kozlov} and \cite[Section 4.8]{metric} for more details on comparing these two approaches. 
        
        \subsubsection{The Grassmannian as a homogeneous space}We introduce the following notation: for orthonormal vectors $v_1,\dots,v_k \in \R^n$,  we denote by $[v_1 \dots v_k]$ the $k$--plane spanned by $v_1,\dots,v_k$. The orthogonal group $O(n)$ acts transitively on $G(k,n)$ by rotations. Explicitly, if $R \in O(n)$ and $\mathbf E=[v_1 \dots v_k]$, the orthogonal matrix $R$ acts as $R \cdot \mathbf E = [Rv_1 \dots Rv_k]$. Given $\mathbf E_0=[e_1 \dots e_k]$, where $e_1,\dots,e_n$ is the standard basis of $\R^n$, it is easy to see that the isotropy subgroup of $\mathbf E_0$ is
        \begin{align}
        \mleft\{\mleft(\begin{array}{c|c}
        A_1 & 0 \\ \hline
        0 & A_2
        \end{array}\mright) \ \bigg| \ A_1 \in O(k), \ A_2 \in O(n-k)\mright\} \cong O(k) \times O(n-k).
        \end{align}
        By \cite[Theorem 21.20]{smoothmanifoldslee},  the Grassmannian $G(k,n)$ has a unique smooth manifold structure making the action of $O(n)$ smooth and it has dimension $\mathrm{dim}(G(k,n)) = \mathrm{dim}(O(n))-\mathrm{dim}(O(k)\times O(n-k))=k(n-k)$.  We denote by 
        \begin{align}
        p: O(n) &\longrightarrow G(k,n) \\
        R &\longmapsto R \cdot \mathbf E_0,
        \end{align}
       the quotient map, which projects a matrix $R$ to the span of its first $k$ columns. This map is a smooth submersion with fibers isomorphic to $O(k)\times O(n-k)$. As a homogeneous space,
        \begin{align}
        G(k,n) \cong \frac{O(n)}{O(k)\times O(n-k)}.
        \end{align}

          \subsubsection{Useful descriptions of tangent spaces to the Grassmannian}We introduce in this section useful isomorphisms between the tangent space $T_\mathbf{E}G(k,n)$, the space of matrices $\R^{(n-k)\times k}$ and $\mathrm{Hom}(\mathbf{E}, \mathbf{E}^\perp)$.
          
          To start with, we endow $O(n)$ with the restriction of the (rescaled) Frobenius metric: given $A,B \in T_{R}O(n)$ we define 
      \begin{align}
        \langle A,B \rangle_{\widetilde F} := \tfrac{1}{2}\mathrm{tr}(A^{\top}B).
        \end{align}
         This Riemannian metric on $O(n)$ is both left and right invariant (\cite[Proposition 4.45]{metric}). (The reason for the ``$\tfrac{1}{2}$'' scaling factor will be clear below.)
          
          For every $\mathbf E \in G(k,n)$, we denote by $\widehat{E} \in p^{-1}(\mathbf E)$ a fixed representative in $O(n)$. As $p$ is a submersion, the differential 
        \begin{align}
        D_{\widehat{E}}p : T_{\widehat{E}}O(n) \longrightarrow T_{\mathbf E}G(k,n)
        \end{align}        
        is surjective at every point and in particular it defines a linear isomorphism between the orthogonal complement of its kernel and $T_{\mathbf E}G(k,n)$. Recall that the tangent spaces to $O(n)$ are just translations $T_{\widehat{E}}O(n) = \widehat{E} \cdot \mathfrak{so}(n)$ and it is easy to see that 
        \begin{align}
        \mathrm{Ker}(D_{\widehat{E}}p)&=\mleft\{\widehat{E} \mleft(\begin{array}{c|c}
             M_1 & 0 \\ \hline
             0 & M_2
        \end{array}\mright) \ \bigg| \ M_1 \in \mathfrak{so}(k), \ M_2 \in \mathfrak{so}(n-k)\mright\} \cong T_{\widehat{E}}p^{-1}(\mathbf E), \\ \label{orthogonalkernel}
        (\mathrm{Ker}(D_{\widehat{E}}p))^{\perp}&=\mleft\{\widehat{E} \mleft(\begin{array}{c|c}
             0 & -A^{\top} \\ \hline
             A & 0
        \end{array}\mright) \ \bigg| \ A \in \R^{(n-k)\times k}\mright\}\simeq T_\mathbf{E}G(k,n).
        \end{align}
For every matrix $A \in \R^{(n-k)\times k}$ we introduce now the notation 
        \begin{align}
           \label{eq:MA} M_A := \mleft( \begin{array}{c|c}
                0 & -A^{\top} \\ \hline
                A & 0
            \end{array}\mright).
        \end{align}
        Then the map $A\mapsto\widehat{E} M_A$ gives a linear isomorphism between $\R^{(n-k)\times k}$ and   $(\mathrm{Ker}(D_{\widehat{E}}p))^{\perp}$. 
        One can easily check that this gives an isometry of Euclidean spaces
        \[\label{eq:isomAMA}\left(\R^{(n-k)\times k}, \langle \cdot, \cdot\rangle_{\mathrm{F}}\right)\simeq \left((\mathrm{Ker}(D_{\widehat{E}}p))^{\perp},\tfrac{1}{2}\langle \cdot, \cdot\rangle_{\mathrm{F}}\right). \]
        (Notice the ``$\tfrac{1}{2}$'' factor on the right hand side.)
        We use this isomoprhism to define the following isomorphism, which allows us to identify tangent vectors to the Grassmannian with matrices when needed.
        \begin{definition}[The isomorphism $A\mapsto v_A$]\label{def:vA}Let $\mathbf{E}\in G(k,n)$ and choose a representative $\widehat{E}\in p^{-1}(\mathbf{E})$. Then, we define the linear isomorphism
        \begin{align}\label{matrixiso}
            \R^{(n-k)\times k}  \stackrel{\simeq}{\longrightarrow} T_{\mathbf E}G(k,n), \quad A \mapsto v_A := D_{\widehat{E}}p (\widehat{E}M_A).
        \end{align}
                (We stress that this isomorphism depends on the fixed representative $\widehat{E}$.) 
        \end{definition}
        
        We can also establish a correspondence between $\mathrm{Ker}(D_{\widehat{E}}p)^{\perp}$ and the set of linear maps $\mathrm{Hom}(\mathbf E, \mathbf E^{\perp})$ as follows.
        \begin{definition}[The isomoprhism $A\mapsto \varphi_A$]\label{def:hom}Let $\mathbf{E}\in G(k,n)$ and choose a representative $\widehat{E}\in p^{-1}(\mathbf{E})$. Then, we define the linear isomorphism
        \[
           \R^{(n-k)\times k} \stackrel{\simeq}{\longrightarrow} \mathrm{Hom}(\mathbf{E}, \mathbf{E}^\perp)  , \quad A \mapsto \varphi_A  \]
defined by identifying a matrix $A$ with the linear map $\varphi_A: \mathbf E \longrightarrow \mathbf E^{\perp}$ represented by the matrix $A$ with respect to basis corresponding to $\widehat{E}$.  \end{definition}

        Summarizing,  for a fixed representative $\widehat{E}\in p^{-1}(\mathbf{E})$      and for any $A \in \R^{(n-k)\times k}$ we have isomorphic representations of the  tangent vectors to $G(k,n)$ at $\mathbf E$:
        \begin{align}\label{tangentequivalence}
          \mathrm{Hom}(\mathbf{E}, \mathbf{E}^\perp)\simeq \R^{(n-k)\times k}\simeq T_\mathbf{E}G(k,n), \quad \varphi_A \longleftrightarrow A \longleftrightarrow v_A.
        \end{align}
        In the folllowing we will use all these different representations, depending on the context. 
        \subsubsection{The orthogonally invariant Riemannian metric}We introduce now an orthogonally invariant Riemannian metric on $G(k,n)$. In fact, this metric is unique up to multiples.
        \begin{theorem}For every $k,n\in \mathbb{N}$ there exists a unique (up to multiples) Riemannian metric on $G(k,n)$ that is invariant under the action of the orthogonal group $O(n)$.
        \end{theorem}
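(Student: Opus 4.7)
The plan is to reduce the statement to a finite-dimensional representation-theoretic question via the standard dictionary for Riemannian homogeneous spaces, and then to invoke Schur's lemma after verifying absolute irreducibility of the isotropy representation.

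First I would fix the basepoint $\mathbf{E}_0 = [e_1 \dots e_k]$ with isotropy subgroup $H = O(k)\times O(n-k)\subset O(n)$. Any $O(n)$-invariant Riemannian metric on $G(k,n) = O(n)/H$ is determined by its value at $T_{\mathbf{E}_0}G(k,n)$, which must be preserved by the isotropy representation of $H$; conversely, any $H$-invariant inner product at $T_{\mathbf{E}_0}G(k,n)$ extends uniquely, by $O(n)$-translation, to a globally well-defined $O(n)$-invariant Riemannian metric on $G(k,n)$. So the problem reduces to showing that, up to positive scalar, there is a unique $H$-invariant positive-definite inner product on $T_{\mathbf{E}_0}G(k,n)$.

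Next, using the identification $\R^{(n-k)\times k}\stackrel{\sim}{\to} T_{\mathbf{E}_0}G(k,n)$ from \cref{def:vA} with representative $\widehat{E}_0 = I_n$, I would make the isotropy representation completely explicit. For $R = \mathrm{diag}(A_1, A_2)\in H$, a direct block-matrix computation gives $R\,M_A\,R^\top = M_{A_2 A A_1^\top}$, so the isotropy representation on $\R^{(n-k)\times k}$ is $(A_1,A_2)\cdot A = A_2 A A_1^\top$, i.e.\ the outer tensor product of the standard representations of $O(n-k)$ and $O(k)$. Existence of at least one such metric follows immediately: the restriction of the rescaled Frobenius form on $\mathfrak{so}(n)$ to $(\mathrm{Ker}\,D_{\widehat{E}_0}p)^\perp$ is positive-definite and $H$-invariant by the cyclic and orthogonal invariance of the trace.

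For uniqueness, the crucial step is to verify absolute irreducibility over $\R$ of the isotropy representation. The defining representation of $O(m)$ on $\R^m$ is absolutely irreducible for every $m\geq 1$, and the outer tensor product of two absolutely irreducible representations remains absolutely irreducible. By the real version of Schur's lemma, the space of $H$-invariant symmetric bilinear forms on $T_{\mathbf{E}_0}G(k,n)$ is then one-dimensional, so the positive-definite ones form a single ray, proving uniqueness up to positive scalar. The main obstacle is precisely the absolute irreducibility step: over $\R$, ordinary irreducibility is not enough for Schur's lemma to pin down a one-dimensional space of invariant bilinear forms, since the endomorphism algebra could a priori be $\C$ or $\mathbb{H}$. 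I would therefore argue absolute irreducibility by complexifying and using the classical fact that $O(m,\C)$ acts irreducibly on $\C^m$ for $m\geq 1$, then descending to conclude that the real endomorphism algebra is exactly $\R$.
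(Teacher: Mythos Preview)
Your argument is correct: reducing to $H$-invariant inner products at the basepoint, identifying the isotropy representation explicitly as $(A_1,A_2)\cdot A = A_2 A A_1^\top$ on $\R^{(n-k)\times k}$, and then invoking absolute irreducibility together with the real Schur lemma is the standard and complete route to this statement. The paper, however, does not actually give a proof here---it simply cites \cite[Corollary 5.34]{metric} and \cite[Remark 5.35]{metric}. So rather than comparing two arguments, you have supplied a self-contained proof where the paper defers to an external reference. The cited corollary is almost certainly the general statement that an isotropy-irreducible homogeneous space carries a unique invariant metric up to scale, proved by exactly the mechanism you describe; your version has the added value of making the isotropy representation and the irreducibility check fully explicit in the Grassmannian case, and of flagging the real-versus-complex Schur subtlety (which is indeed what underlies the $SO(n)$ exception at $(k,n)=(2,4)$ mentioned in the paper's subsequent remark).
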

        \begin{proof} See \cite[Corollary 5.34]{metric} and \cite[Remark 5.35]{metric}.\end{proof}
        \begin{remark}If we only require that the Riemannian metric on $G(k,n)$ is invariant under the action of $SO(n)$, then this is still unique (up to multiples), except for the case $(k,n)=(2,4)$. In this case there is a one dimensional family of non--equivalent invariant metrics, see \cite[Example 5.20 and Corollary 5.34]{metric}.
        \end{remark}
         The desired Riemannian metric is defined as follows.         
         \begin{definition}[The Grassmann metric]\label{def:riemmetric}We define the Riemannian metric on $G(k,n)$ as the unique one such that $p:O(n)\to G(k,n)$ becomes a Riemannian submersion, i.e. such that for every $\mathbf{E}\in G(k,n)$ the map
        \begin{align}
            D_{\widehat{E}}p: (\mathrm{Ker}(D_{\widehat{E}}p))^{\perp} \longrightarrow T_{\mathbf E}G(k,n)
        \end{align}
        is a linear isometry (this definition is well posed, see \cite[Proposition 4.50]{metric}). We will refer to this metric as the \emph{Grassmann metric}.
        \end{definition}
        More explicitly, choose a representative $\widehat{E}\in p^{-1}(\mathbf{E})$, and denote by $A_1, A_2\in \R^{(n-k)\times k}$ matrices representing the vectors $v_{1}, v_{2}\in T_{\mathbf{E}}G(k,n)$ under the isomorphism from \cref{def:vA}. Then, using \eqref{eq:isomAMA}, one easily sees that the Riemannian structure form the previous definition is the scalar product defined on $T_\mathbf{E}G(k,n)$ by
        \[\label{eq:gE}g_{\mathbf{E}}(v_1, v_2):=\langle A_1, A_2\rangle_{\mathrm{F}}.\]
       
       \subsubsection{Geodesics and distance function} 
Now we recall the  description of Riemannian geodesics on $G(k,n)$ with respect to the Grassmann metric we just introduced. Observe that the projection map $p:O(n) \longrightarrow G(k,n)$  does not preserve geodesics in general. Nevertheless, if a geodesic $\gamma(t)$ on $O(n)$ is horizontal, which means that at each point $\dot \gamma(t) \in (\mathrm{Ker}(D_{\gamma(t)}p))^{\perp}$, then the projected curve $p(\gamma(t))$ is a geodesic in $G(k,n)$. Viceversa, every geodesic in $G(k,n)$ admits a horizontal lift. Therefore we can exploit the knowledge of the geodesics on $O(n)$ with respect to the Frobenius metric to compute the exponential map on $G(k,n)$. 
        
        To this end recall that the Riemannian exponential map of $O(n)$ at the identity coincides with  the usual matrix exponential (\cite[Exercise 3, Chapter 3]{doCarmo}), defined for any square matrix $X$ as 
        \begin{align}
            e^X:=\sum_{i=0}^{+\infty}\frac{X^i}{i!}.
        \end{align}
        Given $\mathbf E \in G(k,n)$, the Riemannian exponential map of $G(k,n)$ at $\mathbf E$ is 
        \begin{align}
            \exp_{\mathbf E}:T_{\mathbf E}G(k,n) &\longrightarrow G(k,n) \\
            v &\longmapsto \gamma_v(1),
        \end{align}
        where $\gamma_v$ is the geodesic starting at $\mathbf E$ with velocity $v$. We want to give now an explicit description of this map.
        
        For any $R \in O(n)$ we denote by $\exp_R^{O(n)}$ the Riemannian exponential map of $O(n)$ at the point $R$. Let us fix a representative $\widehat{E}\in p^{-1}(\mathbf{E})$ and let us identify the tangent space $T_\mathbf{E}G(k,n)$ with $\R^{(n-k)\times k}$ as in \cref{def:vA}. Then we have 
        \begin{align}
            \exp_{\mathbf E}(v_A)=& \exp_{\mathbf E}\big(D_{\widehat{E}}p (\widehat{E}M_A)\big) = p\big(\exp_{\widehat{E}}^{O(n)}(\widehat{E}M_A)\big) = \\ &p\big(\widehat{E} \exp_{\mathds 1}^{O(n)}(M_A)\big)=\widehat{E}e^{M_A}\cdot \mathbf E_0.
        \end{align}
        Let now $A=U\Sigma V^{\top}$ be a SVD of $A$ with $\Sigma=\mathrm{diag}(\mu_1,\dots,\mu_k)\in \R^{(n-k)\times k}$. Then we have 
        \begin{align}\label{M_ASVD}
            M_A=\mleft(\begin{array}{c|c}
                V & 0 \\ \hline
                0 & U
            \end{array}\mright)\mleft(\begin{array}{c|c}
                0 & -\Sigma^{\top} \\ \hline
                \Sigma & 0
            \end{array}\mright)\mleft(\begin{array}{c|c}
                V & 0 \\ \hline
                0 & U
            \end{array}\mright)^{\top} =: R_A M_{\Sigma}{R_A}^{\top}.
        \end{align}
        Remark that if $R \in O(n)$, then for every $B \in \R^{n\times n}$ we have $e^{RBR^{\top}}=Re^BR^{\top}$. From \eqref{M_ASVD} it follows
        \begin{align}
            e^{M_A}=R_Ae^{M_{\Sigma}}{R_A}^{\top}.
        \end{align}
        By a straightforward computation, using the Taylor expansions of the sine and cosine functions, we see that 
        \begin{align}\label{eq:lift}
            e^{M_{\Sigma}}=\mleft(\begin{array}{ccc|ccc|c}
                 \cos(\mu_1) & & & -\sin(\mu_1) & & &  \\
                 & \ddots & & & \ddots & & 0 \\
                 & &\cos(\mu_k) & & &-\sin(\mu_k) &  \\ \hline 
                 \sin(\mu_1) & & & \cos(\mu_1) & & &  \\
                 & \ddots & & & \ddots & & 0 \\
                 & & \sin(\mu_k) & & & \cos(\mu_k) &  \\ \hline 
                  & 0 & & & 0 & & 0
            \end{array}\mright).
        \end{align}
        From our computation we get the following description of geodesics  and the Riemannian exponential map in $G(k,n)$. 
        \begin{proposition}\label{geodesicprop}
            Let $\mathbf E \in G(k,n)$ and choose a representative $\widehat{E}\in p^{-1}(\mathbf{E})$. Let also $v_A \in T_{\mathbf E}G(k,n)\simeq \R^{(n-k)\times k}$ be the tangent vector given by the isomorphism \cref{def:vA}. Let $A=U\Sigma V^{\top}$ be a SVD of $A$, with $U=(u_1 \dots u_{n-k})$, $V=(v_1 \dots v_k)$ and $\Sigma=\mathrm{diag}(\mu_1,\dots,\mu_k)\in \R^{(n-k)\times k}$. Denote by $\gamma_A:(-\infty,+\infty) \longrightarrow G(k,n)$ the geodesic such that $\gamma_A(0)=\mathbf E$ and $\dot \gamma_A(0)=v_A$. Then 
            \begin{align}\label{explicitgeodesic}
                \gamma_A(t)=\left[\widehat{E} \mleft(\begin{array}{ccc}
                    \cos(t\mu_1)v_1 & \dots & \cos(t\mu_k)v_k \\ 
                    \sin(t\mu_1)u_1 & \dots & \sin(t\mu_k)u_k
                \end{array}\mright)\right].
            \end{align}
            In particular,
            \[\mathrm{exp}_\mathbf{E}(v_A)=\left[\widehat{E} \mleft(\begin{array}{ccc}
                    \cos(\mu_1)v_1 & \dots & \cos(\mu_k)v_k \\ 
                    \sin(\mu_1)u_1 & \dots & \sin(\mu_k)u_k
                \end{array}\mright)\right].\]
        \end{proposition}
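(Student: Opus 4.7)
The strategy is essentially to assemble the formulas already derived in the paragraphs preceding the statement and then carry out one explicit block computation. Concretely, since $\gamma_A(t)=\exp_{\mathbf{E}}(tv_A)$ and $tv_A=v_{tA}$, I can apply the previous displayed formula $\exp_{\mathbf{E}}(v_A)=\widehat{E}\,e^{M_A}\cdot\mathbf{E}_0$ with $A$ replaced by $tA$. The SVD $A=U\Sigma V^\top$ gives $tA=U(t\Sigma)V^\top$, hence $M_{tA}=R_A\,M_{t\Sigma}\,R_A^\top$ with the same $R_A$ as in \eqref{M_ASVD}, and therefore $e^{tM_A}=R_A\,e^{tM_\Sigma}\,R_A^\top$, where $e^{tM_\Sigma}$ is the matrix in \eqref{eq:lift} with each $\mu_i$ replaced by $t\mu_i$.

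Next I would unfold the projection $p(R)=[Re_1\,\cdots\,Re_k]$ by computing the first $k$ columns of $\widehat{E}\,R_A\,e^{tM_\Sigma}\,R_A^\top$. The key simplification is that the span defining a point of $G(k,n)$ is invariant under right multiplication by any invertible $k\times k$ matrix acting on the chosen basis. Since
\[
R_A^\top\begin{pmatrix} I_k \\ 0 \end{pmatrix}=\begin{pmatrix} V^\top \\ 0 \end{pmatrix},
\]
right-multiplying by $V$ (which does not alter the span) reduces the problem to computing the columns of
\[
\widehat{E}\,R_A\,e^{tM_\Sigma}\begin{pmatrix} I_k \\ 0 \end{pmatrix}.
\]

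Then it is a direct block calculation: the first $k$ columns of $e^{tM_\Sigma}$ read off from \eqref{eq:lift} give a $k\times k$ diagonal block $\mathrm{diag}(\cos t\mu_i)$ stacked above $\mathrm{diag}(\sin t\mu_i)$ and a zero tail of length $n-2k$; applying $R_A=\mathrm{diag}(V,U)$ on the left turns the $j$-th column into $(\cos(t\mu_j)v_j,\,\sin(t\mu_j)u_j)^\top$, where $v_j,u_j$ are the columns of $V,U$. Prepending the factor $\widehat{E}$ and taking the span yields exactly the expression \eqref{explicitgeodesic}, and setting $t=1$ gives the formula for $\exp_{\mathbf{E}}(v_A)$.

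The verification that this curve is actually the geodesic, rather than just some curve with the right endpoint, is already handled by the discussion preceding the statement: $p$ is a Riemannian submersion, horizontal geodesics in $O(n)$ project to geodesics in $G(k,n)$, and the horizontal lift of $\gamma_A$ at $\widehat{E}$ is $t\mapsto\widehat{E}\,e^{tM_A}$ (whose initial velocity is $\widehat{E}M_A\in(\mathrm{Ker}\,D_{\widehat{E}}p)^\perp$, corresponding by \cref{def:vA} to $v_A$). There is no real obstacle beyond bookkeeping; the one place where care is needed is recognizing that the span is unchanged when one right-multiplies the first $k$ columns by the orthogonal matrix $V$, which is what turns the otherwise messy product $R_A\,e^{tM_\Sigma}\,R_A^\top$ applied to $\binom{I_k}{0}$ into the clean form in the statement.
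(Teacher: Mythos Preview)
Your proof is correct and takes essentially the same approach as the paper's own proof, which is a one-sentence reference back to the preceding computations. You simply fill in the bookkeeping that the paper leaves implicit: the explicit passage from $\widehat{E}\,e^{M_A}\cdot\mathbf{E}_0=\widehat{E}\,R_A\,e^{M_\Sigma}\,R_A^\top\cdot\mathbf{E}_0$ to the column form in \eqref{explicitgeodesic}, via the observation that right-multiplying the spanning columns by the orthogonal matrix $V$ does not change the span.
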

      \begin{proof}As we already observed, geodesic in $G(k,n)$ coincides with projections of horizontal geodesics under $p:O(n)\to G(k,n)$, i.e. of curves of the form \eqref{eq:lift}.
      \end{proof}
                
        \subsubsection{Principal angles}\label{sec:angles}In this section we recall the notion of \emph{principal vectors} and \emph{principal angles}, following the algorithmic definition from \cite{Lim}. 

\begin{definition}[Principal vectors and principal angles]\label{def:principal}Let $\mathbf E_1$, $\mathbf E_2 \in G(k,n)$. The principal vectors $(p_1,q_1),\dots,(p_k,q_k)\in \mathbf{E}_1\times \mathbf{E}_2$ and the principal angles $0\leq \theta_1\leq \cdots \leq \theta_k\leq \frac{\pi}{2}$ between $\mathbf{E_1}$ and $\mathbf{E}_2$ are defined by the following recursive algorithm. The couple $(p_1, q_1)$ is defined as a solution to the optimization problem: 
\begin{align}
    \text{maximize}& \ \ \langle p,q \rangle \\
    \text{subject to}& \ \ p \in \textbf E_1, \ \norm{p}=1 \\
    & \ \  q \in \textbf E_2, \ \norm{q}=1,
\end{align}
where $\langle \cdot, \cdot\rangle$ is the Euclidean scalar product of $\R^n$. 
Inductively, for $i=2, \ldots, k$, the couple $(p_i,q_i)$ is defined as a solution to the optimization problem
\begin{align}
    \text{maximize}& \ \ \langle p,q \rangle \\
    \text{subject to}& \ \ p \in \textbf E_1, \ p \perp \{p_1, \dots, p_{i-1}\}, \ \norm{p}=1 \\
    & \ \  q \in \textbf E_2, \ q\perp\{q_1,\dots,q_{i-1}\}, \ \norm{q}=1,
\end{align}
The obtained couples $(p_1,q_1),\dots,(p_k,q_k)$ are called \emph{principal vectors} between $\mathbf E_1$ and $\mathbf E_2$. Recall that maximizing the scalar product between unit vectors is the same as minimizing the angle between them. It follows that principal vectors provide orthonormal basis for $\mathbf E_1$ and $\mathbf E_2$ such that the angles between corresponding vectors are the least possible. These angles are denoted by
\begin{align}
    \theta_i := \arccos(\langle p_i,q_i\rangle), \quad i=1, \ldots, k,
\end{align}
are called \emph{the principal angles} between $\mathbf E_1$ and $\mathbf E_2$. 
\end{definition}

It is clear that the principal angles satisfy $0\leq \theta_1\leq\dots\leq\theta_k\leq\frac{\pi}{2}$. Moreover we can observe the following properties: \begin{itemize}[label={--}]
    \item if there are exactly $\ell$ principal angles that are $0$, i.e. $\theta_1=\dots=\theta_\ell=0$ but $\theta_{\ell+1}\neq 0$, then $\mathrm{dim}(\mathbf E_1 \cap \mathbf E_2)=\ell$; 
    \item if there are exactly $\ell$ principal angles that are $\frac{\pi}{2}$, i.e. $\theta_{k-\ell+1}=\dots=\theta_k=\frac{\pi}{2}$ but $\theta_{k-\ell}\neq \frac{\pi}{2}$, then $\mathrm{dim}(\mathbf E_1 \cap \mathbf E_2^{\perp})=\ell$; 
    \item the planes $\pi_i=\mathrm{span}\{p_i,q_i\}$ are pairwise orthogonal for $i=1,\dots,k$ (in case $p_i=q_i$ the plane $\pi_i$ is just a line). 
\end{itemize} 
To actually compute principal vectors and principal angles between two $k$--planes, we use again the singular values decomposition. Assume that $\mathbf E_1 = [a_1 \dots a_k]$ and $\mathbf E_2 = [b_1 \dots b_k]$ and let $A,B \in \R^{n\times k}$ be the matrices with columns $a_i$'s and $b_i$'s. Consider a SVD of the product 
\begin{align}\label{anglesSVD}
    A^{\top}B=U\Sigma V^{\top}
\end{align}
with $U,V \in O(k)$ and $\Sigma=\mathrm{diag}(\sigma_1,\dots,\sigma_k)\in \R^{k\times k}$ with singular values ordered as $\sigma_1 \geq \dots \geq \sigma_k \geq 0$. Then \cite{Bjorck}
\begin{itemize}[label={--}]
    \item $AU=(p_1 \dots p_k)$ contains as columns the principal vectors for $\mathbf E_1$; 
    \item $BV=(q_1 \dots q_k)$ contains as columns the principal vectors for $\mathbf E_2$;
    \item $\sigma_i = \cos(\theta_i)$ for $i=1,\dots,k$, where $0\leq \theta_1\leq\dots\leq\theta_k\leq \frac{\pi}{2}$ are the principal angles between $\mathbf E_1$ and $\mathbf E_2$. 
\end{itemize}
Therefore given two $k$-planes $\mathbf E_1$ and $\mathbf E_2 \in G(k,n)$, to compute the principal vectors and angles between them it is enough to pick two specific orthonormal basis, stack them into matrices representing the planes, take the product between them as in \eqref{anglesSVD} and run an SVD to find the changes of basis needed to get principal vectors and to find the cosines of the principal angles. 
\subsubsection{Length minimizing geodesics and the Grassmann distance}Using principal vectors and angles, it is also easy to describe the length minimizing geodesic connecting two points in $G(k,n)$ and the function giving the distance between them.

To start with, we give an answer to the following question: given $\mathbf E_1$, $\mathbf E_2 \in G(k,n)$, how can we compute a tangent vector $v_A \in T_{\mathbf E_1}G(k,n)$ such that $\exp_{\mathbf E_1}(v_A)=\mathbf E_2$? 

This is done as follows. Given the  two $k$--planes $\mathbf{E}_1=[a_1\cdots a_k]$ and $\mathbf{E}_2=[b_1\cdots b_k]$, fix a representative $\widehat{E}_1=(a_1,\dots,a_k,w_1,\dots,w_{n-k}) \in O(n)$ for $\mathbf E_1$, and take a decomposition as in \eqref{anglesSVD}. Define $\Theta:=\mathrm{diag}(\theta_1,\dots,\theta_k) \in \R^{(n-k)\times k}$, the diagonal matrix containing the principal angles  between $\mathbf{E}_1$ and $\mathbf{E}_2$ on the diagonal. Recall that the matrix  $U$ from \eqref{anglesSVD} gives the change of basis that one needs to apply to the orthonormal basis $\{a_1, \dots, a_k\}$ of $\mathbf E_1$ to obtain the principal vectors $\{p_1,\dots,p_k\}$. Assume that $\theta_1=\dots=\theta_\ell=0$, so that $p_1=q_1,\dots,p_\ell=q_\ell$  and $\mathbf E_1 \cap \mathbf E_2$ has basis $\{p_1,\dots,p_\ell\}$. For every $i=\ell+1,\dots,k$ define $n_i \in \pi_i=\mathrm{span}\{p_i,q_i\}$ such that $\norm{n_i}=1$, $n_i \perp p_i$ and $\langle n_i,q_i\rangle >0$. Then by definition of principal angles we have 
\begin{align}\label{rotationvector}
    q_i = p_i\cos(\theta_i) + n_i \sin(\theta_i).
\end{align}
Moreover, by the orthogonality relations for principal vectors, we also have $n_i \in \mathbf E_1^{\perp}$ for every $i=\ell+1,\dots,k$. Complete the set $\{n_{\ell+1},\dots,n_{k}\}$ to an orthonormal basis $\{n_1,\dots,n_{n-k}\}$ of $\mathbf E_1^{\perp}$  and define $S \in O(n-k)$ giving the change of basis on $\mathbf E_1^{\perp}$ from $\{w_1,\dots,w_{n-k}\}$ to $\{n_1,\dots,n_{n-k}\}$. We define 
\begin{align}\label{explicitmatrix}
    A:=S\Theta U^{\top}.
\end{align}
Using \eqref{explicitgeodesic} and \eqref{rotationvector}, we can see that 
\begin{align}\label{explicitmatrixexp}
    \exp_{\mathbf E_1}(v_A)=\mathbf E_2.
\end{align}
Geometrically, the geodesic $\gamma_A(t):=\exp_{\mathbf E_1}(tv_A)$ just consists of simultaneous rotations from the vectors $p_i \in \mathbf E_1$ to the corresponding vectors $q_i \in \mathbf E_2$, performed inside the planes $\pi_i=\mathrm{span}\{p_i,q_i\}$ for $i=\ell+1,\dots,k$ (nothing is done for $p_1,\dots,p_\ell$ which are in $\mathbf E_1 \cap \mathbf E_2$) with speed given by the angles $\theta_i$. Remark that, if in the definition of $\Theta$ we replace an angle $\theta_i$ with $\theta_i+m\pi$ for any $m \in \Z$, the equation \eqref{explicitmatrixexp} still holds true, but the corresponding geodesic will have greater length for any $m \neq 0$. 

One can show that the geodesic $\gamma_A(t)$ constructed above provides the length minimizing geodesic between $\mathbf E_1$ and $\mathbf E_2$. 
It follows that the Riemannian distance between $\mathbf E_1$ and $\mathbf E_2$ is given by the Frobenius norm of $A$ and this can be expressed in terms of the principal angles between them.
\begin{definition}[Grassmann distance]\label{def:Gdist}We denote by $\dist:G(k,n)\times G(k,n)\to \R$ the Riemannian distance function induced by the Grassmann metric (\cref{def:riemmetric}) and we call this function \emph{the Grassmann distance}. More explicitely, if $\theta_1, \ldots, \theta_k$ are the principal angles between the two $k$--planes $\mathbf E_1$ and $\mathbf E_2$, then
\begin{align}\label{anglesdistance}
    \dist(\mathbf E_1,\mathbf E_2)=\bigg(\sum_{i=1}^k \theta_i^2\bigg)^{\frac{1}{2}}.
\end{align}
\end{definition}

Let us introduce now the notation
\begin{align}\label{Rpi2}
    R_{\frac{\pi}{2}}=\left\{A \in \R^{(n-k)\times k} \ | \ \text{the singular values of $A$ are bounded by $\frac{\pi}{2}$}\right\} \subseteq \R^{(n-k)\times k}
\end{align}
for the set of matrices with bounded singular values. Identifying $R_{\frac{\pi}{2}}$ with a subset of $T_{\mathbf{E}}G(k,n)$ using \cref{def:vA}, we can summarize the above discussion in the following result. 
\begin{proposition}\label{boundedprop}
    The image of $R_{\frac{\pi}{2}}$ under $\exp_{\mathbf E}:T_{\mathbf E}G(k,n) \longrightarrow G(k,n)$ is the whole Grassmannian. Moreover, for every $A \in R_{\frac{\pi}{2}}$, the principal angles between $\mathbf E$ and $\exp_{\mathbf E}(v_A)$ coincide with the singular values of $A$. 
\end{proposition}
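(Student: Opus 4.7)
The proposition has two claims: surjectivity of $\exp_\mathbf{E}$ restricted to $R_{\pi/2}$, and the identification of singular values with principal angles. Both claims essentially follow by unpacking the explicit formulas from \cref{geodesicprop} and the computation of principal angles via \eqref{anglesSVD}, so the proof is a direct verification; the main task is bookkeeping rather than ideas.

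\textbf{Surjectivity.} Given any $\mathbf{L}\in G(k,n)$, the discussion preceding \eqref{explicitmatrix} constructs, starting from a representative $\widehat{E}\in p^{-1}(\mathbf{E})$ and from the principal vectors and angles between $\mathbf{E}$ and $\mathbf{L}$, a matrix $A=S\Theta U^\top\in\R^{(n-k)\times k}$ for which \eqref{explicitmatrixexp} gives $\exp_\mathbf{E}(v_A)=\mathbf{L}$. By construction, $A=S\Theta U^\top$ is (after possibly reordering the diagonal of $\Theta$) a singular value decomposition of $A$, since $S\in O(n-k)$, $U\in O(k)$ and $\Theta\in\R^{(n-k)\times k}$ is diagonal with nonnegative entries equal to the principal angles $\theta_1,\dots,\theta_k\in[0,\pi/2]$. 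Hence the singular values of $A$ are bounded by $\pi/2$, so $A\in R_{\pi/2}$ and surjectivity is proved. This step also already identifies the singular values of the particular $A$ we constructed with the principal angles between $\mathbf{E}$ and $\exp_\mathbf{E}(v_A)$.

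\textbf{Principal angles of a general $A\in R_{\pi/2}$.} Conversely, fix any $A\in R_{\pi/2}$ with SVD $A=U\Sigma V^\top$, $\Sigma=\mathrm{diag}(\mu_1,\dots,\mu_k)$, $0\le\mu_i\le\pi/2$. Write $\mathbf{L}:=\exp_\mathbf{E}(v_A)$. Using \cref{geodesicprop}, the columns of
\[
A_\mathbf{E}=\widehat{E}\begin{pmatrix} I_k \\ 0 \end{pmatrix},\qquad
A_\mathbf{L}=\widehat{E}\begin{pmatrix} V\,\mathrm{diag}(\cos\mu_i) \\ U_k\,\mathrm{diag}(\sin\mu_i) \end{pmatrix}
\]
(where $U_k=(u_1|\cdots|u_k)\in\R^{(n-k)\times k}$) form orthonormal bases of $\mathbf{E}$ and $\mathbf{L}$ respectively; orthonormality of the columns of $A_\mathbf{L}$ follows from $V^\top V=I_k$ and $U_k^\top U_k=I_k$ together with $\cos^2\mu_i+\sin^2\mu_i=1$. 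A direct computation using $\widehat{E}^\top\widehat{E}=I_n$ gives
\[
A_\mathbf{E}^\top A_\mathbf{L}=V\,\mathrm{diag}(\cos\mu_1,\dots,\cos\mu_k).
\]
The singular values of this product are therefore $\cos\mu_1,\dots,\cos\mu_k$. By the recipe \eqref{anglesSVD} recalled in \cref{sec:angles}, these are exactly the cosines of the principal angles between $\mathbf{E}$ and $\mathbf{L}$. Since cosine is a strictly decreasing bijection $[0,\pi/2]\to[0,1]$, the multisets $\{\mu_1,\dots,\mu_k\}$ and $\{\theta_1,\dots,\theta_k\}$ must coincide, which is the second claim.

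\textbf{Main obstacle.} There is no genuine difficulty: the content is really just compatibility between the two explicit formulas, \eqref{explicitgeodesic} for the exponential map and \eqref{anglesSVD} for the principal angles. The only place where one needs to be slightly careful is making sure that the different possible orderings and the edge cases $\mu_i=0$ or $\mu_i=\pi/2$ do not cause trouble, which is why the proof formulates the identification as an equality of multisets and uses the strict monotonicity of $\cos$ on $[0,\pi/2]$.
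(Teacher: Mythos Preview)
Your proof is correct and follows the same approach as the paper, which presents the proposition as a summary of the preceding discussion without an explicit proof. Your argument for the second claim---computing $A_{\mathbf E}^{\top}A_{\mathbf L}=V\,\mathrm{diag}(\cos\mu_1,\dots,\cos\mu_k)$ directly from the formula in \cref{geodesicprop} and then invoking \eqref{anglesSVD}---is more explicit than what the paper spells out, and is a clean way to close the loop.
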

\begin{remark}The content of the previous proposition can be visualized using the following commutative diagram that, for a fixed $\mathbf{E}$, relates the Frobenius distance on the tangent space and the Grassmann distance:
$$
\begin{tikzcd}
R_{\frac{\pi}{2}} \arrow[rr, "\exp_{\mathbf{E}}"] \arrow[rd, "\|\cdot\|_{\mathrm{F}}"'] &    & {G(k,n)} \arrow[ld, "{\dist(\mathbf{E}, \cdot)}"] \\
                                                                               & \R &                                                  \end{tikzcd},\quad \dist(\mathbf{E}, \mathrm{exp}_\mathbf{E}(v_A))=g_\mathbf{E}(v_A, v_A)^{\frac{1}{2}}=\|A\|_{\mathrm{F}}.$$
                                                                               \end{remark}
    
    
\subsection{O--minimality of principal angles}\label{ominimalsection}The Grassmannian $G(k,n)$ can be embedded in $\R\mathrm{P}^N$, with $N={n\choose k}-1$, via the Pl\"ucker embedding. The image of this embedding is a real algebraic set (in particular semialgebraic). As we noted above, the principal angles \emph{are not} semialgebraic functions (we will show in \cref{sec:nonsemi} that the whole nearest point problem to an algebraic set, in general, is not semialgebraic). However, we show in this section that they are definable in the o--minimal structure of globally subanalytic sets.
\begin{proposition}[o--minimality of the Grassmann distance]\label{ominimalitylemma}
    The principal angles are globally subanalaytic and so is the Grassmann distance function. 
\end{proposition}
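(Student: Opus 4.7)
The plan is to write each principal angle as the composition of a semialgebraic function with $\arccos$ restricted to $[-1,1]$, and then to invoke the fact that the o--minimal structure $\mathbb{R}_{\mathrm{an}}$ of globally subanalytic sets contains every restricted real--analytic function.

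First, I would recall that the singular values of a matrix are semialgebraic functions of its entries: the squares $\sigma_i^2$ are the eigenvalues of $M^\top M$, hence they are roots of a polynomial with coefficients polynomial in the entries of $M$, and can be isolated by quantifier--free semialgebraic conditions (ordering the roots). Combined with the SVD characterization recalled at~\eqref{anglesSVD}, this shows that, for orthonormal bases $A,B\in\mathbb{R}^{n\times k}$ of planes $\mathbf{E}_1,\mathbf{E}_2$, the cosines $\cos\theta_i(\mathbf{E}_1,\mathbf{E}_2)=\sigma_i(A^\top B)$ are semialgebraic functions of $(A,B)$ taking values in $[0,1]$. Since the Stiefel variety $V_k(\mathbb{R}^n)$ is a real algebraic set and the quotient map $V_k(\mathbb{R}^n)\to G(k,n)$ is a semialgebraic submersion with semialgebraic local sections (one may equivalently argue via the Plücker embedding), these cosines descend to semialgebraic functions on $G(k,n)\times G(k,n)$.

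Next I would apply $\arccos$. Because $\arccos\colon[-1,1]\to[0,\pi]$ is a restricted analytic function, it is by definition definable in the o--minimal structure $\mathbb{R}_{\mathrm{an}}$ of globally subanalytic sets. The composition of a function definable in $\mathbb{R}_{\mathrm{an}}$ with a semialgebraic one is again definable in $\mathbb{R}_{\mathrm{an}}$, so each principal angle $\theta_i=\arccos\circ\,\sigma_i(A^\top B)$ is globally subanalytic. Applying the semialgebraic map $(x_1,\dots,x_k)\mapsto(x_1^2+\cdots+x_k^2)^{1/2}$ then shows that $\dist(\mathbf{E}_1,\mathbf{E}_2)=\bigl(\sum_i\theta_i^2\bigr)^{1/2}$ is globally subanalytic as well.

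There is no serious obstacle: the only delicate point is ensuring that the descent from the Stiefel level (where the SVD argument makes sense) to $G(k,n)\times G(k,n)$ is genuinely carried out inside $\mathbb{R}_{\mathrm{an}}$. That is handled by the semialgebraicity of the Plücker embedding and of the orbit map, which allow semialgebraic trivializations of the principal $O(k)$--bundle $V_k(\mathbb{R}^n)\to G(k,n)$; the cosines, being invariant under the right $O(k)\times O(k)$ action on pairs of frames, then descend to semialgebraic functions on $G(k,n)\times G(k,n)$, and the rest of the argument proceeds as above.
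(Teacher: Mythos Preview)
Your proof is correct and follows essentially the same route as the paper: both arguments show that the cosines of the principal angles are semialgebraic (via the SVD/eigenvalue description) and then pass to the angles using that $\arccos$ restricted to $[-1,1]$ (equivalently, $\cos$ restricted to $[0,\tfrac{\pi}{2}]$) is definable in $\mathbb{R}_{\mathrm{an}}$. The only cosmetic difference is that the paper writes the graph of $\theta_i$ directly with an existential quantifier over orthonormal frames and the relation $\cos t=\sigma_i$, whereas you phrase the same step as a descent from the Stiefel manifold followed by composition with $\arccos$.
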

\begin{proof}
    By definition of globally subanalytic function, it is enough to prove that $\mathrm{graph}(\theta_i(\mathbf L,\cdot))$ is globally subanalytic for every $\mathbf L \in G(k,n)$, every $i=1,\dots,k$ and every $1\leq k\leq n$. By the orthogonal invariance of principal angles, we can fix a specific $k$--plane $\mathbf L_0 \in G(k,n)$ and assume without loss of generality that $\mathbf L=\mathbf L_0$. 
    
    Given a set of $k$ vectors $a_1,\dots,a_k \in \R^n$, denote by $A$ the matrix having those vectors as columns
    \[A=(a_1,\dots,a_k)=\begin{pmatrix}
        A_1 \\ A_2
    \end{pmatrix},\]
    where $A_1$ is a $k\times k$ matrix and $A_2$ is a $(n-k)\times k$ matrix. Then the graph of $\theta_i(\mathbf L_0, \cdot)$ can be described as
    \begin{align}
        \mathrm{graph}(\theta_i(\mathbf L_0,\cdot))&=\{(\mathbf E,t)\in G(k,n)\times \R \ | \ \exists a_1,\dots,a_k \in \R^n, \ \mathbf E=[a_1 \dots a_k], \\& \quad\quad(A_1^{\top}A_1+A_2^{\top}A_2)=\mathds{1}, \ t=\arccos((\lambda_i(A_1A_1^{\top}))^{\frac{1}{2}})\} \\
        &= \{(\mathbf E,t)\in G(k,n)\times \R \ | \ \exists a_1,\dots,a_k \in \R^n, \ \mathbf E =[a_1 \dots a_k], \\& \quad \quad(A_1^{\top}A_1+A_2^{\top}A_2)=\mathds{1}, \ \cos t=\lambda_i(A_1A_1^{\top})^{\frac{1}{2}})\},
    \end{align}
    where $\lambda_i(\cdot)$ denotes the function giving the $i$--th eigenvalue of a matrix, which is a semialgebraic function. Remark that by the orthogonality condition on the columns of $A$, we only need the restriction of the cosine function to the interval $[0,\frac{\pi}{2}]$. Therefore the functions $\theta_i(\mathbf L,\cdot)$ are globally subanalytic. 
    
 The fact that the Grassmann distance is globally subanalytic follows now from \eqref{anglesdistance} and the fact that sums, products and compositions of definable functions in an o--minimal structure are still definable.
\end{proof}

\section{The Grassmann Distance Complexity}\label{finiteness section}
   Assume $X \subseteq G(k,n)$ is a (possibly stratified) submanifold of $G(k,n)$. For a generic point $\mathbf L \notin X$ we want to minimize the Grassmann distance from $\mathbf L$ to $X$. In other words, we are asked to minimize the function
        \[\dist_{\mathbf L}:G(k,n)\to \R, \quad \dist_{\mathbf{L}}(\mathbf E):=\dist(\mathbf{L}, \mathbf{E})\]
        constrained to the set $X$.
        
        If the function $\dist_{\mathbf L}|_{X}$ was smooth, the nearest points to $\mathbf{L}$ in $X$ can be found among the critical points of $\dist_{\mathbf L}|_{X}$. In general, however, $\dist_{\mathbf L}|_{X}$ is not smooth, this is because $\dist_{\mathbf{L}}$ itself is not smooth. In fact, the set of points where $\dist_{\mathbf{L}}$ is not differentiable consists of $\mathbf{L}$ and its cut locus (\cref{propo:distsmooth}).    

Still the function $\dist_{\mathbf L}$ is locally Lipschitz and therefore, by Rademacher's Theorem, it is differentiable almost everywhere. The same is true for $\dist_{\mathbf{L}}|_X$. In particular, following Clarke's \cite{clarke}, we can \emph{define} the critical points of $\dist_{\mathbf L}|_X$ through the notion of its subdifferential (see \cref{def:subdifferential}): we say that a point $\mathbf{E}\in X$ is critical for $\dist_{\mathbf L}|_X$ if its  subdifferential at $\mathbf{E}$ contains the zero vector. With this definition, minima are critical points. 

We will show in \cref{ignoringsection} that  the minimizers we are looking for, for a generic $\mathbf{L}$ cannot be on the cut locus of $\mathbf{L}$ and therefore, for the purposes of the \emph{minimization} problem, we can exclude them (\cref{ignorethecut}). However, the non--differentiability of $\dist_{\mathbf{L}}$  opens the question of the study of the structure of the subdifferential of $\dist_{\mathbf{L}}$ and its restriction to $X$ (for instance, maxima of this function can well be non--differentiability points, even in the generic case). We will take over this study, which is of independent interest, in \cref{subdifferentialsection} and then come back to the nearest point problem in  \cref{ignoringsection}.

\subsection{The subdifferential of the distance function}\label{subdifferentialsection}We begin this section stating the relevant definition. 

\subsubsection{The subdifferential}Here we assume that a locally Lipschitz function is defined on a Riemannian manfiold $X$ (so that it makes sense to talk about ``Lipschitzianity'' and of the gradient of a function $f:X\to \R$). Recall again that locally Lipschitz functions are differentiable almost everywhere, by Rademacher's Theorem, and therefore the following definition makes sense.
   \begin{definition}[The subdifferential and critical points]\label{def:subdifferential}The \textit{subdifferential} of a locally Lipschitz function $f:X\to \R$ at a point $p$ of its domain is defined as 
        \begin{align}\label{subdiffdef}
    \partial_pf := \mathrm{co}\left\{\underset{p_n \rightarrow p}{\mathrm{lim}} \nabla f(p_n) \ | \ (p_n)_n\subseteq \omega(f) \ \text{and the limit exists}\right\},
\end{align}
where $\mathrm{co}$ denotes the convex hull, $\omega(f)$ is the set differentiability points for $f$ and $\nabla f$ is the gradient of $f$.
We say that a point $p$ is critical for $f$ if and only if $0 \in \partial_p f$.
\end{definition}
Note that, if $p \in \omega(f)$, then $\partial_pf=\{\nabla f(p)\}$ so that the notion of subdifferential directly extends that of gradient at points of non-differentiability. 

\subsubsection{Differentiable points of the distance function}To state the basic result concerning differentiability properties of $\dist_{\mathbf{L}}$ we first recall the following definition.

\begin{definition}[Cut locus]\label{def:cut}Let $(M, g)$ be a Riemannian manifold and $p\in M$. The \emph{cut--locus} of $p$, denoted by $\mathrm{cut}(p)$, is the closure of the  set of points in $M$ that are connected to $p$ by two or more distinct shortest geodesics.
\end{definition}

It is a standard fact in differential geometry that, in a Riemannian manifold $(M, g)$, for a given $p\in M$, away from $\{p\}\cup \mathrm{cut}(p)$ the Riemannian distance function from $p$ is smooth with gradient at $x$ given by the tangent vector to the unique length minimizing geodesic joining $p$ with $x$.
From this we get the following proposition.

\begin{proposition}\label{propo:distsmooth} The function $\dist_{\mathbf L}$ is smooth on $G(k,n) \setminus (\mathrm{cut}(\mathbf L) \cup \mathbf L)$. Moreover, at a differentiability point $\mathbf E$ the gradient of $\dist_{\mathbf L}$ is given by the tangent vector to the unique length minimizing geodesic going from $\mathbf L$ to $\mathbf E$.
\end{proposition}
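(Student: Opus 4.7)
The plan is to derive the proposition from classical Riemannian geometry applied to $G(k,n)$, which with the Grassmann metric of \cref{def:riemmetric} is a complete Riemannian manifold (being compact). Hopf--Rinow then guarantees that every point is joined to $\mathbf L$ by at least one minimizing geodesic and that $\exp_{\mathbf L}$ is defined on all of $T_{\mathbf L}G(k,n)$.

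The first step is to describe the domain of smoothness of $\dist_{\mathbf L}$ through the exponential map. Introduce the segment domain $\tilde U\subset T_{\mathbf L}G(k,n)$ consisting of the vectors $tv$ with $\|v\|=1$ and $0\le t<c(v)$, where $c(v)$ is the largest time up to which $s\mapsto\exp_{\mathbf L}(sv)$ is distance--minimizing. A classical theorem (see e.g.\ \cite{doCarmo}) asserts that $\exp_{\mathbf L}$ restricts to a diffeomorphism from $\tilde U$ onto $G(k,n)\setminus(\mathrm{cut}(\mathbf L)\cup\{\mathbf L\})$: injectivity follows from uniqueness of the minimizer away from the cut locus, while $c(v)$ lies at or below the first conjugate time, so $d\exp_{\mathbf L}$ has full rank on $\tilde U$. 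On the image one can therefore write $\dist_{\mathbf L}(\mathbf E)=\|\exp_{\mathbf L}^{-1}(\mathbf E)\|$, a composition of a smooth diffeomorphism with the Euclidean norm on $T_{\mathbf L}G(k,n)$, which is smooth away from zero. This proves smoothness.

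For the gradient formula, set $v_0=\exp_{\mathbf L}^{-1}(\mathbf E)\ne 0$ and $\gamma(t)=\exp_{\mathbf L}(tv_0)$, so that $\gamma(0)=\mathbf L$, $\gamma(1)=\mathbf E$, and $\dist_{\mathbf L}(\mathbf E)=\|v_0\|$. By the Gauss Lemma, the differential $d(\exp_{\mathbf L})_{v_0}$ sends the radial vector $v_0$ to $\dot\gamma(1)$ and preserves the inner product of any vector with the radial direction $v_0/\|v_0\|$. Combining this with the chain rule applied to the smooth function $w\mapsto\|w\|$ identifies $\nabla \dist_{\mathbf L}(\mathbf E)$ with $\dot\gamma(1)/\|\dot\gamma(1)\|$, that is, the unit tangent at $\mathbf E$ along the unique length--minimizing geodesic from $\mathbf L$ to $\mathbf E$.

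The argument is entirely classical and I expect no substantial obstacle. The only slightly delicate ingredient is the identification of $G(k,n)\setminus(\mathrm{cut}(\mathbf L)\cup\{\mathbf L\})$ with the diffeomorphic image of the segment domain under $\exp_{\mathbf L}$: one must verify that the closure appearing in \cref{def:cut} accounts for exactly the multiple--minimizer points together with the conjugate--type boundary points reached by minimizing geodesics. This is a standard fact for complete Riemannian manifolds and can simply be cited.
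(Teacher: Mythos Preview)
Your proof is correct and supplies precisely the standard Riemannian argument that the paper invokes without details: the paper simply states, just before the proposition, that ``away from $\{p\}\cup \mathrm{cut}(p)$ the Riemannian distance function from $p$ is smooth with gradient at $x$ given by the tangent vector to the unique length minimizing geodesic joining $p$ with $x$'' and declares the proposition as an immediate consequence. Your write--up via the segment domain, the diffeomorphism property of $\exp_{\mathbf L}$ on it, and the Gauss Lemma is exactly the classical justification of that fact, so there is no genuine difference in approach---you are filling in what the paper leaves to a citation. One cosmetic point: as you defined it, $\tilde U$ contains $0$, so $\exp_{\mathbf L}(\tilde U)=G(k,n)\setminus\mathrm{cut}(\mathbf L)$ rather than $G(k,n)\setminus(\mathrm{cut}(\mathbf L)\cup\{\mathbf L\})$; restricting to $\tilde U\setminus\{0\}$ fixes this and is harmless since the norm is smooth away from the origin anyway.
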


It is known \cite{WongGrassmann} that the cut locus of $\mathbf L \in G(k,n)$ with the Grassmann metric consists of the planes $\mathbf E$ having at least one principal angle with $\mathbf L$ equal to $\frac{\pi}{2}$:
\begin{align}\label{cutlocus}
        \mathrm{cut}(\mathbf L)=\mleft\{\mathbf E \in G(k,n) \ \bigg| \ \theta_k(\mathbf L, \mathbf E) = \frac{\pi}{2}\mright\}.
    \end{align}
    The cut locus of $\mathbf{L}$ is in fact an algebraic variety.
    
    \begin{lemma}\label{cutlemma}
        For any point $\mathbf L \in G(k,n)$ the cut locus $\mathrm{cut}(\mathbf L)$ is a Schubert variety of codimension one. 
    \end{lemma}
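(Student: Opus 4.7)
The plan is to combine the explicit description of the cut locus given in (\ref{cutlocus}) with a translation into a condition of nontrivial intersection with the orthogonal complement $\mathbf{L}^\perp$, and then recognize the result as a standard simple Schubert variety whose codimension is well known.

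First I would unfold the characterization
\[
\mathrm{cut}(\mathbf L)=\left\{\mathbf E \in G(k,n) \ \middle| \ \theta_k(\mathbf L, \mathbf E) = \tfrac{\pi}{2}\right\}
\]
and show that
\[
\theta_k(\mathbf L, \mathbf E)=\tfrac{\pi}{2} \quad\Longleftrightarrow\quad \mathbf E \cap \mathbf L^\perp \neq \{0\}.
\]
For this I would use the standard orthogonality relations satisfied by the principal vectors from \cref{def:principal}, namely that the principal vectors $(p_1,q_1),\dots,(p_k,q_k)$ can be chosen so that $\{p_1,\dots,p_k\}$ is an orthonormal basis of $\mathbf E$, $\{q_1,\dots,q_k\}$ is an orthonormal basis of $\mathbf L$, and $\langle p_i, q_j\rangle = \delta_{ij}\cos\theta_i$. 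If $\theta_k=\pi/2$ then $p_k$ is orthogonal to every $q_j$, hence $p_k\in \mathbf E\cap \mathbf L^\perp$. Conversely, if $0\neq v\in \mathbf E\cap \mathbf L^\perp$, then every pair of unit vectors $(p,q)$ with $p$ proportional to $v$ and $q\in\mathbf L$ satisfies $\langle p,q\rangle=0$, which forces the last principal angle to equal $\pi/2$ by the recursive maximization.

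Having identified
\[
\mathrm{cut}(\mathbf L)=\{\mathbf E\in G(k,n)\mid \mathbf E\cap \mathbf L^\perp\neq \{0\}\},
\]
I would then complete $\mathbf L^\perp$ to a complete flag $F_\bullet$ of $\R^n$ with $F_{n-k}=\mathbf L^\perp$ and recognize the right--hand side as the simple Schubert variety $\Omega_{(1,0,\dots,0)}(F_\bullet)$ associated to the partition $\lambda=(1)$. This identification is immediate from the standard flag-theoretic definition of Schubert varieties in $G(k,n)$.

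For the codimension, since $\dim \mathbf E+\dim \mathbf L^\perp = n$, generically $\mathbf E\cap \mathbf L^\perp=\{0\}$, so the cut locus is a proper subvariety. I would compute its codimension via the incidence correspondence
\[
I=\{(\ell,\mathbf E)\in \mathbb P(\mathbf L^\perp)\times G(k,n) \mid \ell\subset \mathbf E\},
\]
whose projection to $\mathbb P(\mathbf L^\perp)$ is a fibration with fiber $G(k-1,n-1)$, so $\dim I = (n-k-1)+(k-1)(n-k)$, and whose projection to $G(k,n)$ is generically injective onto $\mathrm{cut}(\mathbf L)$. A direct subtraction gives $\mathrm{codim}\,\mathrm{cut}(\mathbf L)=k(n-k)-\dim I = 1$. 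Alternatively, the same codimension follows from the classical formula $\mathrm{codim}\,\Omega_\lambda=|\lambda|$ for Schubert varieties. The main subtlety to verify carefully is the first step, the equivalence between $\theta_k=\pi/2$ and nontrivial intersection with $\mathbf L^\perp$; the remaining geometric content is standard.
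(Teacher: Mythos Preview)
Your proposal is correct and follows essentially the same route as the paper: translate the condition $\theta_k=\tfrac{\pi}{2}$ into $\dim(\mathbf E\cap\mathbf L^\perp)\geq 1$, extend $\mathbf L^\perp$ to a complete flag, and recognize the result as the Schubert variety associated to the one--box Young diagram, whose codimension is $|\lambda|=1$. You supply more detail than the paper does---an explicit argument for the equivalence via principal vectors and an alternative codimension computation through the incidence correspondence---but the strategy and the key identifications are identical.
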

    \begin{proof}
        By the properties of principal angles, the condition in \eqref{cutlocus} is equivalent to the fact $\mathrm{dim}(\mathbf E \cap \mathbf L^{\perp}) \geq 1$. Considering a complete flag where the element of dimension $n-k$ is given by $\mathbf L^{\perp}$, we can see that 
        \begin{align}\label{cut}
            \mathrm{cut}(\mathbf L)=\mleft\{\mathbf E \in G(k,n) \ | \ \mathrm{dim}(\mathbf E \cap \mathbf L^{\perp})\geq 1\mright\}
        \end{align}
        is the Schubert variety associated to the Young diagram consisting of just one box with respect to the chosen flag. Therefore it is of codimension one (see \cite{Fultontableux} for details on Schubert varieties and Young diagrams).
    \end{proof}
The advantage of the description given by \cref{cutlemma} is that we can use algebraic geometry to stratify the cut locus as 
\begin{align}\label{cutstrata}
        \mathrm{cut}(\mathbf L) = \bigsqcup_{i=1}^k\Omega(j)
    \end{align}
    where 
    \begin{align}\label{cutstrataexplicit}
        \Omega(j)=\mleft\{\mathbf E \in G(k,n) \ | \ \mathrm{dim}(\mathbf E \cap \mathbf L^{\perp})=j\mright\}.
    \end{align}
    Each $\Omega(j)$ is the smooth part of the Schubert variety associated to the Young diagram corresponding to the partition $(j,\dots,j,0,\dots,0)$ where the first $j$ entries are $j$, with respect to a flag containing $\mathbf L^{\perp}$ as the $n-k$-dimensional element; in particular $\Omega(1)$ is the smooth locus of $\mathrm{cut}(\mathbf L)$. Moreover we see that the codimension of each strata $\Omega(j)$ in $G(k,n)$ is $j^2$. In terms of principal angles, the strata can be described as 
    \begin{align}
        \Omega(j)=\mleft\{\mathbf E \in G(k,n) \ | \ \theta_k(\mathbf L,\mathbf E)=\dots = \theta_{k-j+1}(\mathbf L,\mathbf E)=\frac{\pi}{2}>\theta_{k-j}(\mathbf L,\mathbf E)\mright\}.
    \end{align}
    Another advantage of describing the strata of the cut locus as the smooth loci of some Schubert varieties, is that we can explicitly describe the tangent space at smooth points of them. More precisely, if we have a  Schubert variety $\Omega \subseteq G(k,n)$ defined by the single condition 
    \begin{align}
        \Omega = \{\mathbf E \in G(k,n) \ | \ \mathrm{dim}(\mathbf E \cap \mathbf V_i) \geq r\}
    \end{align}
    for some fixed $i$--dimensional subspace $\mathbf V_i$, then at a smooth point $\mathbf E \in \Omega$ (i.e. a point such that $\mathrm{dim}(\mathbf E \cap \mathbf V_i)=r$), using the description of tangent vectors as linear maps, we have \cite[Proposition 4.3]{Feher}
\begin{align}\label{tangentschubert}
        T_{\mathbf E}\Omega = \{\varphi:\mathbf E \longrightarrow \mathbf E^{\perp} \ | \ \varphi(\mathbf E \cap \mathbf V_i) \subseteq \mathbf E^{\perp} \cap \mathbf V_i\},
    \end{align}
    that is, under the isomorphism from \cref{def:hom}, those maps who preserve the intersection between $\mathbf E$ and $\mathbf V_i$ still inside $\mathbf V_i$. \\
    \subsubsection{The structure of the subdifferential of the Grassmann distance.}The following fact is elementary and can be proved exactly as in \cite[Proposition 2.1.2]{generalizedmorse}.
    \begin{proposition}\label{propo:asin}Let $\mathbf{L}\in G(k,n)$. For every $\mathbf S \in G(k,n)$
    \begin{align}\label{subdiff}
        \partial_{\mathbf S}\delta_{\mathbf L} = \mathrm{co}\mleft\{D_v\exp_{\mathbf L}\mleft(\frac{v}{\norm{v}}\mright) \ \bigg| \ v \in \exp_{\mathbf L}^{-1}(\mathbf S) \cap R_{\frac{\pi}{2}}\mright\},
    \end{align}
    where $\exp_{\mathbf L}$ is the exponential map of $G(k,n)$ at $\mathbf L$ and we use the notation $R_{\frac{\pi}{2}}$ from \eqref{Rpi2}, identifying tangent vectors and matrices via the isomorphism \eqref{matrixiso}. In particular, if $\mathbf S \notin (\mathrm{cut}(\mathbf L) \cup \mathbf L)$, then $\partial_{\mathbf S}\delta_{\mathbf L}=\{\nabla \delta_{\mathbf L}(\mathbf S)\}$.
    \end{proposition}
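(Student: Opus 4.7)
The strategy mirrors \cite[Proposition 2.1.2]{generalizedmorse}: combine \cref{propo:distsmooth}, which identifies $\nabla\dist_{\mathbf{L}}$ at a smoothness point with the unit tangent of the minimizing geodesic, with \cref{boundedprop}, which parameterizes length-minimizing geodesics emanating from $\mathbf{L}$ by vectors in $R_{\frac{\pi}{2}}$. A direct consequence of \cref{boundedprop} combined with the description \eqref{cutlocus} of the cut locus is that $\exp_{\mathbf{L}}$ restricts to a diffeomorphism from the open subset $R_{\frac{\pi}{2}}^{\circ}:=\{A\in R_{\frac{\pi}{2}} : \sigma_i(A)<\tfrac{\pi}{2}\ \forall i\}$ onto $G(k,n)\setminus \mathrm{cut}(\mathbf{L})$, since a principal angle between $\mathbf{L}$ and $\exp_{\mathbf{L}}(v)$ equal to $\pi/2$ corresponds exactly to a singular value of $v$ touching the boundary of $R_{\frac{\pi}{2}}$.

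As a first step I would dispose of the smooth case $\mathbf{S}\notin\mathrm{cut}(\mathbf{L})\cup\{\mathbf{L}\}$. There is then a unique $v\in R_{\frac{\pi}{2}}^\circ$ with $\exp_{\mathbf{L}}(v)=\mathbf{S}$, and the geodesic $\gamma(t)=\exp_{\mathbf{L}}(tv)$ is length-minimizing with constant speed $\|v\|$. Applying \cref{propo:distsmooth} gives
\[
\nabla\dist_{\mathbf{L}}(\mathbf{S})=\frac{\dot\gamma(1)}{\|\dot\gamma(1)\|}=\frac{D_v\exp_{\mathbf{L}}(v)}{\|v\|}=D_v\exp_{\mathbf{L}}\!\left(\frac{v}{\|v\|}\right),
\]
using linearity of the differential. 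This proves the last sentence of the proposition and simultaneously identifies, at any preimage $v$, the vector appearing on the right-hand side of \eqref{subdiff}.

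For the general formula I would establish both inclusions. For ``$\subseteq$'': take smoothness points $\mathbf{S}_n\to\mathbf{S}$ along which $\nabla\dist_{\mathbf{L}}(\mathbf{S}_n)$ converges, and let $v_n\in R_{\frac{\pi}{2}}^\circ$ be their unique preimages. Since $R_{\frac{\pi}{2}}$ is closed and bounded, hence compact, a subsequence of $(v_n)$ converges to some $v\in R_{\frac{\pi}{2}}$ with $\exp_{\mathbf{L}}(v)=\mathbf{S}$. Joint continuity of $(v,w)\mapsto D_v\exp_{\mathbf{L}}(w)$ then forces $\nabla\dist_{\mathbf{L}}(\mathbf{S}_n)\to D_v\exp_{\mathbf{L}}(v/\|v\|)$, so every limit of gradients that appears in \eqref{subdiffdef} lies in the right-hand side of \eqref{subdiff}, and taking convex hulls preserves the inclusion. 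For ``$\supseteq$'': given any $v\in\exp_{\mathbf{L}}^{-1}(\mathbf{S})\cap R_{\frac{\pi}{2}}$, I would perturb its SVD $v=U\Sigma V^\top$ by slightly shrinking each diagonal entry of $\Sigma$ equal to $\pi/2$ to obtain $v_n\in R_{\frac{\pi}{2}}^\circ$ with $v_n\to v$; then $\mathbf{S}_n:=\exp_{\mathbf{L}}(v_n)\notin\mathrm{cut}(\mathbf{L})$ is a smoothness point, and the smooth-case formula applied along $\mathbf{S}_n$ yields $\nabla\dist_{\mathbf{L}}(\mathbf{S}_n)\to D_v\exp_{\mathbf{L}}(v/\|v\|)$, placing this vector in $\partial_{\mathbf{S}}\dist_{\mathbf{L}}$.

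The only delicate point is the degenerate case $\mathbf{S}=\mathbf{L}$, where $v=0$ and the normalization $v/\|v\|$ is undefined; this case is excluded from the nearest-point problem that motivates the statement and, if needed, can be handled separately by noting that $\mathbf{L}$ is a global minimum of $\dist_{\mathbf{L}}$, so $0\in\partial_{\mathbf{L}}\dist_{\mathbf{L}}$ automatically. Apart from this mild technicality, the entire argument reduces to compactness of $R_{\frac{\pi}{2}}$, continuity of $\exp_{\mathbf{L}}$ together with its differential, and an elementary SVD perturbation to push a vector off the boundary of $R_{\frac{\pi}{2}}$.
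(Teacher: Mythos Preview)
Your proposal is correct and follows exactly the approach the paper intends: the paper does not provide its own proof but simply states that the result is elementary and can be proved exactly as in \cite[Proposition 2.1.2]{generalizedmorse}, and the argument you have written out is precisely that standard one (compactness of $R_{\frac{\pi}{2}}$, continuity of $D\exp_{\mathbf L}$, and an SVD perturbation to push vectors off $\partial R_{\frac{\pi}{2}}$). Your observation that the case $\mathbf S=\mathbf L$ is degenerate---since then $\exp_{\mathbf L}^{-1}(\mathbf L)\cap R_{\frac{\pi}{2}}=\{0\}$ and $v/\|v\|$ is undefined---is well taken; the statement tacitly assumes $\mathbf S\neq\mathbf L$.
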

    
    From \cref{propo:asin} it follows that, in order to get a description of the subdifferential of $\dist_{\mathbf{L}}$ at points  $\mathbf S \in \mathrm{cut}(\mathbf L)$, we need first to understand the set of preimages $\exp_{\mathbf L}^{-1}(\mathbf S)\cap R_{\frac{\pi}{2}}$. These preimages are described by the following proposition.
    
\begin{proposition}\label{preimagelemma}
        Let $\mathbf S \in \Omega(j) \subseteq \mathrm{cut}(\mathbf L)$ and pick a representative $\widehat{L}\in p^{-1}(\mathbf{L})$. Using the isomorphism given by \cref{def:vA}, pick an element $A_0\in \exp_{\mathbf{L}}^{-1}(\mathbf{S})\cap R_{\frac{\pi}{2}}.$ Let $A_0=U\Sigma V^\top$ be an SVD for $A_0$ with $\Sigma=(\theta_1, \ldots, \theta_{k-j}, \tfrac{\pi}{2}, \ldots, \tfrac{\pi}{2})$. Then
        \[\label{AWdef}\exp_{\mathbf{L}}^{-1}(\mathbf{S})\cap R_{\frac{\pi}{2}}=\left\{A_{W}:=U\Sigma \mleft(\begin{array}{cc}
            \mathds 1_{k-j} & 0 \\
             0 & W 
        \end{array}\mright)V^{\top}\,\bigg|\, W\in O(j)\right\}.\] 
        
     Moreover, under the isomorphism of \cref{def:vA}, if $W_1\neq W_2$ the matrices $A_{W_1}$ and $A_{W_2}$ correspond to different geodesics connecting $\mathbf L$ to $\mathbf S$, i.e. for every $0<t<1$ we have $\exp_{\mathbf L}(tv_{A_{W_1}})\neq \exp_{\mathbf L}(tv_{A_{W_2}})$.
        \end{proposition}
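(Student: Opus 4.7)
The plan is to establish three things: (a) every $A_W$ lies in $R_{\frac{\pi}{2}}$ and satisfies $\exp_{\mathbf L}(v_{A_W})=\mathbf S$; (b) every preimage has the form $A_W$ for some $W\in O(j)$; and (c) distinct $W$'s produce distinct geodesics on the whole open interval $(0,1)$. After replacing $\mathbf S$ by $\widehat L^{-1}\mathbf S$ and $\mathbf L$ by $\mathbf E_0$, I may assume $\widehat L=\mathds 1$. For (a), the key algebraic observation is that $B_W:=\begin{pmatrix}\mathds 1_{k-j} & 0 \\ 0 & W\end{pmatrix}\in O(k)$, so $A_W=U\Sigma (VB_W^\top)^\top$ is itself an SVD of $A_W$ with the same $\Sigma$; in particular $A_W\in R_{\frac{\pi}{2}}$. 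Applying \cref{geodesicprop} with this SVD, for $i\leq k-j$ the $i$--th column of $VB_W^\top$ is unchanged from $V$, while for $i>k-j$ the column (which genuinely depends on $W$) is annihilated by the factor $\cos\frac{\pi}{2}=0$ in the geodesic formula. Hence the representing matrix is independent of $W$ and coincides with the one for $A_0$, giving $\exp_{\mathbf L}(v_{A_W})=\mathbf S$.

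For (b), take $A\in \exp_{\mathbf L}^{-1}(\mathbf S)\cap R_{\frac{\pi}{2}}$ and fix an SVD $A=\tilde U\Sigma\tilde V^\top$ (its singular values agree with those of $A_0$ by \cref{boundedprop}). The geodesic formula furnishes an orthonormal basis $\{b_i:=(\cos(\theta_i)\tilde v_i,\sin(\theta_i)\tilde u_i)\}_{i=1}^{k}$ of $\mathbf S$. Intersecting $\mathbf S$ with $\mathbf L^\perp$ kills the contributions with $i\leq k-j$ and shows that both $(u_{k-j+1},\dots,u_k)$ and $(\tilde u_{k-j+1},\dots,\tilde u_k)$ are orthonormal bases of $\mathbf S\cap\mathbf L^\perp$, so $\tilde U_j=U_jW_U$ for some $W_U\in O(j)$. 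On the orthogonal complement $\mathbf S\ominus(\mathbf S\cap\mathbf L^\perp)$, the self--adjoint operator $\pi^*\pi$ (with $\pi$ the orthogonal projection onto $\mathbf L$) is diagonal in either basis with eigenvalues $\cos^2(\theta_i)$, $i\leq k-j$; matching eigenspaces shows that the $\tilde u_i,\tilde v_i$ with $i\leq k-j$ agree with $u_i,v_i$ up to signs or orthogonal rotations within repeated--eigenvalue blocks, all of which leave the rank--one summands $\theta_i\tilde u_i\tilde v_i^\top$ invariant, and $\tilde V_j=V_jW_V$ for some $W_V\in O(j)$. Substituting into $A=\sum_i\theta_i\tilde u_i\tilde v_i^\top$ then yields $A=U_{k-j}D_{k-j}V_{k-j}^\top+\frac{\pi}{2}\,U_j(W_UW_V^\top)V_j^\top=A_W$ with $W=W_UW_V^\top\in O(j)$.

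For (c), suppose $\exp_{\mathbf L}(tv_{A_{W_1}})=\exp_{\mathbf L}(tv_{A_{W_2}})=:\mathbf M$ for some $t\in(0,1)$. \Cref{boundedprop} applied to $tA_{W_\ell}$ shows that the principal angles between $\mathbf L$ and $\mathbf M$ are $t\theta_1,\dots,t\theta_{k-j},t\frac{\pi}{2},\dots,t\frac{\pi}{2}$, all strictly less than $\frac{\pi}{2}$, so $\mathbf M\notin\mathrm{cut}(\mathbf L)$. The two restricted geodesics $\gamma_{A_{W_\ell}}|_{[0,t]}$ have the same length $t\|A_{W_\ell}\|_F=t\,\dist(\mathbf L,\mathbf M)$ and are therefore both length--minimizing; uniqueness of the minimizing geodesic outside the cut locus forces them to coincide on $[0,t]$. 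Matching initial velocities gives $A_{W_1}=A_{W_2}$, and the identity $A_{W_1}-A_{W_2}=\frac{\pi}{2}U_j(W_1-W_2)V_j^\top$ combined with the full column rank of $U_j$ and $V_j$ then forces $W_1=W_2$.

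The main obstacle is the careful bookkeeping in (b): one must match the orthogonal freedom in the SVD of $A$ with that of $A_0$ when the $\theta_i$'s repeat (including the case $\theta_1=0$), and verify that the genuinely new degrees of freedom in the preimage live only on the $\frac{\pi}{2}$--block of $\Sigma$ and are captured by a single $O(j)$ parameter $W$---rather than by the larger $O(j)\times O(j)$ one might naively expect from recording the independent ambiguities on $\tilde U_j$ and $\tilde V_j$.
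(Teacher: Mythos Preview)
Your proposal is correct and follows essentially the same approach as the paper. Both arguments identify the preimage freedom with the non--uniqueness in the SVD/principal--vector data and observe that on blocks with singular value $<\tfrac{\pi}{2}$ the ambiguity is the \emph{same} orthogonal change on $U$ and $V$ (hence cancels), while on the $\tfrac{\pi}{2}$--block the changes on $U$ and $V$ are decoupled and only the product $W=W_UW_V^\top\in O(j)$ survives. Your part (a) and especially your part (c)---using that $t A_W\in\mathrm{int}(R_{\pi/2})$ so $\mathbf M\notin\mathrm{cut}(\mathbf L)$ and invoking uniqueness of minimizing geodesics---are more explicit than the paper, which leaves (c) to the reader; conversely, you might spell out more clearly why $\tilde V_j=V_jW_V$ (it follows because the eigenspace argument pins down $\mathrm{span}(\tilde v_1,\dots,\tilde v_{k-j})=\mathrm{span}(v_1,\dots,v_{k-j})$, and orthogonality in $O(k)$ then forces the spans of the last $j$ columns to coincide).
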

\begin{proof}Let $A\in\exp_{\mathbf{L}}^{-1}(\mathbf{S})$ with $A\neq A_0$ and with SVD $A=\widetilde U \widetilde \Sigma \widetilde V^{\top}$. We want to understand the relation between $A_0$ and $A$. 

Note that all the points in $\exp_{\mathbf L}^{-1}(\mathbf S)\cap R_{\frac{\pi}{2}}$ will actually be on the boundary $\partial R_{\frac{\pi}{2}}$ by \cref{boundedprop}.

By the results of \cref{grassmanngeometrysection}, we know that: the columns of $\widetilde V$ correspond to the coordinates of the principal vectors of $\mathbf L$ (with $\mathbf S$) with respect to the basis given by $\widehat L$; the matrix $\widetilde U$ contains the coordinates of the vectors orthogonal to the principal ones of $\mathbf L$ in the planes spanned by the couple of principal vectors (the vectors $n_i$ in \cref{grassmanngeometrysection}); the matrix $\widetilde \Sigma$ contains the principal angles between $\mathbf L$ and $\mathbf S$. 

In particular we must have $\Sigma = \widetilde \Sigma$, while $\widetilde V$ and $\widetilde U$ must correspond to a different choice of principal vectors with respect to $V$ and $U$. We then have to understand how much freedom do we have when choosing principal vectors and when different choices lead to different matrices (compare with \cite[Proposition B.1]{grassmannhandbook}):
\begin{itemize}[label={--}]
        \item if a principal angle has multiplicity $1$, the corresponding pair of principal vectors is uniquely determined and the corresponding columns in $U$ and $V$ will be the same as in $\widetilde U$ and $\widetilde V$; 
        \item if a principal angle $\mu < \frac{\pi}{2}$ has multiplicity $m$, the corresponding principal vectors are determined up to an orthogonal transformation in $O(m)$, which has to be the same on both spaces. More explicitly, if $\{p_{i+1},\dots,p_{i+m}\}$ and $\{q_{i+1},\dots,q_{i+m}\}$ are principal vectors corresponding to the angle $\mu$, any other choice of principal vectors for the same angle $\{\widetilde p_{i+1},\dots,\widetilde p_{i+m}\}$ and $\{\widetilde q_{i+1},\dots,\widetilde q_{i+m}\}$ must be of the form 
        \begin{align}
            \mleft(\widetilde p_{i+1} \dots \widetilde p_{i+m}\mright) &= \mleft(p_{i+1} \dots  p_{i+m}\mright)\ Q \\
            \mleft(\widetilde q_{i+1} \dots \widetilde q_{i+m}\mright) &= \mleft(q_{i+1} \dots  q_{i+m}\mright)\ Q
        \end{align}
        for some $Q \in O(m)$. The fact that the matrix is the same for both the $p$'s and the $q$'s implies that this different choice of principal vectors in the SVD has no effect, as the matrix $Q$ cancels out when multiplying back $\widetilde U$ and $\widetilde V$ (in order to obtain the matrix $A$); 
        \item if an angle of $\frac{\pi}{2}$ has multiplicity $j$ and $\{p_{k-j+1},\dots,p_k\}$ and $\{q_{k-j+1},\dots,q_k\}$ are principal vectors corresponding to that angle, then any vector in $\mathrm{span}\{p_{k-j+1},\dots,p_k\}$ is orthogonal to any vector in $\mathrm{span}\{q_{k-j+1},\dots,q_k\}$. This means that if $\{\widetilde p_{k-j+1},\dots, \widetilde p_k\}$ and $\{\widetilde q_{k-j+1},\dots,\widetilde q_k\}$ are another choice of principal vectors corresponding to the angle of $\frac{\pi}{2}$, the orthogonal changes to apply to the original basis to get the new one need not be related. In other words 
        \begin{align}
            \mleft(\widetilde p_{k-j+1} \dots \widetilde p_{k}\mright) &= \mleft(p_{k-j+1} \dots  p_{k}\mright)\ W_1 \\
            \mleft(\widetilde q_{k-j+1} \dots \widetilde q_{k}\mright) &= \mleft(q_{k-j+1} \dots  q_{k}\mright)\ W_2
        \end{align}
        for some $W_1, W_2 \in O(j)$ arbitrary. This change now produces a matrix $A$ which is different from the original $A_0$. Denoting by $\Theta=\mathrm{diag}(\theta_1,\dots,\theta_{k-j})\in \R^{(k-j)\times(k-j)}$ we have  
        \begin{align}
            A=\widetilde U \Sigma \widetilde V = & \   U\mleft(\begin{array}{ccc}
                \mathds 1_{k-j} & 0 & 0 \\
                0 & W_1 & 0 \\ 
                0 & 0 & \mathds 1_{n-2k}
            \end{array}\mright)\mleft(\begin{array}{cc}
                 \Theta & 0 \\
                 0 & \frac{\pi}{2}\mathds 1_j \\
                 0 & 0 
            \end{array}\mright)\mleft(\begin{array}{cc}
                \mathds 1_{k-j} & 0 \\ 
                 0 & W_2^{\top}
            \end{array}\mright)V^{\top} = \\
            = & \ U \Sigma \mleft(\begin{array}{cc}
                 \mathds 1_{k-j} & 0 \\
                 0 & W_1W_2^{\top}
            \end{array}\mright) V^{\top}.
        \end{align}
        Remark that what matters in order to have a different matrix are not the specific orthogonal changes $W_1$ and $W_2$, but rather the relative change in the principal vectors given by $W_1W_2^{\top}$. 
    \end{itemize}
      It follows from this  that any $A \in \partial R_{\frac{\pi}{2}}$ such that $\exp_{\mathbf L}(v_A)=\mathbf S$ must be of the form 
    \begin{align}\label{AWdef}
        A=U\Sigma \mleft(\begin{array}{cc}
            \mathds 1_{k-j} & 0 \\
             0 & W 
        \end{array}\mright)V^{\top}
    \end{align}
    for some $W \in O(j)$, where $A_0=U \Sigma V^{\top}$ is a fixed matrix in $\partial R_{\frac{\pi}{2}}$ such that $\exp_{\mathbf L}(v_{A_0})=\mathbf S$. Viceversa any $W \in O(j)$ gives a different matrix A.
    
    The second part of the statement is immediate to verify and we leave it to the reader.
\end{proof}
We proceed now with the description of the subdifferential. This is slightly more complicated than the formulation of the previous result, because we need to ``compose'' with the differential of the exponential map.

\begin{theorem}\label{subdiffpropo}
        Let $\mathbf S \in \Omega(j) \subseteq \mathrm{cut}(\mathbf L)$ and pick a representative $\widehat S \in p^{-1}(\mathbf S)$. Using the isomorphism given in \cref{def:vA}, pick an element $B_0\in \exp_{\mathbf S}^{-1}(\mathbf L)\cap R_{\frac{\pi}{2}}.$ Let $ U \Sigma V^{\top}$ be an SVD for $B_0$ and define the matrix         \begin{align}\label{Cdef}
        C:=& \mleft(\begin{array}{cc}
             V & 0 \\
             0 &  U \Sigma
        \end{array}\mright), 
        \end{align}
        and, for every $W\in \R^{j\times j}$, the matrix 
        \begin{align}
        \overline{ W }:=& \mleft(\begin{array}{cc}
            0 & -\mleft(\begin{array}{cc}
                \mathds 1_{k-j} & 0  \\
                0 & W
            \end{array}\mright) \\ \label{bardef}
            \mleft(\begin{array}{cc}
                \mathds 1_{k-j} & 0  \\
                0 & W^{\top}
            \end{array}\mright) & 0
        \end{array}\mright).
    \end{align}

Then the subdifferential of the distance function from $\mathbf L$ at the point $\mathbf S$ is given by 
        \begin{align}\label{finalsubdiff}
            \partial_{\mathbf S}\delta_{\mathbf L} = -\frac{1}{\delta(\mathbf L, \mathbf S)}D_{\widehat S}p\mleft(\widehat S C \mleft\{\overline{W} \, | \,  \text{$W$ has singular values $\leq 1$} \ \mright\}C^{\top}\mright).
        \end{align}
    \end{theorem}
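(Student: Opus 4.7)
The approach combines three ingredients: the formula from \cref{propo:asin} for the subdifferential, the parametrization of exponential preimages from \cref{preimagelemma}, and the classical fact that $\mathrm{co}(O(j))=\{W\in\R^{j\times j}:\|W\|_{\mathrm{op}}\le 1\}$.

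I first recast \cref{propo:asin} at $\mathbf S$ rather than at $\mathbf L$. Time--reversal of minimizing geodesics gives a bijection $\exp_{\mathbf{L}}^{-1}(\mathbf{S})\cap R_{\frac{\pi}{2}}\leftrightarrow \exp_{\mathbf{S}}^{-1}(\mathbf{L})\cap R_{\frac{\pi}{2}}$, sending $v\mapsto \tilde v=-D_v\exp_{\mathbf L}(v)$, and one checks $D_v\exp_{\mathbf L}(v/\|v\|)=-\tilde v/\|\tilde v\|$. Since every preimage in $R_{\frac{\pi}{2}}$ has Frobenius norm $\dist(\mathbf L,\mathbf S)$, passing to matrices via \cref{def:vA} rewrites \eqref{subdiff} as
\[\partial_{\mathbf S}\delta_{\mathbf L}= -\frac{1}{\dist(\mathbf L,\mathbf S)}\,\mathrm{co}\bigl\{v_B\,\big|\,B\in\exp_{\mathbf S}^{-1}(\mathbf L)\cap R_{\frac{\pi}{2}}\bigr\}.\]
Applying \cref{preimagelemma} at $\mathbf S$ then parametrizes this preimage as $B_W=U\Sigma\,\mathrm{diag}(\mathds{1}_{k-j},W)\,V^\top$ with $W\in O(j)$.

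The core of the argument is the block identity $M_{B_W}=C\,\overline{W^\top}\,C^\top$, verified by direct block multiplication: expanding the right--hand side using \eqref{Cdef} and \eqref{bardef} produces off--diagonal blocks $-V\,\mathrm{diag}(\mathds{1}_{k-j},W^\top)\,\Sigma^\top U^\top$ (top--right) and $U\Sigma\,\mathrm{diag}(\mathds{1}_{k-j},W)\,V^\top$ (bottom--left), which match $-B_W^\top$ and $B_W$ in the definition \eqref{eq:MA} of $M_{B_W}$. That $C\overline W C^\top$ is automatically horizontal follows from $\overline W^\top=-\overline W$, which is immediate from \eqref{bardef}.

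Finally, the maps $W\mapsto\overline W$, $N\mapsto CNC^\top$, left multiplication by $\widehat S$, and $D_{\widehat S}p$ are all linear, so they commute with $\mathrm{co}(\cdot)$. Noting that $W\mapsto W^\top$ is a self--bijection both of $O(j)$ and of $\{W:\|W\|_{\mathrm{op}}\le 1\}$, one re--indexes and invokes the classical identification $\mathrm{co}(O(j))=\{W:\|W\|_{\mathrm{op}}\le 1\}$ to obtain
\[\mathrm{co}\bigl\{M_{B_W}\,\big|\,W\in O(j)\bigr\}=C\,\bigl\{\overline W\,\big|\,W\text{ has singular values }\le 1\bigr\}\,C^\top,\]
and substituting this into the first displayed formula yields \eqref{finalsubdiff}. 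The main obstacle is the block identity in the third paragraph: one must carefully track the transposition $W\leftrightarrow W^\top$ produced by the shape of $M_{B_W}$ and the way $\Sigma$, absorbed into $C$, interacts with the off--diagonal blocks of $\overline W$; the remaining steps are standard linear algebra and convex analysis.
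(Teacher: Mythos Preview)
Your proposal is correct and follows essentially the same approach as the paper: both start from \cref{propo:asin}, reverse time along the minimizing geodesics to pass from $\exp_{\mathbf L}^{-1}(\mathbf S)$ to $\exp_{\mathbf S}^{-1}(\mathbf L)$, parametrize the latter via \cref{preimagelemma}, verify the block identity $M_{B_W}=C\,\overline{W}\,C^\top$ (up to your harmless $W\leftrightarrow W^\top$ reindexing), and then push the convex hull through the linear maps using $\mathrm{co}(O(j))=\{\|W\|_{\mathrm{op}}\le 1\}$. One small imprecision: your sentence ``$C\overline W C^\top$ is automatically horizontal follows from $\overline W^\top=-\overline W$'' is not quite the right justification---skew--symmetry alone only places the matrix in $\mathfrak{so}(n)$, not in the horizontal subspace \eqref{orthogonalkernel}; what actually ensures horizontality is that $C$ is block--diagonal and $\overline W$ is block--off--diagonal, so the product has vanishing diagonal blocks (which your block computation already shows).
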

    \begin{proof}\cref{preimagelemma} gives a description of all the tangent vectors in $T_{\mathbf L}G(k,n)$ corresponding to length minimizing geodesics connecting $\mathbf L$ and $\mathbf S$ for any $\mathbf S \in \mathrm{cut}(\mathbf L)$. The expression \eqref{subdiff} for a point $\mathbf S \in \Omega(j)$ thus becomes
    \begin{align}\label{Wsubdiff}
        \partial_{\mathbf S}\delta_{\mathbf L} = \mathrm{co}\mleft\{D_{v_{A_W}}\exp_{\mathbf L}\mleft(\frac{v_{A_W}}{\norm{v_{A_W}}}\mright) \,\bigg|\, \ W \in O(j)\mright\}.
    \end{align}
     Denote the length minimizing geodesic corresponding to the tangent vector $v_{A_W}$ as $\gamma_W$, i.e. $\gamma_W(t):= \exp_{\mathbf L}(tv_{A_W})$, $\gamma_W(0)=\mathbf L$ and $\gamma_W(1)=\mathbf S$. Then we can compute the vectors in \eqref{Wsubdiff} as follows. Define the curve $\sigma:(-\varepsilon,\varepsilon)\longrightarrow T_{\mathbf L}G(k,n)$ given by
    \begin{align}
        \sigma(t):= v_{A_W} + t\frac{v_{A_w}}{\norm{v_{A_W}}}.
    \end{align}
    Then $\sigma(0)= v_{A_W}$ and $\dot\sigma(0)=\frac{v_{A_W}}{\norm {v_{A_W}}}$ so that $\sigma$ is an adapted to curve to $\frac{v_{A_W}}{\norm {v_{A_W}}} \in T_{v_{A_W}}(T_{\mathbf L}G(k,n)) \cong T_{\mathbf L}G(k,n)$. We can compute the differential as 
    \begin{align}
        D_{v_{A_W}}\exp_{\mathbf L}\mleft(\frac{v_{A_W}}{\norm{v_{A_W}}}\mright)& = \frac{d}{dt}\exp_{\mathbf L}(\sigma(t))\bigg|_{t=0} = \frac{d}{dt}\exp_{\mathbf L}\mleft(v_{A_W} + t\frac{v_{A_w}}{\norm{v_{A_W}}}\mright)\bigg|_{t=0} \\
        & = \frac{d}{dt}\exp_{\mathbf L}\mleft(\mleft(1+\frac{t}{\norm{v_{A_W}}}\mright)v_{A_W}\mright)\bigg|_{t=0} \\
        & = \frac{d}{dt}\gamma_W\mleft(1+\frac{t}{\norm{v_{A_W}}}\mright)\bigg|_{t=0} \\
        & = \frac{1}{\norm{v_{A_W}}}\ \dot \gamma_W(1),
    \end{align}
    which is the tangent vector to the geodesic $\gamma_W$ at the point $\mathbf S$, rescaled by the speed. The subdifferential \eqref{Wsubdiff} can be written as 
    \begin{align}\label{Wsubdiffbis}
        \partial_{\mathbf S}\delta_{\mathbf L} = \mathrm{co}\mleft\{ \frac{1}{\norm{v_{A_W}}}\ \dot \gamma_W(1) \,\bigg|\,  W \in O(j)\mright\}.
    \end{align}
    
    Now we invert the motion along the geodesics to go from $\mathbf S$ to $\mathbf L$. Define $\widetilde \gamma_W(t) := \gamma_W(1-t)$, so that $\widetilde \gamma_W(0)=\mathbf S$, $\widetilde \gamma_W(1)=\mathbf L$ and $\overset{.}{\widetilde \gamma}_W(t)=-\dot \gamma_W(1-t)$, which implies that $\widetilde \gamma_W$ is a lenght minimizing geodesic connecting $\mathbf S$ to $\mathbf L$. Remark that the speed of $\widetilde \gamma_W$ is the same as that of $\gamma_W$. Therefore $\dot{\gamma}_W(1)=-\overset{.}{\widetilde \gamma}_W(0)$ and the subdifferential \eqref{Wsubdiffbis} can be described as the convex hull of the tangent vectors in $T_{\mathbf S}G(k,n) \cap R_{\frac{\pi}{2}}$ corresponding to length minimizing geodesics from $\mathbf S$ to $\mathbf L$, rescaled by their norm and multiplied by $-1$.
    
The computation of such vectors is completely analogous to that we carried out in \cref{preimagelemma}. For every $W \in O(j)$ define 
    \begin{align}
        B_W := U\Sigma \mleft(\begin{array}{cc}
            \mathds 1_{k-j} & 0 \\
             0 & W^\top 
        \end{array}\mright)V^{\top}.
    \end{align}
By \cref{preimagelemma} we have
   \begin{align}
        \exp_{\mathbf S}^{-1}(\mathbf L) \cap R_{\frac{\pi}{2}} = \mleft\{v_{B_W} \ | \ W \in O(j) \mright\},
    \end{align}
    the reason being that if $\mathbf S$ is in the $j$-th stratum of $\mathrm{cut}(\mathbf L)$, viceversa $\mathbf L$ is in the $j$-th stratum of $\mathrm{cut}(\mathbf S)$ by symmetry of principal angles. 
    Remark that 
    \begin{align}
        \norm{v_{A_W}} = \delta(\mathbf L, \exp_{\mathbf L}(v_{A_W}))=\delta(\mathbf L, \mathbf S),
    \end{align}
    so that \eqref{Wsubdiffbis} becomes
    \begin{align}
       \partial_{\mathbf S}\delta_{\mathbf L} = -\frac{1}{\delta(\mathbf L, \mathbf S)}\ \mathrm{co}\mleft\{ v_{B_W} \ | \ W \in O(j) \mright\}.
    \end{align}
    
    Recall that, by the construction of the isomorphism from \cref{def:vA}, we have 
    \begin{align}
    v_B = D_{\widehat S}p\mleft(\widehat S\mleft(\begin{array}{c|c}
            0 & -B^{\top} \\ \hline
            B & 0
        \end{array}\mright)\mright) = D_{\widehat S}p(\widehat S M_B),
    \end{align}
    where the matrix $M_B$ is defined in  \eqref{eq:MA}.
    Using the linearity of the differential we obtain 
    \begin{align}
        \partial_{\mathbf S}\delta_{\mathbf L}&=-\frac{1}{\delta(\mathbf L, \mathbf S)}\ \mathrm{co}\mleft\{D_{\widehat S}p(\widehat S M_{B_W}) \ | \ W \in O(j)\mright\} = \\
        & = -\frac{1}{\delta(\mathbf L, \mathbf S)}D_{\widehat S}p \mleft(\widehat S \ \mathrm{co}\mleft\{M_{B_W} \ | \ W \in O(j)\mright\}\mright).
    \end{align}
       Given the definition of $B_W$, we can check that 
    \begin{align}
        M_{B_W} = C \overline W C^{\top}. 
    \end{align}
   Therefore, as the matrix $C$ is fixed and does not depend on $W \in O(j)$, we obtain
   \begin{align}
       \mathrm{co}\mleft\{M_{B_W} \ | \ W \in O(j)\mright\} = C \ \mathrm{co}\mleft\{\overline W \ | \ W \in O(j)\mright\}\ C^{\top}.
   \end{align}
    To understand the convex hull of the matrices $\overline W$, it is enough to look at the convex hull of the matrices
    \begin{align}
        \mleft(\begin{array}{cc}
                \mathds 1_{k-j} & 0  \\
                0 & W
            \end{array}\mright),\quad W\in O(j).
    \end{align}
Pick $\lambda_1,\dots,\lambda_r \geq 0$ such that $\lambda_1+\dots+\lambda_r=1$ and $W_1,\dots, W_r \in O(j)$. Then 
    \begin{align}
        \sum_{i=1}^r \lambda_i\mleft(\begin{array}{cc}
                \mathds 1_{k-j} & 0  \\
                0 & W_i
            \end{array}\mright) = \sum_{i=1}^r \mleft(\begin{array}{cc}
                \lambda_i \mathds 1_{k-j} & 0  \\
                0 & \lambda_i W_i
            \end{array}\mright) = \mleft(\begin{array}{cc}
                \mathds 1_{k-j} & 0  \\
                0 & \sum_{i=1}^r \lambda_i W_i
            \end{array}\mright),
    \end{align}
    from which we obtain 
    \begin{align}
        \mathrm{co}\mleft\{\mleft(\begin{array}{cc}
                \mathds 1_{k-j} & 0  \\
                0 & W
            \end{array}\mright) \ \bigg| \ W \in O(j)\mright\} = \mleft\{\mleft(\begin{array}{cc}
                \mathds 1_{k-j} & 0  \\
                0 & B
            \end{array}\mright) \ \bigg| \ B \in \mathrm{co}(O(j))\mright\}.
    \end{align}
    It is known that the convex hull of the orthogonal group is the set of matrices with operator norm smaller or equal than $1$ (see \cite[Section 3.4]{JNRSR} and \cite[Theorem 2.1]{convex}), which is the same as the set of matrices with singular values bounded by $1$.
    \end{proof}
    As a corollary, we deduce the following geometric description of the subdifferential of $\dist_\mathbf{L}$.
    \begin{corollary}\label{subdiffcoro}
        For every $\mathbf L \in G(k,n)$ and $\mathbf S \in \Omega(j) \subseteq \mathrm{cut}(\mathbf L)$ with $j \in \{1,\dots,k\}$, the subdifferential $\partial_{\mathbf S}\delta_{\mathbf L}$ is a convex set linearly isomorphic to the convex hull of the orthogonal group $O(j)$, in particular it is of dimension $j^2$.
    \end{corollary}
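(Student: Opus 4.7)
My plan is to treat this corollary as essentially a bookkeeping consequence of Theorem~\ref{subdiffpropo}, whose formula
\[
\partial_{\mathbf S}\delta_{\mathbf L} = -\frac{1}{\delta(\mathbf L,\mathbf S)}\,D_{\widehat S}p\bigl(\widehat S\,C\,\{\overline W : \text{$W$ has singular values } \leq 1\}\,C^{\top}\bigr)
\]
already exhibits $\partial_{\mathbf S}\delta_{\mathbf L}$ as the image of the set $\{W\in\R^{j\times j}:\|W\|_{\mathrm{op}}\leq 1\}$ under a concrete affine map. Using the same fact quoted at the end of the theorem's proof, namely $\mathrm{co}(O(j))=\{W\in\R^{j\times j}:\|W\|_{\mathrm{op}}\leq 1\}$ (see \cite[Section 3.4]{JNRSR}, \cite[Theorem 2.1]{convex}), the problem reduces to checking that this affine map is injective and that $\mathrm{co}(O(j))$ is a $j^2$-dimensional convex body.

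I would define
\[
\Phi:\R^{j\times j}\longrightarrow T_{\mathbf S}G(k,n),\qquad \Phi(W):=-\tfrac{1}{\delta(\mathbf L,\mathbf S)}\,D_{\widehat S}p\bigl(\widehat S\,C\,\overline W\,C^{\top}\bigr),
\]
and verify that it is affine and injective by decomposing it into four pieces. First, $W\mapsto \overline W$ is affine and injective since $W$ appears verbatim as the lower-right $j\times j$ block of $\overline W$. Second, conjugation $M\mapsto CMC^{\top}$ is linear and injective on the space of matrices of the form appearing in $\overline W$: the block $V\in O(k)$ is invertible, and $U\Sigma$ is injective as a linear map $\R^k\to\R^{n-k}$ because $U\in O(n-k)$ and the diagonal $\Sigma$ has strictly positive entries $\theta_1,\dots,\theta_{k-j},\tfrac{\pi}{2},\dots,\tfrac{\pi}{2}$, so $C$ has a left inverse. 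Third, left multiplication by $\widehat S\in O(n)$ is a linear bijection. Fourth, a direct block computation shows that $C\overline W C^{\top}$ is of the form $\bigl(\begin{smallmatrix} 0 & -B^\top \\ B & 0\end{smallmatrix}\bigr)$, so $\widehat S\,C\overline W C^{\top}$ lies in $(\mathrm{Ker}\,D_{\widehat S}p)^{\perp}$, where $D_{\widehat S}p$ is, by \cref{def:riemmetric}, a linear isometry onto $T_{\mathbf S}G(k,n)$. Composing the four steps yields an injective affine map $\Phi$.

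From here the conclusion is immediate: $\Phi$ restricts to an affine isomorphism between $\mathrm{co}(O(j))\subset\R^{j\times j}$ and $\partial_{\mathbf S}\delta_{\mathbf L}$, so $\partial_{\mathbf S}\delta_{\mathbf L}$ is convex (as the affine image of a convex set) and affinely, hence in particular linearly, isomorphic to $\mathrm{co}(O(j))$. Because $O(j)$ contains an open neighborhood of the identity inside its real-analytic hull in $\R^{j\times j}$ only after taking the convex hull---but more simply, because $\mathrm{co}(O(j))$ equals the operator-norm unit ball and thus has nonempty interior in $\R^{j\times j}$---its dimension is $j^2$, so $\dim(\partial_{\mathbf S}\delta_{\mathbf L})=j^2$ as claimed.

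The only mildly subtle point I expect to need care with is the injectivity at the level of $D_{\widehat S}p$: one must confirm that the block $C\overline W C^{\top}$ lands exactly in the off-diagonal antisymmetric form matching the orthogonal complement of $\mathrm{Ker}\,D_{\widehat S}p$ computed in \eqref{orthogonalkernel}. This is a short matrix computation using $C=\bigl(\begin{smallmatrix} V & 0 \\ 0 & U\Sigma\end{smallmatrix}\bigr)$ and the antisymmetric shape of $\overline W$; once it is in place, all remaining steps are linear-algebraic trivialities.
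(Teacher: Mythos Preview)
Your approach is essentially the same as the paper's: both start from \eqref{finalsubdiff}, observe that $C\overline{W}C^{\top}$ lands in the off--diagonal antisymmetric form of $(\mathrm{Ker}\,D_{\widehat S}p)^{\perp}$, and then reduce to checking that the map $W\mapsto C\overline{W}C^{\top}$ (equivalently $W\mapsto Y_W$ in the paper's notation) is injective on $\R^{j\times j}$.

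There is, however, one concrete error in your second step. You assert that ``the diagonal $\Sigma$ has strictly positive entries $\theta_1,\dots,\theta_{k-j},\tfrac{\pi}{2},\dots,\tfrac{\pi}{2}$, so $C$ has a left inverse.'' This fails in general: the corollary is stated for \emph{every} $\mathbf L$ and $\mathbf S\in\Omega(j)$, and nothing prevents $\mathbf S\cap\mathbf L$ from being nontrivial, in which case some of the $\theta_i$ are zero and $U\Sigma$ is not injective. Consequently $C$ need not have a left inverse and $M\mapsto CMC^{\top}$ need not be injective on all matrices of the shape $\overline{W}$. The paper handles exactly this point: after computing $C\overline{W}C^{\top}=M_{Y_W}$ with $Y_W=U\Sigma\bigl(\begin{smallmatrix}\mathds 1_{k-j}&0\\0&W^{\top}\end{smallmatrix}\bigr)V^{\top}$, it observes that the possible zero entries of $\Sigma$ sit in the upper-left block and therefore never touch the $W$--block, which is multiplied only by the $\tfrac{\pi}{2}$ entries. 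Hence $W\mapsto Y_W$ is injective regardless of whether some $\theta_i$ vanish. Replacing your left--inverse argument by this block observation repairs the gap, and the rest of your outline goes through verbatim.
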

    \begin{proof}
        We use the same notation as in \cref{subdiffpropo}. We define the matrix 
        \begin{align}
            Y_B:=U \Sigma \mleft(\begin{array}{cc}
                \mathds 1_{k-j} & 0 \\
                0 & B^{\top}
            \end{array}\mright)V^{\top} \in \R^{(n-k)\times k}
        \end{align}
        for any $B \in \mathrm{co}\{O(j)\}$. Then by \eqref{finalsubdiff} we obtain 
        \begin{align}
            \mathrm{dim}(\partial_{\mathbf S}\delta_{\mathbf L})=\mathrm{dim}\mleft(D_{\widehat S}p\mleft(\mleft\{\widehat S\mleft(\begin{array}{cc}
                0 & -Y_B^{\top} \\
                Y_B & 0
            \end{array}\mright) \ | \ B \in \mathrm{co}\{O(j)\}\mright\}\mright)\mright).
        \end{align}
        By \eqref{orthogonalkernel} we have that the set of matrices to which we are applying $D_{\widehat S}p$ lie in the orthogonal to its kernel, so that $D_{\widehat S}p$ preserves the dimension of such set. Since the matrix $\widehat S$ is orthogonal, we obtain 
        \begin{align}
            \mathrm{dim}(\partial_{\mathbf S}\delta_{\mathbf L}) = \mathrm{dim}\mleft(\mleft\{\mleft(\begin{array}{cc}
                0 & -Y_B^{\top} \\
                Y_B & 0
            \end{array}\mright) \ | \ B \in \mathrm{co}\{O(j)\}\mright\}\mright) = \mathrm{dim}\mleft(\mleft\{Y_B \ | \ B \in \mathrm{co}\{O(j)\}\mright\}\mright).
        \end{align}
        Since $ U$ and $V$ are orthogonal transformations, we only need to check how the matrix $\Sigma=\mathrm{diag}(\theta_1,\dots,\theta_{k-j},\frac{\pi}{2},\dots,\frac{\pi}{2})$ acts. It is easy to see that given the block structure of the product defining $Y_B$, the possible presence of $0$ entries in $\Sigma$, corresponding to a possible non-trivial intersection between $\mathbf S$ and $\mathbf L$, does not affect the part of the product involving the matrix $B$. As a consequence
        \begin{align}
            \mathrm{dim}(\partial_{\mathbf S}\delta_{\mathbf L}) = \mathrm{dim}\mleft(\mleft\{Y_B \ | \ B \in \mathrm{co}\{O(j)\}\mright\}\mright) = \mathrm{dim}(\mathrm{co}\{O(j)\})=j^2.
        \end{align} \end{proof}

\subsection{The restriction of the distance function to a submanifold}
 We move now to the study of the subdifferential of the restriction of $\dist_{\mathbf{L}}$ to a smooth submanifold $X\subset G(k,n)$. As we will see, in order to have a neat description of the subdifferential, we will need to impose some nondegeneracy conditions.
 
 \begin{definition}We say that a smooth submanifold $X \subseteq G(k,n)$ is transverse to $\mathrm{cut}(\mathbf L)$ if it is transverse to each stratum $\Omega(j)$ of the stratification \eqref{cutstrata}.
  \end{definition}
 We begin with the following easy observation.
 
 \begin{lemma}\label{generictrasnversality}
    Let $X \subseteq G(k,n)$ be a smooth submanifold. Then, for a generic $\mathbf L \in G(k,n)$, the manifold $X$ is transverse to $\mathrm{cut}(\mathbf L)$ .
\end{lemma}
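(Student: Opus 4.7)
The plan is to use the $O(n)$--equivariance of the stratified cut locus together with the parametric transversality theorem. The key observation is that if we rotate the basepoint $\mathbf L$, then the cut locus rotates accordingly: for every $R\in O(n)$ one has $\mathrm{cut}(R\cdot\mathbf L)=R\cdot\mathrm{cut}(\mathbf L)$, and each stratum in the decomposition \eqref{cutstrata} satisfies $\Omega_{R\cdot\mathbf L}(j)=R\cdot\Omega_{\mathbf L}(j)$. Thus it suffices to fix once and for all a basepoint $\mathbf L_0\in G(k,n)$ with associated strata $\Omega_{\mathbf L_0}(j)$ and to prove that for generic $R\in O(n)$ the submanifold $X$ is transverse to $R\cdot\Omega_{\mathbf L_0}(j)$ for every $j\in\{1,\dots,k\}$; by the transitivity of the $O(n)$--action and the definition of ``generic $\mathbf L$'', this is equivalent to the claim.

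First I would consider the smooth map
\[
\Psi: O(n)\times X\longrightarrow G(k,n),\qquad \Psi(R,\mathbf E):=R^{-1}\cdot\mathbf E.
\]
Since $O(n)$ acts transitively on $G(k,n)$, for any fixed $\mathbf E\in X$ the partial map $R\mapsto R^{-1}\cdot\mathbf E$ is a submersion onto $G(k,n)$; consequently $\Psi$ itself is a submersion. In particular $\Psi$ is transverse to every submanifold of $G(k,n)$, and in particular to each stratum $\Omega_{\mathbf L_0}(j)$, which is smooth (being the smooth locus of the Schubert variety described after \cref{cutlemma}).

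Next I would apply the parametric transversality theorem (in the form of, e.g., Sard's theorem for submersions): since $\Psi\pitchfork\Omega_{\mathbf L_0}(j)$, for almost every $R\in O(n)$ the restricted map $\Psi_R:X\to G(k,n)$, $\mathbf E\mapsto R^{-1}\mathbf E$, is transverse to $\Omega_{\mathbf L_0}(j)$. Since $\Psi_R$ is a diffeomorphism, this is exactly the statement that $X\pitchfork R\cdot\Omega_{\mathbf L_0}(j)=\Omega_{R\mathbf L_0}(j)$. Taking the intersection over the finitely many values $j=1,\dots,k$, we obtain a full--measure (in fact open and dense, since transversality is an open condition when the strata are closed in the appropriate sense) set of $R\in O(n)$ for which $X$ is transverse to every stratum of $\mathrm{cut}(R\mathbf L_0)$, i.e. transverse to $\mathrm{cut}(R\mathbf L_0)$ in the stratified sense.

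The only point that requires mild care is the openness/density of the ``good'' set of $\mathbf L$, since the strata $\Omega(j)$ are not individually closed (only their union is); however each $\Omega(j)$ is a locally closed smooth semialgebraic subset, so standard stratified transversality arguments apply and yield a residual (in particular dense) set of parameters. Finally, pushing the generic set of $R$'s in $O(n)$ forward through the quotient $p:O(n)\to G(k,n)$, $R\mapsto R\mathbf L_0$, produces the required generic set of $\mathbf L\in G(k,n)$, which completes the argument.
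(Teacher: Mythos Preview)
Your proof is correct and follows essentially the same approach as the paper: both arguments fix a reference point $\mathbf L_0$, use the transitivity of the $O(n)$--action to see that the map $O(n)\times X\to G(k,n)$ given by the group action is a submersion (hence transverse to every stratum $\Omega(j)$), and then invoke the Parametric Transversality Theorem to conclude that the partial map is transverse for almost every parameter. Your version is slightly more explicit about the stratified nature of the target and about pushing the generic set in $O(n)$ down to $G(k,n)$, but the core is identical.
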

\begin{proof}
    Let $g \in O(n)$ be such that $\mathbf L = g^{-1}\cdot \mathbf L_0$. Since the action of $O(n)$ on $G(k,n)$ is by diffeomorphisms which are also isometries, we have that $X$ is transverse to $\mathrm{cut}(\mathbf L)$ if and only if $g \cdot X$ is transverse to $\mathrm{cut}(\mathbf L_0)$, hence it is enough to check this condition for the generic $g \in O(n)$. Consider the map 
    \begin{align}
        F:O(n) \times X &\longrightarrow G(k,n) \\
        (g,\mathbf S) & \longmapsto g \cdot \mathbf S.
    \end{align}
    Since the projection map $O(n) \longrightarrow G(k,n)$ is a (Riemannian) submersion, the $O(n)$ part is already enough to give the surjectivity of the differential of $F$. It follows that $F$ is transverse to $\mathrm{cut}(\mathbf L_0)$. By the Parametric Transversality Theorem \cite[Chapter 3, Theorem 2.7]{hirsch}, the map $F(g,\cdot):X \longrightarrow G(k,n)$ is transverse to $\mathrm{cut}(\mathbf L_0)$ for almost every $g \in O(n)$, which is the same as $g \cdot X$ being transverse to $\mathrm{cut}(\mathbf L_0)$. 
    
\end{proof}

 Next, we use the genericity assumptions to prove the following lemma, which is an analogue of   \cite[Proposition 2.1.10]{generalizedmorse}, where the authors deal with the Euclidean distance function. 
 \begin{lemma}\label{subdiffprojectionlemma}
        Let $X$ be a smooth submanifold of $G(k,n)$. For a generic $\mathbf L \in G(k,n)$ and every $\mathbf S \in X$, we have 
        \begin{align}
            \partial_{\mathbf S}(\dist_{\mathbf L}|_X) = \mathrm{proj}_{T_{\mathbf S}X}(\partial_{\mathbf S}\dist_{\mathbf L}),
        \end{align}
        where $\mathrm{proj}_{T_{\mathbf S}X}$ is the orthogonal projection onto the tangent space to $X$ at $\mathbf S$. 
    \end{lemma}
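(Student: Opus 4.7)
The plan is to prove the equality by two inclusions, only one of which needs the genericity of $\mathbf{L}$.

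For the inclusion $\partial_{\mathbf S}(\dist_{\mathbf L}|_X) \subseteq \mathrm{proj}_{T_{\mathbf S}X}(\partial_{\mathbf S}\dist_{\mathbf L})$, no genericity is required. By \cref{cutlemma}, $\mathrm{cut}(\mathbf{L})$ is a proper Schubert variety, so $X\cap \mathrm{cut}(\mathbf{L})$ has measure zero in $X$ (assuming $X\not\subseteq \mathrm{cut}(\mathbf{L})$, which holds generically). By the standard fact that the subdifferential from \cref{def:subdifferential} is unchanged if one restricts to any full-measure subset of differentiability points, I may compute $\partial_{\mathbf S}(\dist_{\mathbf L}|_X)$ using only sequences $\mathbf{T}_n\in X\setminus \mathrm{cut}(\mathbf{L})$ with $\mathbf{T}_n\to \mathbf{S}$. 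At each such $\mathbf{T}_n$ the function $\dist_{\mathbf{L}}$ is smooth by \cref{propo:distsmooth}, and the chain rule for restrictions gives $\nabla(\dist_{\mathbf{L}}|_X)(\mathbf{T}_n)=\mathrm{proj}_{T_{\mathbf{T}_n}X}(\nabla \dist_{\mathbf{L}}(\mathbf{T}_n))$. Passing to a subsequence so that $\nabla \dist_{\mathbf{L}}(\mathbf{T}_n)$ converges (possible by boundedness of gradients of Lipschitz functions) and using the continuity of $\mathbf{T}\mapsto \mathrm{proj}_{T_{\mathbf{T}}X}$ along $X$, the limit equals $\mathrm{proj}_{T_{\mathbf{S}}X}$ of an element of $\partial_{\mathbf{S}}\dist_{\mathbf{L}}$. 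Taking convex hulls and invoking linearity of $\mathrm{proj}_{T_{\mathbf{S}}X}$ yields the inclusion.

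For the reverse inclusion $\mathrm{proj}_{T_{\mathbf S}X}(\partial_{\mathbf S}\dist_{\mathbf L})\subseteq \partial_{\mathbf S}(\dist_{\mathbf L}|_X)$, the generic transversality from \cref{generictrasnversality} is essential: for generic $\mathbf{L}$, $X$ is transverse to every stratum $\Omega(j)$ of $\mathrm{cut}(\mathbf{L})$. By convexity, it is enough to show that for every extreme point $w\in\partial_{\mathbf{S}}\dist_{\mathbf{L}}$ the image $\mathrm{proj}_{T_{\mathbf S}X}(w)$ belongs to $\partial_{\mathbf S}(\dist_{\mathbf L}|_X)$. By \cref{subdiffpropo} and \cref{preimagelemma}, any such extreme $w$ corresponds to a matrix $A_W$ with $W\in O(j)$ in $\exp_{\mathbf{L}}^{-1}(\mathbf{S})\cap R_{\frac{\pi}{2}}$, which defines a continuous local ``$W$-branch'' of the gradient field: on an open set $U_W\subseteq G(k,n)$ whose closure contains $\mathbf{S}$, the function $\nabla \dist_{\mathbf{L}}$ is the tangent vector at the endpoint of the unique length-minimizing geodesic arising from the smooth local inverse of $\exp_{\mathbf{L}}$ around $A_W$, and extends continuously to $\mathbf{S}$ with value $w$. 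The core claim is then that for every $W\in O(j)$ the intersection $X\cap U_W$ accumulates at $\mathbf{S}$; given this, one chooses $\mathbf{T}_n\in X\cap U_W\setminus \mathrm{cut}(\mathbf{L})$ with $\mathbf{T}_n\to \mathbf{S}$, obtains $\nabla \dist_{\mathbf{L}}(\mathbf{T}_n)\to w$ by continuity of the $W$-branch, and deduces $\nabla(\dist_{\mathbf L}|_X)(\mathbf{T}_n)\to \mathrm{proj}_{T_{\mathbf S}X}(w)$ as in the first inclusion.

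The main obstacle is the accumulation claim $X\cap U_W\ni \mathbf{T}_n\to \mathbf{S}$ for every $W$. To justify it, I would describe the $W$-branches as partitioning a punctured neighborhood of $\mathbf{S}$ into open ``sheets'' that are locally diffeomorphic via $\exp_{\mathbf{L}}$ to open subsets of the interior of $R_{\frac{\pi}{2}}$ adjacent to $A_W$, whose complement is exactly $\mathrm{cut}(\mathbf{L})$, stratified by the $\Omega(j)$ of codimension $j^2$; this codimension matches the dimension $j^2$ of $\partial_{\mathbf{S}}\dist_{\mathbf{L}}$ from \cref{subdiffcoro}. Transversality $X\pitchfork \Omega(j)$, together with a local dimension count, should then force $X$ to meet every sheet $U_W$ in an open subset of $X$ accumulating at $\mathbf{S}$. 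This is the analogue, adapted to the non-semialgebraic Grassmann setting, of the argument of \cite[Proposition 2.1.10]{generalizedmorse} for the Euclidean distance.
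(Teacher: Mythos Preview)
Your first inclusion is fine and matches the paper's argument. The divergence is in the reverse inclusion.

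The paper does not parametrize by extreme points $W\in O(j)$. Instead it takes a small neighbourhood $U_{\mathbf S}$ of $\mathbf S$ and writes $U_{\mathbf S}\setminus(\mathbf L\cup\mathrm{cut}(\mathbf L))=\bigsqcup_{i=1}^r A_i$ as a \emph{finite} union of connected components (finiteness holds because $\mathrm{cut}(\mathbf L)$ is algebraic). Setting $V_i:=\{\lim\nabla\dist_{\mathbf L}(\mathbf S_n)\mid \mathbf S_n\in A_i,\ \mathbf S_n\to\mathbf S\}$, one has $\partial_{\mathbf S}\dist_{\mathbf L}=\mathrm{co}\{V_1,\dots,V_r\}$, and transversality of $X$ to each stratum $\Omega(j)$ is invoked to guarantee that $X\cap A_i$ accumulates at $\mathbf S$ for every $i$, so that the limits realizable from within $X$ already exhaust all the $V_i$. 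Projecting then gives the equality.

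Your proposed decomposition into ``open sheets $U_W$'' indexed by $W\in O(j)$ cannot work as stated. For $j\geq 2$ the extreme points of $\partial_{\mathbf S}\dist_{\mathbf L}$ form a \emph{continuous} $O(j)$--family (this is exactly \cref{subdiffcoro}), so a punctured neighbourhood of $\mathbf S$ cannot be partitioned into uncountably many nonempty disjoint open sets $U_W$. What actually happens is that the complement of $\mathrm{cut}(\mathbf L)$ near $\mathbf S$ has only finitely many components $A_i$, and on each of them the gradient limit at $\mathbf S$ varies with the direction of approach, sweeping out a positive--dimensional piece $V_i$ of the extreme boundary. Your dimension count $\dim\partial_{\mathbf S}\dist_{\mathbf L}=j^2=\mathrm{codim}\,\Omega(j)$ is suggestive but does not produce the sheets you need; the transversality has to be exploited at the level of the finitely many $A_i$, not at the level of individual $W$'s. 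Rewriting your argument with the $A_i$ in place of the $U_W$ brings it in line with the paper's proof.
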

    \begin{proof}

    Remark that since $\mathbf L$ is just a point, $X$ is trivially transverse to $\mathbf L$. By \cref{generictrasnversality},  for the generic $\mathbf L$, the manifold $X$ is also transverse to $\mathrm{cut}(\mathbf L)$. The definition of subdifferential \eqref{subdiff} in this case reads as 
        \begin{align}\label{subdiff1}
            \partial_{\mathbf S}(\dist_{\mathbf L}|_X) = \mathrm{co}\left\{\underset{\mathbf S_n \rightarrow \mathbf S}{\mathrm{lim}} \nabla \dist_{\mathbf L}|_X(\mathbf S_n) \ | \ (\mathbf S_n)\subseteq \omega(\delta_{\mathbf L}|_X) \ \text{and the limit exists}\right\}.
        \end{align}
       By \cref{propo:distsmooth}, we have $\omega(\dist_{\mathbf L})=G(k,n) \setminus (\mathbf L \cup \mathrm{cut}(\mathbf L))$. Since $\mathrm{cut}(\mathbf L)$ has codimension $1$, by transversality $X \cap (\mathbf L \cup \mathrm{cut}(\mathbf L))$ has codimension at least $1$ in $X$. Moreover $\omega(\dist_{\mathbf L})\cap X \subseteq \omega(\dist_{\mathbf L}|_X)$ since, if $\dist_{\mathbf L}$ is differentiable at a point $\mathbf E \in X$, so is its restriciton to $X$. 
       
       These two facts together imply that any point in $\omega(\dist_{\mathbf L}|_X)$ can be approached by a sequence of points in $\omega(\dist_{\mathbf L}) \cap X$. As a consequence we obtain 
        \begin{align}\label{subdiff2}
            \eqref{subdiff1} = \mathrm{co}\left\{\underset{\mathbf S_n \rightarrow \mathbf S}{\mathrm{lim}} \nabla \dist_{\mathbf L}|_X(\mathbf S_n) \ | \ (\mathbf S_n)\subseteq \omega(\dist_{\mathbf L})\cap X \ \text{and the limit exists}\right\}.
        \end{align}
    At any differentiability point $\mathbf E \in \omega(\dist_{\mathbf L})\cap X$ we have that 
    \begin{align}
            \nabla \dist_{\mathbf L}|_X (\mathbf E) = \mathrm{proj}_{T_{\mathbf E}X}(\nabla \dist_{\mathbf L}(\mathbf E)).
        \end{align}
  Since the projection map $(\mathbf E,v)\longmapsto \mathrm{proj}_{T_{\mathbf E}X}(v)$ is continuous, we obtain
    \begin{align}
            \notag \eqref{subdiff2}&=\mathrm{co}\left\{\underset{\mathbf S_n \rightarrow \mathbf S}{\mathrm{lim}} \mathrm{proj}_{T_{\mathbf S_n}X}(\nabla \dist_{\mathbf L}(\mathbf S_n)) \ | \ (\mathbf S_n)\subseteq \omega(\dist_{\mathbf L})\cap X \ \text{and the limit exists}\right\}  \\ \notag
            &= \mathrm{co}\left\{\underset{\mathbf S_n \rightarrow \mathbf S}{\mathrm{lim}} \mathrm{proj}_{T_{\mathbf S}X}(\nabla \dist_{\mathbf L}(\mathbf S_n)) \ | \ (\mathbf S_n)\subseteq \omega(\dist_{\mathbf L})\cap X \ \text{and the limit exists}\right\}  \\
            &\label{subdiff3}= \mathrm{proj}_{T_{\mathbf S}X}\mleft(\mathrm{co}\left\{\underset{\mathbf S_n \rightarrow \mathbf S}{\mathrm{lim}}\nabla \delta_{\mathbf L}(\mathbf S_n) \ | \ (\mathbf S_n)\subseteq \omega(\dist_{\mathbf L})\cap X \ \text{and the limit exists}\right\}\mright),
        \end{align}
        by linearity of the projection. 
        
        Now let $U_{\mathbf S}\subseteq G(k,n)$ be a small neighbourhood of $\mathbf S$ and consider the disjoint union of connected components 
        \begin{align}
            U_{\mathbf S} \setminus (\mathbf L \cup \mathrm{cut}(\mathbf L)) = \bigsqcup_{i=1}^r A_i.
        \end{align}
        The function $\dist_{\mathbf L}$ is smooth on each $A_i$ and we can define
        \begin{align}
            V_i := \big\{ \underset{\mathbf S_n \rightarrow \mathbf S}{\mathrm{lim}}\nabla \delta_{\mathbf L}(\mathbf S_n) \ | \ (\mathbf S_n)\subseteq A_i \big\}
        \end{align}
        for $i=1,\dots,r$. For $U_{\mathbf S}$ small enough we have that 
        \begin{align}
            \partial_{\mathbf S}\dist_{\mathbf L} = \mathrm{co}\{V_1, \dots, V_r\}.
        \end{align}
        By transversality, for every $i=1,\dots,r$ the intersection $X \cap A_i$ is non empty and there exists a sequence $(\mathbf S_n)_n\subseteq X \cap A_i$ that converges to $\mathbf S$. It follows that 
        \begin{align}
            &\mathrm{co}\left\{\underset{\mathbf S_n \rightarrow \mathbf S}{\mathrm{lim}}\nabla \dist_{\mathbf L}(\mathbf S_n) \ | \ (\mathbf S_n)\subseteq \omega(\dist_{\mathbf L})\cap X \ \text{and the limit exists}\right\}  \\ =&\,\mathrm{co}\left\{\underset{\mathbf S_n \rightarrow \mathbf S}{\mathrm{lim}}\nabla \dist_{\mathbf L}(\mathbf S_n) \ | \ (\mathbf S_n)\subseteq \bigsqcup_{i=1}^r A_i\cap X \ \text{and the limit exists}\right\}  \\ =&\,\mathrm{co}\left\{\mathrm{proj}_{T_{\mathbf S}X}(V_1 \cup \dots \cup V_r)\right\} = 
            \mathrm{proj}_{T_{\mathbf S}X}(\mathrm{co}\{V_1,\dots,V_r\}) 
            \\ =&\,\mathrm{proj}_{T_{\mathbf S}X}(\partial_{\mathbf S}\delta_{\mathbf L}).
        \end{align}
        As a consequence, from \eqref{subdiff3} we obtain
        \begin{align}
            \partial_{\mathbf S}(\dist_{\mathbf L}|_X) = \mathrm{proj}_{T_{\mathbf S}X}(\partial_{\mathbf S}\dist_{\mathbf L}).
        \end{align}
        
    \end{proof}
Recalling our definition of critical points (\cref{def:subdifferential}), from \cref{subdiffprojectionlemma} it immediately follows a characterization of the critical points of the restricted distance function. 
\begin{corollary}\label{criticallemma}
        Let $X$ be a smooth submanifold of $G(k,n)$. For a generic $\mathbf L \in G(k,n)$, a point $\mathbf S \in X$ is critical for $\dist_{\mathbf L}|_X$ if and only if $N_{\mathbf S}X \cap \partial_{\mathbf S}\dist_{\mathbf L} \neq \emptyset$.
    \end{corollary}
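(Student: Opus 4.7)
The proof is a short chain of equivalences that follows immediately from the preceding Lemma~\ref{subdiffprojectionlemma} together with the elementary fact that the kernel of an orthogonal projection onto a subspace is its orthogonal complement.

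The plan is as follows. First I would unpack the definition: by \cref{def:subdifferential}, the point $\mathbf{S}\in X$ is critical for $\dist_{\mathbf{L}}|_X$ precisely when $0\in\partial_{\mathbf{S}}(\dist_{\mathbf{L}}|_X)$. Under the genericity hypothesis on $\mathbf{L}$, \cref{subdiffprojectionlemma} applies and gives
\[
\partial_{\mathbf{S}}(\dist_{\mathbf{L}}|_X) = \mathrm{proj}_{T_{\mathbf{S}}X}\bigl(\partial_{\mathbf{S}}\dist_{\mathbf{L}}\bigr).
\]
Hence criticality is equivalent to $0\in\mathrm{proj}_{T_{\mathbf{S}}X}(\partial_{\mathbf{S}}\dist_{\mathbf{L}})$, i.e. to the existence of some $v\in\partial_{\mathbf{S}}\dist_{\mathbf{L}}$ with $\mathrm{proj}_{T_{\mathbf{S}}X}(v)=0$.

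Next I would use that the orthogonal projection $\mathrm{proj}_{T_{\mathbf{S}}X}\colon T_{\mathbf{S}}G(k,n)\to T_{\mathbf{S}}X$ has kernel equal to $N_{\mathbf{S}}X$, by definition of the normal space. Therefore $\mathrm{proj}_{T_{\mathbf{S}}X}(v)=0$ if and only if $v\in N_{\mathbf{S}}X$. Combining the two equivalences, $\mathbf{S}$ is critical for $\dist_{\mathbf{L}}|_X$ if and only if there exists $v\in\partial_{\mathbf{S}}\dist_{\mathbf{L}}\cap N_{\mathbf{S}}X$, which is exactly the statement $\partial_{\mathbf{S}}\dist_{\mathbf{L}}\cap N_{\mathbf{S}}X\neq\emptyset$.

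There is no real obstacle here: the corollary is a formal consequence of \cref{subdiffprojectionlemma}. The only point worth making explicit in the write-up is that we are allowed to invoke that lemma precisely because the hypothesis ``generic $\mathbf{L}$'' is inherited directly from it (ultimately from \cref{generictrasnversality}), so no new genericity argument is needed.
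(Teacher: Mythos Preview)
Your proposal is correct and matches the paper's approach exactly: the paper presents this corollary as an immediate consequence of \cref{subdiffprojectionlemma} and the definition of critical point, without even writing out a proof.
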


\subsection{The nearest point problem: ignoring the cut locus}\label{ignoringsection}Let us now go back to the nearest point problem from a generic $\mathbf{L}$ to a submanifold $X\subset G(k,n)$. The structure of the subdifferential of $\dist_{\mathbf{L}}$ complicates the study of critical points of $\dist_{\mathbf{L}}|_X$, even for a generic $\mathbf{L}$. However, in this section we show that, if we are truly interested in the minimization problem, this becomes again a standard  minimization problem with smooth constraint (for instance, one can use the Lagrange Multiplier's Rule).
    
 More precisely, the next  result shows that whenever the transversality condition holds, the minimum of $\dist_{\mathbf L}|_X$ cannot be on $\mathrm{cut}(\mathbf L)\cap X$ (which is where nondifferentiability points of $\dist_{\mathbf L}|_X$ can occur). Remark that this need not be true without the transversality hypothesis; this is the case for example of smooth submanifolds contained in the cut locus. 
\begin{theorem}\label{nominimacut}
    Let $\mathbf L \in G(k,n)$ and $X \subseteq G(k,n)$ be a smooth submanifold. Assume that $X$ is transverse to $\mathrm{cut}(\mathbf L)$. Then no point in $X \cap \mathrm{cut}(\mathbf L)$ can be a (local) minimum for the function $\dist_{\mathbf L}|_X$.  
\end{theorem}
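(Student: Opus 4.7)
The plan is to argue by contradiction: suppose $\mathbf{S}\in X\cap\mathrm{cut}(\mathbf{L})$ is a local minimum of $\dist_{\mathbf{L}}|_X$, and derive a clash with \cref{subdiffcoro} using the transversality hypothesis. Since $\mathrm{cut}(\mathbf{L})=\bigsqcup_j\Omega(j)$, necessarily $\mathbf{S}\in\Omega(j)$ for some $j\geq 1$. The key is to probe $\mathbf{S}$ against each of the minimizing geodesics $\gamma_W$ from $\mathbf{L}$ to $\mathbf{S}$ produced by \cref{preimagelemma}, and to exploit the first variation of length together with the smoothness of $\dist_{\mathbf{L}}$ restricted to $\Omega(j)$.

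I would first extract a strong first-order condition from the local minimality. For any smooth path $\mathbf{E}^s\subset G(k,n)$ with $\mathbf{E}^0=\mathbf{S}$ and $\dot{\mathbf{E}}^0=w$, fix an endpoint-variation $\gamma_W^s$ of $\gamma_W$ with $\gamma_W^s(1)=\mathbf{E}^s$. The first variation of length for a geodesic gives $L(\gamma_W^s)=\dist(\mathbf{L},\mathbf{S})+s\langle \xi_W,w\rangle+O(s^2)$, where $\xi_W:=\dot\gamma_W(1)/\dist(\mathbf{L},\mathbf{S})$ is the unit vector at $\mathbf{S}$ tangent to $\gamma_W$ and pointing away from $\mathbf{L}$. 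Now $\dist_{\mathbf{L}}(\mathbf{E}^s)\leq L(\gamma_W^s)$ by the definition of distance, with equality at $s=0$. If $\mathbf{S}$ were a local minimum of $\dist_{\mathbf{L}}|_X$ and $\mathbf{E}^s\subset X$, then $\dist_{\mathbf{L}}(\mathbf{E}^s)\geq\dist(\mathbf{L},\mathbf{S})$ for small $|s|$, so $s\langle\xi_W,w\rangle+O(s^2)\geq 0$ for both signs of $s$. Since $-w\in T_{\mathbf{S}}X$ as well, this forces $\langle\xi_W,w\rangle=0$ for every $w\in T_{\mathbf{S}}X$ and every $W\in O(j)$, i.e.\ $\xi_W\in N_{\mathbf{S}}X$.

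The second ingredient is the fact that $\dist_{\mathbf{L}}|_{\Omega(j)}$ is smooth at $\mathbf{S}$: near $\mathbf{S}$ one has $\dist_{\mathbf{L}}^2|_{\Omega(j)}=\sum_{i=1}^{k-j}\theta_i^2+j(\pi/2)^2$, and the angles $\theta_i$ with $i\leq k-j$ are simple and strictly less than $\pi/2$, so they depend smoothly on the plane. For a curve $\mathbf{E}^s\subset\Omega(j)$ the $O(j)$-family of minimizers of \cref{preimagelemma} varies smoothly with $\mathbf{E}^s$, so the endpoint-variation $\gamma_W^s$ can be chosen minimizing; then $L(\gamma_W^s)=\dist_{\mathbf{L}}(\mathbf{E}^s)$, and comparing the first-order expansion with the smooth derivative of $\dist_{\mathbf{L}}|_{\Omega(j)}$ yields $\langle \xi_W,w\rangle=\langle \nabla(\dist_{\mathbf{L}}|_{\Omega(j)})(\mathbf{S}),w\rangle$ for every $w\in T_{\mathbf{S}}\Omega(j)$, independently of $W$. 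In particular $\xi_W-\xi_{W'}\in N_{\mathbf{S}}\Omega(j)$ for all $W,W'\in O(j)$.

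Putting the two pieces together, $\xi_W-\xi_{W'}\in N_{\mathbf{S}}X\cap N_{\mathbf{S}}\Omega(j)$. Transversality of $X$ with $\Omega(j)$ is equivalent to $N_{\mathbf{S}}X\cap N_{\mathbf{S}}\Omega(j)=\{0\}$, hence $\xi_W=\xi_{W'}$ for all $W,W'$. This contradicts \cref{subdiffcoro}: the subdifferential $\partial_{\mathbf{S}}\dist_{\mathbf{L}}$ is linearly isomorphic to the $j^2$-dimensional convex body $\mathrm{co}(O(j))$ (with $j\geq 1$), and the explicit formula of \cref{subdiffpropo} already gives $\xi_I\neq\xi_{-I}$ since $B_I-B_{-I}$ is nonzero. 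The most delicate step is the smooth deformation of the minimizing geodesics along $\Omega(j)$ used in the second ingredient: it amounts to showing that the $O(j)$-parameterization of \cref{preimagelemma} is locally smooth in the basepoint inside the smooth Schubert stratum $\Omega(j)$, which is where the precise structure of principal vectors at planes with exactly $j$ right angles with $\mathbf{L}$ really enters the argument.
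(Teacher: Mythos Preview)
Your proof is correct and takes a genuinely different route from the paper's. The paper argues constructively: it fixes a single minimizing geodesic $\gamma$ from $\mathbf{E}$ to $\mathbf{L}$, checks by an explicit principal--vector computation that $\dot\gamma(0)\notin T_{\mathbf{E}}\Omega(j)$, and then uses transversality of $X$ with $\Omega(j)$ to produce one direction $v\in T_{\mathbf{E}}X$ with $\langle v,\dot\gamma(0)\rangle>0$; the first variation of arclength along that direction immediately gives a strict decrease of $\dist_{\mathbf{L}}|_X$. Your argument instead uses the whole $O(j)$--family $\{\xi_W\}$, forces them into $N_{\mathbf{S}}X\cap\bigl(\xi_{W_0}+N_{\mathbf{S}}\Omega(j)\bigr)$, and closes by a dimension count via \cref{subdiffcoro}. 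What your approach buys is a clean conceptual tie to the subdifferential structure theorems \cref{subdiffpropo}--\cref{subdiffcoro}; what the paper's buys is that one geodesic and one inequality suffice, with no appeal to the full subdifferential.

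One remark on your second ingredient. The first $k-j$ principal angles need not be simple, so the clause ``simple and strictly less than $\tfrac{\pi}{2}$'' is inaccurate, and the smooth deformation of the $O(j)$--family along $\Omega(j)$ that you flag as ``most delicate'' is indeed not immediate. You can bypass it entirely: the claim $\xi_W-\xi_{W'}\in N_{\mathbf{S}}\Omega(j)$ follows directly from the explicit form of $B_W$ in \cref{subdiffpropo} together with the description of $T_{\mathbf{S}}\Omega(j)$ obtained from \eqref{tangentschubert}. Concretely, the linear maps $\varphi_{B_W}:\mathbf{S}\to\mathbf{S}^\perp$ all agree on $(\mathbf{S}\cap\mathbf{L}^\perp)^\perp\cap\mathbf{S}$ (they send each $q_i$ with $i\le k-j$ to $\theta_i n_i$, independently of $W$), while on $\mathbf{S}\cap\mathbf{L}^\perp$ they take values in $\mathbf{L}\cap\mathbf{S}^\perp$, which is orthogonal to $\mathbf{S}^\perp\cap\mathbf{L}^\perp$. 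Since $\varphi\in T_{\mathbf{S}}\Omega(j)$ means $\varphi(\mathbf{S}\cap\mathbf{L}^\perp)\subseteq\mathbf{S}^\perp\cap\mathbf{L}^\perp$, one gets $\langle\varphi_{B_W}-\varphi_{B_{W'}},\varphi\rangle=0$ for all such $\varphi$. With this direct verification your argument is complete and avoids the deformation issue.
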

\begin{proof}
    Let $\mathbf E \in  X \cap \mathrm{cut}(\mathbf L)$ and assume in particular that $\mathbf E \in \Omega(j)$; remark that $X$ need not intersect all the strata of $\mathrm{cut}(\mathbf L)$. Denote by $\{p_1,\dots,p_k\}$ and $\{q_1,\dots,q_k\}$ some orthonormal basis of principal vectors for $\mathbf L$ and $\mathbf E$, with principal angles $0\leq\theta_1\leq\dots\leq\theta_{k-j}< \frac{\pi}{2}$ and the last $j$ ones being $\frac{\pi}{2}$. Let $\gamma:[0,1]\longrightarrow G(k,n)$ be the lenght minimizing geodesic such that $\gamma(0)=\mathbf E$ and $\gamma(1)=\mathbf L$. For $i=1,\dots,k-j$ let $n_i \in \langle q_i,p_i\rangle$ be the unit vector orthogonal to $q_i$ such that $p_i = q_i \cos(\theta_i) + n_i \sin(\theta_i)$. Then one can check that, under the isomorphism from \cref{def:hom}, $\dot{\gamma}(0)$ corresponds to the linear map $\varphi : \mathbf E \longrightarrow \mathbf E^{\perp}$ which acts like 
    \begin{align}
        \varphi(q_1)&= \theta_1 \,n_1 \\
        & \ \ \vdots \\
        \varphi(q_{k-j})&= \theta_{k-j}\,n_{k-j} \\
        \varphi(q_{k-j+1})&= \frac{\pi}{2}\,p_{k-j+1} \\ 
        & \ \ \vdots \\
        \varphi(q_k)&= \frac{\pi}{2}\,p_k.
    \end{align}
Given \eqref{cutstrataexplicit} and \eqref{tangentschubert}, we have that 
\begin{align}\label{tangentcut}
    T_{\mathbf E}\Omega(j)=\{\varphi:\mathbf E \longrightarrow \mathbf E^{\perp} \ | \ \varphi(\mathbf E \cap \mathbf L^{\perp})\subseteq \mathbf E^{\perp}\cap \mathbf L^{\perp}\}.
\end{align}
By construction $\mathbf E \cap \mathbf L^{\perp} = \langle q_{k-j+1},\dots,q_k\rangle$ and this is mapped to $\langle p_{k-j+1},\dots,p_k\rangle \subseteq \mathbf L$. Therefore $\dot{\gamma}(0) \notin T_{\mathbf E}\mathrm{cut}(\mathbf L)$. Since by hypothesis $X$ is transverse to $\Omega(j)$ there exists a vector $v \in T_{\mathbf E}X$ such that the scalar product $\langle v, \dot{\gamma}(0)\rangle >0$. Let $\sigma:(-\varepsilon,\varepsilon)\longrightarrow X$ be an adapted curve to $v$, i.e. $\sigma(0)=\mathbf E$ and $\dot{\sigma}(0)=v$. Then for $t>0$ small enough we have 
\begin{align}
    \dist_{\mathbf L}(\mathbf E) > \dist_{\mathbf L}(\sigma(t)),
\end{align}
which implies that $\mathbf E$ cannot be a local minimum for $\dist_{\mathbf L}$ on $X$. 
    
\end{proof}

As a consequence of \cref{nominimacut} and \cref{generictrasnversality}, we obtain the following corollary.
\begin{corollary}\label{ignorethecut}
    Let $X$ be a smooth submanifold of $G(k,n)$. Then for a generic $\mathbf L \in G(k,n)$ the minimizer of the distance function $\dist_{\mathbf L}|_X$ cannot be on $X \cap \mathrm{cut}(\mathbf L)$.
\end{corollary}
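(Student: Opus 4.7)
The proof plan is almost immediate because the heavy lifting has been done in the two preceding results. My strategy is to combine \cref{generictrasnversality} and \cref{nominimacut} into a one-line deduction, being careful only about what ``generic'' means and about the passage from ``local minimum'' to ``minimizer''.

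First, I would invoke \cref{generictrasnversality} to conclude that there is a full-measure (indeed, open dense) set $U \subseteq G(k,n)$ such that for every $\mathbf L \in U$, the smooth submanifold $X$ is transverse to each stratum $\Omega(j)$ of $\mathrm{cut}(\mathbf L)$. Fix such an $\mathbf L$. Next, I would observe that a global minimizer of $\dist_{\mathbf L}|_X$ is in particular a local minimum, so it suffices to rule out $X \cap \mathrm{cut}(\mathbf L)$ as a locus of local minima. Since $\mathrm{cut}(\mathbf L) = \bigsqcup_{j=1}^k \Omega(j)$ and $X$ is transverse to each $\Omega(j)$, \cref{nominimacut} applies to every stratum and tells us that no point of $X \cap \Omega(j)$ is a local minimum of $\dist_{\mathbf L}|_X$ for any $j \in \{1,\dots,k\}$. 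Taking the union over $j$ gives the claim.

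There is essentially no main obstacle: the transversality hypothesis of \cref{nominimacut} is arranged by \cref{generictrasnversality}, and the stratification \eqref{cutstrata} lets us apply \cref{nominimacut} one stratum at a time. The only mild subtlety worth flagging is that \cref{nominimacut} is stated for a single stratum $\Omega(j)$, so I would explicitly note that if $\mathbf E \in X \cap \mathrm{cut}(\mathbf L)$ then $\mathbf E$ lies in exactly one stratum $\Omega(j_0)$, and transversality of $X$ to that particular stratum is all that is needed to run the argument of \cref{nominimacut}. This completes the proof without any further computation.
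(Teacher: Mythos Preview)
Your proposal is correct and follows exactly the paper's approach: the paper simply states the corollary ``as a consequence of \cref{nominimacut} and \cref{generictrasnversality}'' with no further argument. The only minor remark is that \cref{nominimacut} is already stated for the whole cut locus (under transversality to every stratum), so the stratum-by-stratum breakdown you sketch is not needed explicitly---but it does no harm.
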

\begin{remark}\cref{ignorethecut} is one of the conceptual key results of this work. Indeed, unlike in the algebraic Euclidean setting, it is not true that the number of critical points of $\dist_{\mathbf L}|_X$ for the generic $\mathbf L$ is finite for every smooth submanifold $X$. In \cref{Schubertsection} we will exhibit an example of a Schubert variety admitting infinitely many global maxima (and therefore critical points) for the generic $\mathbf L \in G(k,n)$, such that each of these maxima is on the smooth stratum of the variety. Nevertheless, all these maxima will be on the cut locus of the point $\mathbf L$. We will see in \cref{discretenesssection} that this is not by chance: as soon as we exclude points lying on the cut locus, we generically obtain only a finite amount of critical points. Thanks to \cref{ignorethecut}, we know that only these finitely many points will matter for nearest point problem, as the minimum will be found among them. 
\end{remark}

\subsection{Discreteness of critical points away from the cut locus}\label{discretenesssection}
Under the hypothesis of \cref{criticallemma} we have that a point $\mathbf S \in X$ is critical for $\dist_{\mathbf L}|_X$ if and only if the normal space to $X$ at $\mathbf S$ meets the subdifferential $\partial_{\mathbf S}\dist_{\mathbf L}$. If $\mathbf S \notin \mathrm{cut}(\mathbf L)$ and $\mathbf S \neq \mathbf L$, then as we already remarked the function $\dist_{\mathbf L}$ is smooth at $\mathbf S$ and its subdifferential coincides with the gradient, which is the tangent vector to the unique length minimizing geodesic connecting $\mathbf L$ to $\mathbf S$ at $\mathbf S$. This gives the following corollary.
\begin{corollary}\label{smoothcritical}
    For a smooth submanifold $X \subseteq G(k,n)$ and a generic $\mathbf L \in G(k,n)$, if $\mathbf S \notin \mathrm{cut}(\mathbf L)$ and $\mathbf S \neq \mathbf L$, then $\mathbf S$ is critical for $\dist_{\mathbf L}|_X$ if and only if the unique length minimizing geodesic from $\mathbf L$ to $\mathbf S$ is normal to $X$ at $\mathbf S$.
\end{corollary}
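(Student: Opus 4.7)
The plan is to string together the two preceding results, \cref{criticallemma} and \cref{propo:distsmooth}, since the corollary is essentially their combination under the additional smoothness assumption on the point $\mathbf{S}$. The proof is short and conceptually routine; most of the work has already been done.

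First I would apply \cref{criticallemma}, which, for the generic $\mathbf L\in G(k,n)$ (relying on the transversality of $X$ with $\mathrm{cut}(\mathbf L)$ guaranteed by \cref{generictrasnversality}), characterizes the criticality of $\mathbf S\in X$ for $\dist_{\mathbf L}|_X$ by the condition $N_{\mathbf S}X \cap \partial_{\mathbf S}\dist_{\mathbf L}\neq \emptyset$. This reduces the problem to understanding $\partial_{\mathbf S}\dist_{\mathbf L}$.

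Second, I would exploit the hypothesis $\mathbf S\notin \mathrm{cut}(\mathbf L)\cup\{\mathbf L\}$. By \cref{propo:distsmooth}, $\dist_{\mathbf L}$ is smooth at $\mathbf S$, so the subdifferential collapses to a single vector, $\partial_{\mathbf S}\dist_{\mathbf L}=\{\nabla \dist_{\mathbf L}(\mathbf S)\}$. The intersection condition therefore becomes the plain inclusion $\nabla \dist_{\mathbf L}(\mathbf S)\in N_{\mathbf S}X$. The same proposition further identifies $\nabla \dist_{\mathbf L}(\mathbf S)$ with the tangent vector at $\mathbf S$ to the unique length--minimizing geodesic $\gamma$ from $\mathbf L$ to $\mathbf S$. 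Hence criticality translates exactly into $\dot{\gamma}\in N_{\mathbf S}X$ at the endpoint, i.e.\ $\gamma$ meets $X$ orthogonally at $\mathbf S$, which is the statement to be proved.

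There is no real obstacle beyond bookkeeping, but I would highlight two subtle points in the write--up: (i) the genericity of $\mathbf L$ is invoked only to apply \cref{criticallemma}, through the transversality with the stratified cut locus; and (ii) the assumption $\mathbf S\notin \mathrm{cut}(\mathbf L)\cup\{\mathbf L\}$ is precisely what forces the subdifferential to degenerate to a singleton, allowing the classical smooth Lagrange--multiplier interpretation of the critical point condition to take over.
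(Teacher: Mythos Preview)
Your proposal is correct and follows essentially the same argument as the paper: the paper invokes \cref{criticallemma} to reduce criticality to $N_{\mathbf S}X \cap \partial_{\mathbf S}\dist_{\mathbf L}\neq\emptyset$, and then uses \cref{propo:distsmooth} to identify $\partial_{\mathbf S}\dist_{\mathbf L}$ with the single gradient vector, which is the tangent at $\mathbf S$ to the unique minimizing geodesic. Your write--up is slightly more explicit about where genericity enters, but the logic is identical.
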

If instead $\mathbf S \in \mathrm{cut}(\mathbf L)$, being critical for $\dist_{\mathbf L}|_X$ does not imply the existence of a geodesic from $\mathbf L$ to $\mathbf S$ being normal to $X$. Indeed, in that case the subdifferential is just the convex hull of the tangent vectors to length minimizing geodesics between $\mathbf L$ and $\mathbf S$ (see \cref{subdifferentialsection}). This means that it can intersect the normal space to $X$ at $\mathbf S$ even if no geodesic is normal. For example, this phenomenon will appear in the specific example that we will describe in \cref{Schubertsection}, when dealing with global maxima of the restricted distance function: no geodesic from the external $\mathbf L$ to a global maximum will be orthogonal to the submanifold, but all of them are in the cut locus. \\

We want now to prove that the set of critical points not on the cut locus is generically finite (\cref{finitecritical} below). Let $X$ be a smooth submanifold of $G(k,n)$. In view of \cref{smoothcritical}, consider the normal bundle to $X$, denoted as $NX$. Consider the \textit{normal exponential map}, which is the restriction of the exponential map to $NX$:
\begin{align}
    \exp|_{NX}: NX \longrightarrow G(k,n).
\end{align}
    It is a smooth map, given the smoothness of the exponential map itself. Then by Sard's Lemma the set of critical values of $\exp|_{NX}$ has measure zero in $G(k,n)$: in other words, a generic $\mathbf L \in G(k,n)$ is a regular value of $\exp|_{NX}$. It is then immediate to prove the following.
\begin{lemma}\label{normalexplemma}
    Let $\mathbf L \in G(k,n)$ be generic. Then the preimage $(\exp|_{NX})^{-1}(\mathbf L)$ is discrete.
\end{lemma}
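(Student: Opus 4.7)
The plan is to exploit a dimension count and apply Sard's Lemma together with the Inverse Function Theorem. The key observation is that the total space $NX$ of the normal bundle and the target $G(k,n)$ have the same dimension: indeed, for any $\mathbf{S} \in X$ the fiber $N_{\mathbf{S}}X$ has dimension $\dim G(k,n) - \dim X$, hence
\[
\dim(NX) = \dim(X) + (\dim G(k,n) - \dim X) = \dim G(k,n).
\]
Thus $\exp|_{NX}: NX \to G(k,n)$ is a smooth map between manifolds of equal dimension.

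Next I would invoke Sard's Lemma, exactly as the authors suggest in the paragraph preceding the statement, to conclude that the set of critical values of $\exp|_{NX}$ has measure zero, so the generic $\mathbf{L} \in G(k,n)$ is a regular value. At each preimage point $(\mathbf{S}, v) \in (\exp|_{NX})^{-1}(\mathbf{L})$ the differential $D_{(\mathbf{S},v)}(\exp|_{NX})$ is therefore surjective; but surjectivity between vector spaces of equal dimension forces it to be a linear isomorphism.

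Finally, by the Inverse Function Theorem, $\exp|_{NX}$ is a local diffeomorphism at each such point $(\mathbf{S}, v)$. In particular $(\mathbf{S}, v)$ is an isolated point of the fiber, and therefore the preimage $(\exp|_{NX})^{-1}(\mathbf{L})$ is discrete, as claimed.

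There is no real obstacle here: the argument is entirely formal, combining the dimension equality, Sard, and the Inverse Function Theorem. The only thing to double--check is the elementary dimension count for $NX$, which is why the authors describe the result as ``immediate''.
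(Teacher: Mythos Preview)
Your proof is correct and follows essentially the same approach as the paper: Sard's Lemma plus the equidimensionality $\dim NX = \dim G(k,n)$. The only cosmetic difference is that the paper phrases the conclusion via the Regular Value Theorem (the preimage is a zero--dimensional submanifold) rather than the Inverse Function Theorem, but these are equivalent here.
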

\begin{proof}
    By Sard's Lemma $\mathbf L$ is a regular value of $\exp|_{NX}$. Observe that the bundle $NX$ and the manifold $G(k,n)$ have the same dimension. It follows that $(\exp|_{NX})^{-1}(\mathbf L)$ is a zero--dimensional submanifold. Discreteness follows.
\end{proof}
\begin{theorem}\label{finitecritical}
    Let $X$ be a smooth compact submanifold of $G(k,n)$ and $\mathbf L \in G(k,n)$ be generic. Then the set of critical points of $\dist_{\mathbf L}|_X$ on $X \setminus \mathrm{cut}(\mathbf L)$ is finite. The same holds true if $X$ is subanalytic (possibly non compact).
\end{theorem}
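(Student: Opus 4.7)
The plan is to apply Corollary \ref{smoothcritical} to translate the critical point condition on $X \setminus \mathrm{cut}(\mathbf L)$ into a statement about the normal exponential map $\exp|_{NX} \colon NX \to G(k,n)$, then combine Lemma \ref{normalexplemma} with either compactness (in the smooth case) or o--minimality (in the subanalytic case) to pass from discreteness to finiteness.

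First I would fix a generic $\mathbf L$, using Lemma \ref{generictrasnversality} to make $X$ transverse to $\mathrm{cut}(\mathbf L)$ and Sard's Lemma to make $\mathbf L$ a regular value of $\exp|_{NX}$. By Corollary \ref{smoothcritical}, each critical point $\mathbf S \in X \setminus \mathrm{cut}(\mathbf L)$ is the base point $\pi(v)$ of a unique $v \in N_{\mathbf S} X$ with $\exp_{\mathbf S}(v) = \mathbf L$ and $\|v\| = \dist(\mathbf L, \mathbf S) \leq \mathrm{diam}(G(k,n))$, uniqueness coming from the fact that $\mathbf S \notin \mathrm{cut}(\mathbf L)$ admits a unique minimizing geodesic from $\mathbf L$. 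Hence the critical set is in bijection with a subset $P \subseteq (\exp|_{NX})^{-1}(\mathbf L)$ contained in the closed disk subbundle of $NX$ of radius $D := \mathrm{diam}(G(k,n)) < \infty$, and $P$ is discrete by Lemma \ref{normalexplemma}.

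If $X$ is compact, then this disk subbundle is compact, and a discrete subset of a compact Hausdorff space is finite---so $P$, and hence the critical set, is finite. If $X$ is only subanalytic, I would drop compactness and exploit o--minimality instead: by Proposition \ref{ominimalitylemma} the function $\dist_{\mathbf L}$ is globally subanalytic, and by Proposition \ref{propo:distsmooth} it is smooth on the subanalytic open set $X \setminus (\{\mathbf L\} \cup \mathrm{cut}(\mathbf L))$, so the critical locus is a definable subset of $X \setminus \mathrm{cut}(\mathbf L)$. Since $\mathbf L$ is a regular value of $\exp|_{NX}$, the preimage $(\exp|_{NX})^{-1}(\mathbf L)$ is $0$-dimensional, and via the bijection above so is the critical set. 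A $0$-dimensional definable set is finite by cellular decomposition: its finitely many connected components are singletons.

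The main obstacle is the subanalytic case, where the topological compactness argument is unavailable; one must verify that the critical locus is genuinely definable in the o--minimal structure of globally subanalytic sets. This reduces to the definability of the gradient of a $C^1$ globally subanalytic function on a subanalytic open set, which is a standard feature of this structure but is the technical hinge of the argument in the non--compact setting.
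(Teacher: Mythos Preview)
Your proposal is correct and follows essentially the same route as the paper: translate the critical condition via Corollary~\ref{smoothcritical} into membership in $(\exp|_{NX})^{-1}(\mathbf L)$, use Lemma~\ref{normalexplemma} for discreteness, then conclude by compactness of the bounded disk bundle (the paper uses $R_{\frac{\pi}{2}}$ rather than the diameter) or by o--minimality in the subanalytic case. One small imprecision: a discrete subset of a compact Hausdorff space need not be finite unless it is also closed, but here the full preimage $(\exp|_{NX})^{-1}(\mathbf L)$ is closed, so its intersection with the compact disk bundle is finite and $P$ is a subset of it.
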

\begin{proof}
    By \cref{smoothcritical} a critical point $\mathbf S \in X \setminus (\mathrm{cut}(\mathbf L)\cup \mathbf L)$ is characterized by the geodesic connecting it to $\mathbf L$ being normal to $X$. Denoting such geodesic as $\gamma_{\mathbf S}:[0,1]\longrightarrow G(k,n)$ with $\gamma_{\mathbf S}(0)=\mathbf S$ and $\gamma_{\mathbf S}(1)=\mathbf L$, this means that 
    \begin{align}
        \big(\mathbf S, \dot{\gamma}_{\mathbf S}(0)\big) \in \exp|_{NX}^{-1}(\mathbf L)
    \end{align}
    for every such critical point $\mathbf S$. Remark that since $\exp_{\mathbf E}(R_{\frac{\pi}{2}})=G(k,n)$ for every $\mathbf E$, we can restrict the normal exponential map to normal vectors in $R_{\frac{\pi}{2}}$, which is compact in the tangent space at each point. Therefore if also $X$ is compact, the discreteness in \cref{normalexplemma} implies that only finitely many $\mathbf S$ away from $\mathrm{cut}(\mathbf L)$ can be critical for $\dist_{\mathbf L}|_X$. 
    
    If now $X$ is not compact but subanalytic, finiteness follow from o--minimality. \end{proof}
\subsection{The Grassmann Distance Complexity}
We will now use the results from previous sections, in particular \cref{finitecritical} and the o--minimality of principal angles and the Grassmann distance function (\cref{ominimalitylemma}), to prove that the number of critical points of $\dist_{\mathbf L}|_X$ which are not on $\mathrm{cut}(\mathbf L)$ is uniformly bounded for the generic $\mathbf L \in G(k,n)$. 
\begin{theorem}\label{uniformboundedness}
    Let $X$ be a smooth subanalytic submanifold of $G(k,n)$. There exists a constant $C_X\in \N$ such that for a generic $\mathbf L \in G(k,n)$ we have 
    \begin{align}\label{gdd}
        \#\{\mathbf S \in X\setminus \mathrm{cut}(\mathbf L) \ | \ \text{s.t.} \ \mathbf S \ \text{is critical for} \ \delta_{\mathbf L}|_X\} \leq C_X.
    \end{align}
\end{theorem}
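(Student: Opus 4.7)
The plan is to realize the set of critical points of $\dist_{\mathbf L}|_X$ lying off the cut locus as the fiber over $\mathbf L$ of a single globally subanalytic incidence relation in $G(k,n)\times X$, and then invoke the standard o--minimal principle that a definable family with generically finite fibers has a uniform bound on the cardinality of those finite fibers.

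First, I would introduce the incidence set
\[
W := \bigl\{ (\mathbf{L}, \mathbf{S}) \in G(k,n) \times X \,\bigm|\, \mathbf{S} \notin \{\mathbf{L}\}\cup \mathrm{cut}(\mathbf{L}),\ \nabla_{\mathbf S}\, \dist_{\mathbf{L}}(\mathbf{S}) \in N_{\mathbf{S}} X \bigr\}.
\]
By \cref{smoothcritical} combined with \cref{generictrasnversality}, for generic $\mathbf{L}$ the fiber $W_{\mathbf L}\subset X$ is precisely the set counted on the left--hand side of \eqref{gdd}, and by \cref{finitecritical} this fiber is finite for generic $\mathbf{L}$. I would then verify that $W$ is globally subanalytic: by \cref{ominimalitylemma} the Grassmann distance $\dist$ is globally subanalytic as a function on $G(k,n)\times G(k,n)$, so the same is true of the cut--locus condition $\theta_k(\mathbf L,\mathbf S)=\tfrac{\pi}{2}$; the submanifold $X$ is subanalytic by hypothesis, hence its tangent and normal bundles form subanalytic families of subspaces; and on the (definable) open set where $\dist_{\mathbf L}$ is $C^1$ in the second variable, the gradient $\nabla_{\mathbf S}\dist_{\mathbf L}(\mathbf S)$ is a subanalytic section in both variables, since the o--minimal structure of globally subanalytic sets is closed under partial differentiation on open domains of smoothness. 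Combining these ingredients, $W$ is a globally subanalytic subset of $G(k,n)\times X$.

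Finally, I would invoke the classical o--minimal uniformity statement: for any definable family $W\subseteq T\times Y$ the function $t\mapsto \#W_t$ takes only finitely many finite values (plus possibly $+\infty$), which follows from the existence of a cell decomposition of $W$ adapted to the projection $W\to T$ (see e.g.\ van den Dries, \emph{Tame Topology and O--minimal Structures}, Chapter~3). Applied to the projection $W\to G(k,n)$ together with the generic finiteness supplied by \cref{finitecritical}, this yields a single constant $C_X$ bounding $\#W_{\mathbf L}$ uniformly over the (generic) locus where the fiber is finite, proving the theorem.

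The main technical obstacle lies in Step 2, i.e.\ in the joint subanalyticity of the section $(\mathbf L,\mathbf S)\mapsto \nabla_{\mathbf S}\dist_{\mathbf L}(\mathbf S)$ on its natural domain: this requires invoking the closure of the globally subanalytic structure under differentiation on open $C^1$ loci, and also checking that the ``off the cut locus'' condition is definable uniformly in $\mathbf L$ (which in turn rests squarely on \cref{ominimalitylemma}). Once these subanalyticity statements are in place, the uniform bound is a black--box application of o--minimal tameness.
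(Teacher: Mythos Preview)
Your proposal is correct and follows essentially the same approach as the paper: both arguments define the incidence set of pairs $(\mathbf L,\mathbf S)$ with $\mathbf S$ a critical point off the cut locus, establish its definability via \cref{ominimalitylemma}, and then invoke an o--minimal uniformity principle together with \cref{finitecritical}. The only cosmetic difference is that the paper cites the definable trivialization theorem (van den Dries, Chapter~9) rather than cell decomposition, but these tools yield the same conclusion here.
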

\begin{proof}
     Define the following set
     \begin{align}
         A =\{(\mathbf S,\mathbf L) \in X \times G(k,n) \ | \ \mathbf S \notin \mathrm{cut}(\mathbf L), \ \mathbf S \in \mathrm{Crit}(\delta_{\mathbf L}|_X)\},
     \end{align}
    where $\mathrm{Crit}(f)$ denotes the critical points of a locally Lipschitz function $f$. By \cref{ominimalitylemma} and the definability of $X$, it follows that also the set $A$ is definable. \\
    Consider the projection map on the second factor $\pi_2:A \longrightarrow G(k,n)$. By \cite[Chapter 9, Theorem 1.2]{VanDenDries}, there exist a partition of $G(k,n)$ as a finite disjoint union of definable sets
    \begin{align}
        G(k,n)=\bigsqcup_{i=1}^l M_i,
    \end{align}
    definable sets $F_1,\dots,F_l$ and definable maps $\varphi_i : F_i \times M_i \longrightarrow \pi_2^{-1}(M_i)$ such that the following diagram commutes
    \begin{center}
\begin{tikzcd}
F_i\times M_i \arrow[rd, "p_2"'] \arrow[rr, "\varphi_i"] &     & \pi_2^{-1}(M_i) \arrow[ld, "\pi_2"] \\
                                                         & M_i
                                                         &    
\end{tikzcd}.
\end{center}
    In particular for every $\mathbf L \in M_i$ we have $\pi_2^{-1}(\mathbf L) \cong F_i$, where the isomorphism is subanalytic. The fiber $\pi_2^{-1}(\mathbf L)$ describes the points in $X \setminus \mathrm{cut}(\mathbf L)$ that are critical for $\dist_{\mathbf L}|_X$. Therefore by \cref{finitecritical} for the generic $\mathbf L$ such fiber consists of finitely many points. Hence, if we define 
    \begin{align}
        \widetilde M := \bigcup_{\mathrm{dim}(F_i)=0}M_i,
    \end{align}
    we have that $G(k,n) \setminus \widetilde M$ has codimension at least $1$ in $G(k,n)$. For every $i$ such that $\mathrm{dim}(F_i)=0$ we have that $\lvert F_i \rvert = c_i < +\infty$. Define $C_X := \max\{c_i \ | \ \mathrm{dim}(F_i)=0\}$. Then for every $\mathbf L \in \widetilde M$ it holds that 
    \begin{align}
        \#\{\mathbf S \in X\setminus \mathrm{cut}(\mathbf L) \ | \ \text{s.t.} \ \mathbf S \ \text{is critical for} \ \dist_{\mathbf L}|_X\} \leq C_X
    \end{align}
     and the proof is concluded.
\end{proof}
The meaning of the constant $C_X$ in \cref{uniformboundedness} is the following. When we are looking for solutions to our problem of minimizing $\dist_{\mathbf L}|_X$ for a generic $\mathbf L$, we already observed that one way to proceed is to first look for critical points and then find the minimum among them. \cref{nominimacut} tells us that we do not need to care about critical points that lie on the cut locus of $\mathbf L$. Therefore the constant $C_X$ serves as a bound on the number of points that we will have to check in order to find the minimum. In other words, it bounds the \emph{complexity} of the minimization problem for the submanifold $X$. In the same spirit that led to the definition of the Euclidean Distance Degree for algebraic subvarieties of $\R^n$, we introduce the following notion. 
\begin{definition}[Grassmann Distance Complexity]\label{GDDdef}
    Let $X$ be a smooth subanalytic submanifold of $G(k,n)$. We define the \textit{Grassmann Distance Complexity} of $X$ to be the maximum of the left hand side in \eqref{gdd}. We will denote this as $\mathrm{GDC}(X)$.
\end{definition}
\begin{remark}
    Suppose that $X$ is a stratified submanifold of $G(k,n)$. As in the Euclidean setting, we will be interested in critical points lying in the smooth stratum of $X$. We can therefore extend \cref{GDDdef} to the stratified or singular case by defining $\mathrm{GDC}(X)$ as the Grassmann Distance Degree of the smooth stratum. 
\end{remark}
\subsection{Non--semialgebraicity}\label{sec:nonsemi} We discuss in this section an example showing that the nearest point to an algebraic variety in $G(k,n)$ is genuinely non--semialgebraic, i.e. that the critical points of  the Grassmann distance to a point constrained to an algebraic set are, in general, solutions of a system of non--algebraic equations.

For this example, we will work in the \emph{oriented Grassmannian} $\widetilde{G}(2,4)$, i.e. the Grassmannian of oriented $2$--planes in $\R^4.$ This is a double cover of $G(2,4)$, the quotient map simply being the map that forgets the orientation. The oriented Grassmannian embeds, via the spherical Pl\"ucker embedding, in the unit sphere in $\R^6$ as the quadric
\[\widetilde{G}(2,4)=\left\{(x, y)\in \R^3\oplus \R^3\,\bigg|\, \|x\|^2=\|y\|^2=1\right\}\simeq S^2\times S^2.\] 
The above discussion extends verbatim to the oriented Grassmannian. Up to a multiple, the Grassmann metric is the product metric of the two sphere factors (\cite{Kozlov}), therefore the (oriented) Grassmann distance is the function
\[\dist:\widetilde{G}(2, 4)\times \widetilde{G}(2,4)\to \R,\]
given for $(x, y), (z, w)\in \widetilde{G}(2, 4)\simeq S^2\times S^2$ by
\[\dist\left((x, y), (z, w)\right)=\sqrt{\left(\arccos\langle x, z\rangle\right)^2+\left(\arccos\langle y, w\rangle\right)^2}.\]
Let now $e_0=(1, 0, 0)\in \R^3$. Then, the Riemmanian sphere $S_R$ of radius $0<R<\frac{\pi}{2}$ centered at $\mathbf{L}:=(e_0, e_0)$ is given by
\[S_R=\left\{(x, y)\in \R^3\oplus \R^3\,\bigg|\, \|x\|^2=\|y\|^2=1, \,
(\arccos x_1)^2+(\arccos y_1)^2=R^2)\right\}.
\]
(Notice that this \emph{is not} an algebraic set in $\R^6$.) The cut locus of $\mathbf{L}$ equals
\[\label{cutg24}\mathrm{cut}(\mathbf{L})=\left(\{-e_0\}\times S^2\right)\cup \left(S^2\times \{-e_0\}\right).\]

Consider now the semialgebraic family $\{X_\beta\}_{\beta>0}\subseteq \widetilde{G}(2,4)$ of algebraic sets defined by
\[X_\beta:=\left\{(x, y)\in \R^3\oplus \R^3\,\bigg|\, \|x\|^2=\|y\|^2=1,\, x_1-\beta y_1=0\right\}.\]
Each $X_\beta$ is a hypersurface in $\widetilde{G}(2, 4)$, given by a linear slice. 

The critical points of $\dist_\mathbf{L}|_{X_\beta}$ on $X_\beta\setminus \mathrm{cut}(\mathbf{L})$ are those points where the gradient of $\dist_\mathbf{L}$ is a multiple of the gradient of defining equation of $X_\beta$ in $S^2\times S^2$. Denoting by $\alpha:[-1, 1]\to \R$ the function
\[\alpha(w):=\frac{\arccos w}{\sqrt{1-w^2}},\]
we see that these are the points in $X_\beta$ such that the following matrix has rank less than $4$:
\[M:=\left(\begin{array}{cccc}x_1 & 0 & \alpha(x_1) & 1 \\x_2 & 0 & 0 & 0 \\x_3 & 0 & 0 & 0 \\0 & y_1 & \alpha(y_1) & -\beta \\0 & y_2 & 0 & 0 \\0 & y_3 & 0 & 0\end{array}\right).\]
The determinant of the $4\times 4$ matrix $M^\top M$ vanishes exactly when $M$ has rank at most $3$. This determinant equals $(x_2^2+x_3^2)(y_2^2+y_3^2)(\alpha(y_1)+\beta \alpha (x_1))^2$. Therefore, the critical points of $\dist_\mathbf{L}|_{X_\beta}$ not on the cut locus \eqref{cutg24} are the points $(x, y)$ in $\R^3\oplus \R^3$ solving the following system of equations:
\[\|x\|^2=\|y\|^2=1,\quad x_1=\beta y_1, \quad \frac{\arccos (y_1)}{\sqrt{1-y_1^2}}+\beta\frac{\arccos (\beta y_1)}{\sqrt{1-\beta^2y_1^2}}=0.\]
From this we see that the family of systems that we obtain, depending on the semialgebraic parameter $\beta>0$, is not a semialgebraic system. 

%

\subsection{Pfaffian bounds} \label{sec:pfaffian}
We discuss in this section how to obtain upper bounds on the Grassmann Distance Complexity of a smooth algebraic subvariety, depending explicitly on the format of its defining equations. This will be achieved using known bounds on the topology of the solution set of a systems of Pfaffian equations and inequalities. Our main reference will be \cite{Gabrielovpfaffian}. 

A \emph{Pfaffian chain} of order $r\geq 0$ and degree $\alpha\geq 1$ is an ordered sequence of analytic functions $\{f_1,\dots,f_r\}$ on an open domain $\mathcal{U} \subseteq \R^N$ such that 
\begin{align}
    \frac{\partial f_i}{\partial x_j}(x) = P_{ij}(x,f_1(x),\dots,f_i(x))
\end{align}
for every $i=1,\dots,r$ and $j=1,\dots,N$, where $P_{ij}(x_1,\dots,x_N,y_1,\dots,y_i)$ are polynomials with $\mathrm{deg}(P_{ij})\leq \alpha$. In other terms, a Pfaffian chain is an ordered sequence of analytic functions satisfying a triangular systems of polynomial PDEs in a given domain. A \emph{Pfaffian function} of order $r \geq 0$ and degrees $(\alpha, \beta)$ on a domain $\mathcal{U} \subseteq \R^N$ is a function $f$ such that 
\begin{align}
    f(x)=P(x,f_1(x),\dots,f_r(x)),
\end{align}
where $\{f_1,\dots,f_r\}$ is a Pfaffian chain of order $r$ and degree $\alpha$ on the domain $\mathcal{U}$ and $P(x_1,\dots,x_N,y_1,\dots,y_r)$ is a polynomial in $N+r$ variables with $\mathrm{deg}(P)\leq \beta$. The triple $(r,\alpha,\beta)$ is called the \emph{format} of the Pfaffian function $f$.
\begin{example}\label{cospfaffian}
    Let $\mathcal{U} =\{x \in \R \ | \ x \neq \pi + 2k\pi \ \forall k \in \Z\}$ and define $f_1(x):= \tan\big(\frac{x}{2}\big)$, $f_2(x):=\cos^2\big(\frac{x}{2}\big)$, $f_3(x):=\cos(x)$ and $f_4(x):=\sin(x)$. We have the following identities:
    \begin{align}
        \notag f'_1(x)&=\frac{1}{2}\big(1+f_1^2(x)\big) \\
        \notag f'_2(x)&=-f_1(x)f_2(x) \\
        \notag f'_3(x)&=-2f_1(x)f_2(x) \\
        \notag f'_4(x)&=f_3(x).
    \end{align}
It follows that $(f_1,f_2,f_3,f_4)$ is a Pfaffian chain of order $4$ and degree $2$ on $\mathcal{U}$.
\end{example}

We will need the following lemma, whose proof is immediate.
\begin{lemma}[\cite{Gabrielovpfaffian}, Lemma 2.5]\label{pfaffianderivative}
    A partial derivative of a Pfaffian function of format $(r,\alpha,\beta)$ is a Pfaffian function of format $(r,\alpha,\alpha+\beta-1)$.
\end{lemma}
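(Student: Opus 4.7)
The plan is to prove the statement by a direct application of the chain rule, exploiting the fact that the Pfaffian chain is preserved: the only thing that changes is the outer polynomial, and we only need to track its degree.

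First, I would write $f(x)=P(x,f_1(x),\dots,f_r(x))$ for a polynomial $P$ of degree at most $\beta$ in $N+r$ variables, with $\{f_1,\dots,f_r\}$ the associated Pfaffian chain of order $r$ and degree $\alpha$. Then, for any fixed $j\in\{1,\dots,N\}$, I would apply the chain rule to obtain
\[
\frac{\partial f}{\partial x_j}(x)=\frac{\partial P}{\partial x_j}(x,f_1(x),\dots,f_r(x))+\sum_{i=1}^{r}\frac{\partial P}{\partial y_i}(x,f_1(x),\dots,f_r(x))\,\frac{\partial f_i}{\partial x_j}(x).
\]
Next, I would substitute the defining Pfaffian relations $\partial f_i/\partial x_j = P_{ij}(x,f_1,\dots,f_i)$ to rewrite the right-hand side as $Q(x,f_1(x),\dots,f_r(x))$, where
\[
Q(x,y_1,\dots,y_r):=\frac{\partial P}{\partial x_j}(x,y_1,\dots,y_r)+\sum_{i=1}^{r}\frac{\partial P}{\partial y_i}(x,y_1,\dots,y_r)\,P_{ij}(x,y_1,\dots,y_i).
\]

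The degree check is then straightforward: the first summand has degree at most $\beta-1$; each term in the sum is a product of a polynomial of degree at most $\beta-1$ and one of degree at most $\alpha$, hence has degree at most $\alpha+\beta-1$. Since $\alpha\geq 1$, the overall degree of $Q$ is at most $\alpha+\beta-1$. Because the Pfaffian chain $\{f_1,\dots,f_r\}$ is unchanged, this exhibits $\partial f/\partial x_j$ as a Pfaffian function of format $(r,\alpha,\alpha+\beta-1)$, which is exactly the claim.

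There is no real obstacle here: the statement is essentially a bookkeeping exercise on the degrees of polynomials produced by the chain rule, and the only point worth emphasizing is that all substitutions stay within the same Pfaffian chain, so the order $r$ and chain degree $\alpha$ are preserved while the outer degree increases by at most $\alpha-1$.
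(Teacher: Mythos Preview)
Your proof is correct and is exactly the standard chain-rule bookkeeping argument. The paper does not give its own proof---it simply cites the lemma from \cite{Gabrielovpfaffian} and remarks that the proof is immediate---so your write-up is precisely the kind of direct verification the authors had in mind.
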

A \emph{basic semi--Pfaffian} set $S \subset \R^N$ in a domain $\mathcal{U}\subseteq \R^N$ is a set defined by sign conditions on a family of Pfaffian functions on a common Pfaffian chain in $\mathcal{U}$. A \emph{semi--Pfaffian set} is a finite union of basic semi-Pfaffian sets. (Note that the condition of having a common Pfaffian chain is not restrictive, as the concatenation of Pfaffian chains still gives a Pfaffian chain.)  The topology of semi--Pfaffian sets can be explicitly bounded in terms of the number and format of the Pfaffian functions involved. Several different bounds can be found in the literature (see also \cite{Zell}); we will use the following. 
\begin{theorem}[\cite{Gabrielovpfaffian}, Theorem 3.4]\label{GVbound}
    Let $S \subseteq \mathcal{U} \subseteq \R^N$ be a semi-Pfaffian set defined by a conjunction of equations and strict inequalities involving $s$ different Pfaffian functions having a common Pfaffian chain. Let $(r,\alpha,\beta)$ be the format of the Pfaffian functions. Then the sum of the Betti numbers of $S$, denoted $b(S)$, is bounded by 
    \begin{align}
        b(S) \leq s^N 2^{\frac{r(r-1)}{2}}\mathcal{O}\big(N\beta + \mathrm{min}\{N,r\}\alpha\big)^{N+r}.
    \end{align}
\end{theorem}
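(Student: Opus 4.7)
The plan is to establish the bound by reducing the estimate on the sum of Betti numbers to counting zeros of a Pfaffian system on a smooth compact manifold, and then invoking Khovanskii's theorem on the number of nondegenerate zeros of such systems. I will follow the strategy going back to Khovanskii and formalized by Gabrielov--Vorobjov.

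First, I would eliminate the strict inequalities by introducing auxiliary variables: for each defining function $g_j>0$ appearing in the description of $S$, I add a coordinate $u_j$ and replace the inequality by the equation $g_j-u_j^2=0$. This lifts $S$ to a set $\widetilde S\subseteq \R^{N+k}$ defined by equations alone, where $k$ is the number of strict inequalities, and the lift is generically a $2^k$--sheeted cover of $S$; in particular $b(S)$ is controlled by $b(\widetilde S)$. By intersecting with a ball of sufficiently large radius and slightly perturbing via Sard's theorem, I may further assume that $\widetilde S$ is compact and that its smooth locus is dense. Classical Morse inequalities then bound $b(\widetilde S)$ by the number of critical points of a generic linear height function restricted to the smooth part of $\widetilde S$.

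Next, I would characterize these critical points as nondegenerate zeros of a Pfaffian system. At a critical point the Lagrange multiplier condition forces a collection of $(m+1)\times(m+1)$ minors of an augmented Jacobian to vanish. By \cref{pfaffianderivative}, each partial derivative is itself Pfaffian of format $(r,\alpha,\alpha+\beta-1)$, so each such minor is a polynomial expression in a common Pfaffian chain of order $r$ with degrees controlled explicitly by $(\alpha,\beta)$. One thus obtains a system of $N+k$ Pfaffian equations in $N+k$ unknowns whose nondegenerate zero set dominates the Morse count. The combinatorial choice of which of the original $s$ functions to enforce as equations, together with the choice of Lagrange minors, contributes a factor bounded by $s^N$ times a universal constant.

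Finally, I would invoke Khovanskii's theorem on the maximal number of nondegenerate common zeros of a Pfaffian system of $N+k$ equations in $N+k$ variables, with chain of order $r$ and effective degrees determined by $(\alpha,\beta)$. This bound has the characteristic form $2^{r(r-1)/2}\,\mathcal{O}((N+k)\beta + \min\{N+k,r\}\alpha)^{N+k+r}$, reflecting the \emph{fewness} principle that Pfaffian systems admit far fewer zeros than a naive B\'ezout estimate would suggest. Aggregating over the combinatorial factors and the covering from the first step yields the stated inequality. The main obstacle is precisely the first reduction: strict inequalities, noncompactness and residual degeneracies must all be handled by careful deformation and approximation, while tracking exactly how each operation inflates the format $(r,\alpha,\beta)$ so that the final polynomial exponent $N+r$ and the Khovanskii constant $2^{r(r-1)/2}$ are preserved without loss.
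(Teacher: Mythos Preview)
The paper does not provide a proof of this statement: it is quoted verbatim from \cite{Gabrielovpfaffian} as Theorem~3.4 and is used as a black box in the proof of \cref{boundthm}. So there is no ``paper's own proof'' to compare your proposal against.

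Your outline is a reasonable sketch of the classical Khovanskii--Gabrielov--Vorobjov strategy (replace strict inequalities by equations via auxiliary squares, compactify, apply Morse theory, and bound the resulting Pfaffian critical-point system by Khovanskii's fewnomial theorem). If the goal were to actually supply a proof, the main concerns would be bookkeeping: you assert a final exponent of $N+r$, but your own intermediate bound carries the exponent $N+k+r$ from the $k$ auxiliary variables, and you have not explained how this is absorbed; likewise the passage from the $2^k$--sheeted cover and the combinatorial $s^N$ factor to the exact constants in the statement is asserted rather than verified. For the purposes of this paper, however, none of this matters---the theorem is an external citation, and you should simply invoke it rather than reprove it.
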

From the description of the exponential map in terms of tangent matrices given in \cref{geodesicprop}, the relation between principal angles and singular values given in \cref{boundedprop} and \cref{cospfaffian}, it follows that for an algebraic submanifold $X \hookrightarrow G(k,n)$ the critical points of $\delta_{\mathbf L}|_X$ not in $\mathrm{cut}(\mathbf L)$ admit a Pfaffian description. We will explain how to obtain such a description through the worked out example of a hypersurface. It will be evident how to generalize it to deal with any algebraic submanifold for which the degrees of the defining equations are known. 

Recall the Pl\"ucker embedding of Grassmannians $$\varphi: G(k,n) \hookrightarrow \mathds{P}(\Lambda^k\R^n)\cong \P^{N}$$ sending a plane $\mathbf E$ with orthonormal basis $\{v_1,\dots,v_k\}$ to the class of the wedge product $\varphi(\mathbf E)=[v_1 \wedge \dots \wedge v_k]$. If $\{e_1,\dots,e_n\}$ is the standard basis of $\R^n$, for any $w_1\wedge\dots\wedge w_k \in \R^{N+1}$ we write 
\begin{align}
    w_1\wedge\dots\wedge w_k =\sum_{1\leq i_1<\dots<i_k\leq n}c_{i_1,\dots,i_k}e_{i_1}\wedge\dots\wedge e_{i_k}
\end{align}
and the collection $(c_{i_1,\dots, i_k})$ define a system of coordinates on $\R^{N+1}$. The Pl\"ucker coordinates of $\mathbf E \in G(k,n)$ are the coordinates $[c_{i_1,\dots, i_k}]_{1\leq i_1<\dots<i_k\leq n}$ of $\varphi(\mathbf E) \in \P^N$. The coordinate charts of $\P^N$ are given by 
\begin{align}
    \mathcal{U}_{j_1\dots j_k}:=\{[c_{i_1,\dots,i_k}] \in \P^N \ | \ c_{j_1,\dots,j_k} \neq 0\} &\overset{\psi_{j_1,\dots, j_k}}{\xrightarrow{\hspace{1.5cm}}} \R^N \\
\end{align}
where $\psi_{j_1,\dots,j_k}$ divides the coordinates by $c_{j_1,\dots,j_k}$ and forgets the $(j_1,\dots,j_k)$--th one.

Without loss of generality, we assume for the rest of this section that $\mathbf L=[ e_1,\dots,e_k]$. From \cref{geodesicprop}, for any $v_A \in T_{\mathbf L}G(k,n) \cong \R^{(n-k)\times k}$ we have 
\begin{align}\label{expidentity}
    \exp_{\mathbf L}(v_A) = \left[\begin{array}{ccc}
                    \cos(\mu_1)v_1 & \dots & \cos(\mu_k)v_k \\ 
                    \sin(\mu_1)u_1 & \dots & \sin(\mu_k)u_k
                \end{array}\right],
\end{align}
where $A=U\Sigma V^{\top}$ is a SVD of $A$, $U=(u_1 \dots u_{n-k}) \in O(n-k)$, $V=(v_1 \dots v_k) \in O(k)$ and $\Sigma = \mathrm{diag}(\mu_1 \dots \mu_k) \in \R^{(n-k)\times k}$. Remark that, in \eqref{expidentity},  only the first $k$ columns of $U$ actually matter, so we can use the reduced SVD and assume that $U$ is in the Stiefel manifold of orthonormal $k$--frames in $\R^{n-k}$, denoted $\mathrm{St}(k,n-k)$. 

From \cref{boundedprop} we also have that $G(k,n)\setminus \mathrm{cut}(\mathbf L)=\exp_{\mathbf L}(\mathrm{int}({R}_{\frac{\pi}{2}}))$, where we recall that $\mathrm{int}({R}_{\frac{\pi}{2}})$ is the set of matrices in $T_{\mathbf L}G(k,n)$ with singular values strictly less than $\frac{\pi}{2}$ and that $\exp_{\mathbf L}$ is a diffeomorophism onto its image when restricted to this set. 
\begin{lemma}\label{chartlemma}
    The image in the Pl\"ucker embedding of $G(k,n) \setminus \mathrm{cut}(\mathbf L)$ is contained in $\mathcal{U}_{1,\dots,k}$. 
\end{lemma}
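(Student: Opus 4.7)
The plan is to use the explicit formula from Proposition \ref{geodesicprop} to read off the Plücker coordinate $c_{1,\dots,k}$ of any point in $G(k,n)\setminus \mathrm{cut}(\mathbf{L})$ and show it is nonzero.

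First, since $\mathbf{L}=[e_1,\dots,e_k]$, the natural representative in $O(n)$ is $\widehat{L}=\mathds{1}$. By Proposition \ref{boundedprop}, every $\mathbf{E}\in G(k,n)\setminus \mathrm{cut}(\mathbf{L})$ can be written as $\mathbf{E}=\exp_\mathbf{L}(v_A)$ for some $A\in \mathrm{int}(R_{\frac{\pi}{2}})$, i.e.\ whose singular values $\mu_1,\dots,\mu_k$ all satisfy $\mu_i<\frac{\pi}{2}$. Taking a (reduced) SVD $A=U\Sigma V^\top$ with $V\in O(k)$, $U\in \mathrm{St}(k,n-k)$ and $\Sigma=\mathrm{diag}(\mu_1,\dots,\mu_k)$, formula \eqref{expidentity} from Proposition \ref{geodesicprop} gives an explicit matrix whose columns form a basis of $\mathbf{E}$.

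Next, by definition of the Plücker coordinates, $c_{1,\dots,k}(\mathbf{E})$ is the determinant of the top $k\times k$ block of that matrix, namely of $(\cos(\mu_1)v_1,\dots,\cos(\mu_k)v_k)$. Pulling the cosines out of the determinant yields
\[
c_{1,\dots,k}(\mathbf{E}) \;=\; \cos(\mu_1)\cdots \cos(\mu_k)\,\det(V).
\]
Since $V\in O(k)$ we have $\det(V)=\pm 1$, and since $0\leq \mu_i<\frac{\pi}{2}$ for every $i$ we have $\cos(\mu_i)>0$. Therefore $c_{1,\dots,k}(\mathbf{E})\neq 0$, which means $\varphi(\mathbf{E})\in \mathcal{U}_{1,\dots,k}$.

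There is no real obstacle here: the lemma is essentially a direct computation once one has the explicit description \eqref{explicitgeodesic} of the exponential map together with the identification $G(k,n)\setminus \mathrm{cut}(\mathbf{L})=\exp_\mathbf{L}(\mathrm{int}(R_{\frac{\pi}{2}}))$. The only thing worth a moment's care is that the choice of basis provided by Proposition \ref{geodesicprop} is legitimate for reading off Plücker coordinates (it is, because any basis of $\mathbf{E}$ computes the same point of $\mathds{P}(\Lambda^k\R^n)$ up to a global nonzero scalar, so nonvanishing of $c_{1,\dots,k}$ in one basis is intrinsic).
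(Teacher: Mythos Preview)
Your proof is correct and essentially identical to the paper's own argument: write any $\mathbf{E}\in G(k,n)\setminus\mathrm{cut}(\mathbf{L})$ as $\exp_{\mathbf{L}}(v_A)$ with $A\in\mathrm{int}(R_{\frac{\pi}{2}})$, read off the $(1,\dots,k)$ Pl\"ucker coordinate as $\cos(\mu_1)\cdots\cos(\mu_k)\det(V)$, and conclude it is nonzero since $V\in O(k)$ and each $\mu_i<\frac{\pi}{2}$. Your added remark on the basis-independence of the nonvanishing of a Pl\"ucker coordinate is a harmless clarification.
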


\begin{proof}
    Every point of $G(k,n) \setminus \mathrm{cut}(\mathbf L)$ can be written as in \eqref{expidentity} for some $A \in \mathrm{int}({R}_{\frac{\pi}{2}})$; in particular $0\leq\mu_i < \frac{\pi}{2}$ for every $i=1,\dots,k$. The Pl\"ucker coordinate with multi--index $(1,\dots,k)$ of \eqref{expidentity} is given by $\cos(\mu_1)\dots\cos(\mu_k)\mathrm{det}(V)$. This is always non--zero, since $V \in O(k)$ and $\cos(\mu_i)\neq 0$ for every $i=1,\dots,k$. This proves the statement.
\end{proof}

Let now $X \hookrightarrow G(k,n)$ be a smooth algebraic hypersurface of degree $d$, in the sense of \cite[Ch. 3, Sect. A]{greenbook}. By \cite[Prop. 2.1, Chapter 3]{greenbook} there exists a homogeneous polynomial $p$ of degree $d$ in the Pl\"ucker coordinates such that $X=\{p=0\}$. The following results provides an upper bound to the $\textrm{GDC}$ of $X$.

\begin{theorem}\label{boundthm}
    For every $0\leq k\leq n$ there exist constants $c_1(k,n), c_2(k,n)>0$ such that for every smooth, degree $d$ hypersurface $X \hookrightarrow G(k,n)$, the following bound holds:
    \begin{align}
        \mathrm{GDC}(X) \leq c_1(k,n) d^{c_2(k,n)}.
    \end{align}
\end{theorem}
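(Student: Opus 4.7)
The plan is to realize the set of critical points of $\dist_\mathbf{L}|_X$ outside $\mathrm{cut}(\mathbf L)$ as a (generically) zero--dimensional semi--Pfaffian subset of a Euclidean space whose ambient dimension and whose Pfaffian chain's order and degree depend only on $k,n$, and where only the polynomial degree of the defining equations grows linearly in $d$. \cref{GVbound} will then directly yield the bound of the form $c_1(k,n)\,d^{c_2(k,n)}$.

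\textbf{Setup and encoding of $X$.} By $O(n)$--invariance, I fix $\mathbf L = [e_1,\dots,e_k]$. By \cref{boundedprop}, the set $G(k,n)\setminus \mathrm{cut}(\mathbf L)$ is parametrized via $\mathbf S = \exp_\mathbf{L}(v_A)$ with $A \in \mathrm{int}(R_{\pi/2})$; writing $A = U\Sigma V^\top$ by SVD, I introduce variables $(U,V,\mu) \in \R^{(n-k)\times k}\times \R^{k\times k}\times \R^k$ with $\Sigma = \mathrm{diag}(\mu_1,\ldots,\mu_k)$, subject to $U^\top U = V^\top V = I_k$ (polynomial constraints of degree $2$) and $0 \leq \mu_i < \pi/2$. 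The total number of variables $N_0 := (n-k)k + k^2 + k$ depends only on $k,n$. By \cref{cospfaffian} the functions $\{\cos\mu_i,\sin\mu_i\}_{i=1}^k$ belong to a Pfaffian chain of order $4k$ and degree $2$. By \cref{geodesicprop}, the Plücker coordinates of $\mathbf S$ are the $k\times k$ minors of $\begin{pmatrix} V\cos\Sigma \\ U\sin\Sigma \end{pmatrix}$, hence polynomials of degree $k$ in $(u_{ij},v_{ij},\cos\mu_i,\sin\mu_i)$. By \cref{chartlemma}, I may work in the affine chart $\mathcal{U}_{1,\dots,k}$, where $X$ is cut out by a single polynomial of degree $d$ in the Plücker coordinates; this pulls back to a Pfaffian equation $\tilde p(U,V,\mu)=0$ of format $(4k,2,dk)$.

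\textbf{Criticality, counting, and the main obstacle.} In these coordinates the pulled--back distance squared becomes $\phi^*\dist_\mathbf{L}^2 = \sum_i \mu_i^2$. By Lagrange multipliers, $\mathbf S$ is critical for $\dist_\mathbf{L}|_X$ iff there exist scalars $\lambda,\alpha_{ij},\beta_{ij}$ with
\[d\mleft(\sum_i \mu_i^2\mright) = \lambda\, d\tilde p + \sum_{ij}\alpha_{ij}\, d(U^\top U - I)_{ij} + \sum_{ij}\beta_{ij}\, d(V^\top V - I)_{ij}.\]
By \cref{pfaffianderivative}, every component of $d\tilde p$ is Pfaffian of format $(4k,2,O(dk))$. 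The resulting system defines a semi--Pfaffian set $S$ in $O(kn)$ variables cut out by $O(kn)$ equations, each of format $(4k,2,O(dk))$. For generic $\mathbf L$ the principal angles at critical points are pairwise distinct, so each critical $\mathbf S$ has at most $2^k k!$ preimages in $(U,V,\mu)$--space (sign changes and permutations in the SVD), and $S$ is zero--dimensional by \cref{finitecritical}. Applying \cref{GVbound} with $N = O(kn)$, $s = O(kn)$, $r = 4k$, $\alpha = 2$, and $\beta = O(dk)$ yields $\#S \leq b(S) \leq c_1(k,n)\,d^{c_2(k,n)}$ with $c_2(k,n) = N+r = O(kn)$. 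The main obstacle is a careful bookkeeping of the Pfaffian format through the Lagrange formulation and verifying that the SVD redundancy does not inflate the dimension of $S$; both are controlled via the genericity of $\mathbf L$, under which the principal angles at critical points may be assumed distinct and the constant $2^k k!$ overcounting factor gets absorbed into $c_1(k,n)$.
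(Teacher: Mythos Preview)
Your approach is essentially the same as the paper's: introduce SVD coordinates $(U,V,\mu)$ on $T_\mathbf{L}G(k,n)$, express everything through the Pfaffian chain built from $\cos\mu_i,\sin\mu_i$, write the Lagrange conditions, and apply \cref{GVbound}. The bookkeeping you outline (formats, number of variables, etc.) matches the paper's up to constants that get absorbed into $c_1(k,n)$ and $c_2(k,n)$.

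There is, however, a genuine gap in your final counting step. You assert that for generic $\mathbf L$ the principal angles at every critical point are pairwise distinct, so that each critical $\mathbf S$ has at most $2^k k!$ preimages in $(U,V,\mu)$--space, and therefore $S$ is zero--dimensional. This claim about distinctness of the angles $\theta_i(\mathbf L,\mathbf S)$ at the (unknown, $X$--dependent) critical points $\mathbf S$ is not established anywhere; \cref{finitecritical} only gives finiteness of the critical points themselves, not of their SVD fibers. If some critical $\mathbf S$ happens to have a repeated principal angle with $\mathbf L$, the corresponding fiber in $S$ is a positive--dimensional orbit and your inequality $\#S\leq b(S)$ is vacuous.

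The paper sidesteps this entirely: it never claims $S$ is finite. Instead it observes that the reconstruction map
\[
(U,V,\mu)\longmapsto \exp_\mathbf{L}\big(U\,\mathrm{diag}(\mu)\,V^\top\big)
\]
is continuous from $S$ into the set of critical points of $\dist_\mathbf{L}|_X$ outside $\mathrm{cut}(\mathbf L)$, which is discrete by \cref{finitecritical}. Hence each connected component of $S$ maps to a single critical point, giving
\[
\mathrm{GDC}(X)\leq b_0(S)\leq b(S),
\]
and \cref{GVbound} bounds $b(S)$. This argument works regardless of whether the fibers are finite or positive--dimensional, and you should replace your zero--dimensionality claim with it. (The paper also handles the Lagrange conditions via explicit rank equations on the Jacobian matrix rather than carrying the multipliers as variables, and splits into $2k$ basic semi--Pfaffian pieces according to the sign of some $\partial_{\mu_i}\tilde p$ so that \cref{GVbound} applies to each piece; these are details your sketch glosses over but which do not affect the shape of the final bound.)
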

The constants $c_1(k,n)$ and $c_2(k,n)$ are given, respectively, by \eqref{c_1} and \eqref{c_2} from the following proof.
\begin{proof}The proof consists of several steps. First we perform a preliminary reduction (a ``change of variables'') so that we reduce to study the set of critical points of a new function $f$ defined on a new set $Y$, in such a way that these critical points admits a semi--Pfaffian description; then we bound the topology of this set of critical points, call it $S$, using \cref{GVbound}; finally we show that each critical point of $\dist_{\mathbf L}|_X$ lying in $X \setminus \mathrm{cut}(\mathbf L)$ corresponds to at most one connected component of $S$ and, therefore, the bound we have obtained for the Betti numbers of $S$ also serves as a bound for $\mathrm{GDC}(X)$.

\emph{First step: a preliminary reduction}

    We need to describe, given a generic $\mathbf L \notin X$, the critical points of $\dist_{\mathbf L}|_X$ lying in $X \setminus \mathrm{cut}(\mathbf L)$. As above, we can assume without loss of generality, that $\mathbf L=[ e_1,\dots,e_k]$. Let $X=\{p=0\}$ for a homogeneous polynomial $p$ of degree $d$ in the Pl\"ucker coordinates.  By \cref{chartlemma}, in order to find these critical points, we can work in the affine chart $(\mathcal{U}_{1,\dots,k}, \psi_{1,\dots,k})$, where we dehomogeneize $p$, obtaining the corresponding polynomial function $\hat p$; thus $X\setminus \mathrm{cut}(\mathbf L)$ is defined by $\hat p=0$. 
    
    Consider now the set $\mathcal{G}:=\mathrm{St}(k,n-k) \times O(k) \times [0,\frac{\pi}{2})^k \subseteq \R^{(n-k)\times k}\times \R^{k\times k}\times \R^{k} \cong \R^{k(n+1)}$ and define the following map, where the variables are now the SVD components of the tangent vectors:
    \begin{align}\label{svdexp}
        \widetilde{\exp}_{\mathbf L}: \qquad \quad \mathcal{G} \quad&\ \xrightarrow{\hspace{1.2cm}} \qquad \qquad \quad G(k,n) \\
        (U,V,\mu) &\ \xmapsto{\hspace{1.2cm}} \left[\begin{array}{ccc}
                    \cos(\mu_1)v_1 & \dots & \cos(\mu_k)v_k \\ 
                    \sin(\mu_1)u_1 & \dots & \sin(\mu_k)u_k
                \end{array}\right],
    \end{align}
    with $U=(u_1,\dots,u_k) \in \mathrm{St}(k,n-k)$, $V=(v_1,\dots,v_k)\in O(k)$ and $\mu=(\mu_1,\dots,\mu_k)$. We express the polynomial function $\hat p$ in these new coordinates (i.e. the entries of $U,V$ and $\mu$) as $\tilde p := \hat p \circ\varphi\circ  \widetilde{\exp}_{\mathbf L}$. 
    
    The relation between the maps involved is depicted in the following commutative diagram (here $
    \varphi$ is the Pl\"ucker embedding):
    $$\begin{tikzcd}
{G(k,n)\setminus \mathrm{cut}(\mathbf{L})} \arrow[rr, "\varphi"]                                                                   &  & {\mathcal{U}_{1, \ldots, k}} \arrow[r, "\widehat{p}"] & \mathbb{R} \\
\mathrm{int}(R_{\frac{\pi}{2}}) \arrow[u, "\mathrm{exp}_\mathbf{L}"']                                                              &  &                                                       &            \\
\mathcal{G} \arrow[u] \arrow[uu, "\widetilde{\mathrm{exp}}_{\mathbf{L}}", bend left=60] \arrow[rrruu, "\widetilde{p}", bend right] &  &                                                       &           
\end{tikzcd}$$

It follows that $Y:=\widetilde{\mathrm{exp}}_{\mathbf{L}}^{-1}(X \setminus \mathrm{cut}(\mathbf L))$ is defined in 
$$\mathcal{U}:= \R^{(n-k)\times k}\times \R^{k\times k}\times \left(0,\frac{\pi}{2}\right)^k $$
by the system of equations
    \begin{equation}\label{eqvincolo}
        \begin{cases}
        \tilde p(U,V,\mu)=0 \\
        U^{\top}U =\mathds 1_k \\
        V^{\top}V = \mathds 1_k
        \end{cases}.
    \end{equation}
In these coordinates the squared distance function is given by 
\begin{align}\label{sqdist}
    f(U,V,\mu):=\dist_{\mathbf L}^2(\widetilde{\exp}_{\mathbf L}(U,V,\mu)) = \mu_1^2+\dots+\mu_k^2.
\end{align}
The main idea for the rest of the proof is now to find critical points of $f|_{Y}$, and then relate them to the critical points of $\dist_{\mathbf L}|_X$ lying in $X \setminus \mathrm{cut}(\mathbf L)$.

\emph{Second step: bound on the Betti numbers of the critical set of $f|_{Y}$.}

The critical points of $f|_{Y}$ can be found through Lagrange multipliers rule, i.e. imposing that the gradient of \eqref{sqdist} is a linear combination of the gradients of the equations in \eqref{eqvincolo}. This gradient is given by 
\begin{align}
    \nabla f (U,V,\mu)=2(0,0,\mu).
\end{align}
To shorten notations, introduce the function $F : \R^{(n-k)\times k}\times \R^{k\times k} \longrightarrow \R^{k(k+1)}$ given by 
\begin{align}
    F(U,V):=((\langle u_i,u_j\rangle-\delta_{ij})_{1\leq i\leq j\leq k},(\langle v_i,v_j\rangle-\delta_{ij})_{1\leq i\leq j\leq k});
\end{align}
then $F(U,V)=0$ are the defining equations for $\mathrm{St}(k,n-k)\times O(k)$, so that \eqref{eqvincolo} can be equivalently written as 
\begin{equation}\label{eqvincolo2}
    \begin{cases}
        \tilde p(U,V,\mu)=0 \\
        F(U,V)=0
    \end{cases}.
\end{equation}
Since the defining equations for Stiefel manifolds and orthogonal groups are regular, at every point $(U,V) \in F^{-1}(0)$ the differential of $F$, denoted by $D_{(U,V)}F$, has maximal rank, i.e. $k(k+1)$. Recall that the rows of $D_{(U,V)}F$ are given by the gradients of the components of $F$. Denote also by $\partial_{(U,V)}\tilde p$ the vector of partial derivatives of $\tilde p$ with respect to the variables in $U$ and $V$, and by $\partial_{\mu}\tilde p$ the vector of partial derivatives of $\tilde p$ with respect to $\mu_1,\dots,\mu_k$. To visualize the criticality conditions, look at the block matrix containing as rows the gradients of the equations in \eqref{eqvincolo2} and $\nabla f$
\begin{equation}\label{criticalmatrix}
    M:=\mleft(\begin{array}{cc}
        \partial_{(U,V)}\tilde p & \partial_{\mu}\tilde p \\
        D_{(U,V)}F & 0 \\
        0 & \mu
    \end{array}
    \mright);
\end{equation}
the criticality conditions are that \eqref{eqvincolo2} is satisfied and the last row of \eqref{criticalmatrix} is linearly dependent from the other rows. 

If $\partial_{\mu}\tilde p=0$, the criticality condition about gradients would force $\mu=0$. This solution would then correspond to the unique point $\mathbf L$, which doesn't lie on $X$. Therefore on the solution set we have $\mu \neq 0$ and $\partial_{\mu}\tilde p\neq 0$ and the upper part of $M$ consisting of the gradient of $\tilde p$ and $F$ has maximal rank, equal to $k(k+1)+1$. It follows that the criticality condition involving $M$ is equivalent to $\partial_{\mu}\tilde p \neq 0$ and all maximal minors of $M$ vanish.  

To enforce the condition $\partial_{\mu}\tilde p\neq 0$, we subdivide the system into $2k$ systems of the following forms
\begin{equation}\label{criticalsystem}
\begin{cases}
    \tilde p(U,V,\mu)=0 \\
    F(U,V)=0 \\
    \partial_{\mu_i}\tilde p >0 \\
    \mathrm{rank}(M)=k(k+1)+1
\end{cases}, \qquad
\begin{cases}
    \tilde p(U,V,\mu)=0 \\
    F(U,V)=0 \\
    \partial_{\mu_i}\tilde p <0 \\
    \mathrm{rank}(M)=k(k+1)+1
\end{cases},
\end{equation}
for $i=1,\dots,k$. It is an easy linear algebra exercise to show that 
\begin{equation}\label{equivlemma}
    \mathrm{rank}(M)=k(k+1)+1 \iff \mathrm{rank}\mleft(\begin{array}{c}
         \partial_{\mu}\tilde p \\
         \mu
    \end{array}\mright)=1 \ \text{and} \ \mathrm{rank}\mleft(\begin{array}{c}
         \partial_{(U,V)}\tilde p \\
         D_{(U,V)}F
    \end{array}\mright)=k(k+1),
\end{equation}
so that in the systems \eqref{criticalsystem} the last condition can be replaced by the right side of \eqref{equivlemma}. 

At this point, the crucial observation is that all the functions involved in \eqref{criticalsystem} and \eqref{equivlemma} are Pfaffian functions in the domain $\mathcal{U}\subseteq \R^{k(n+1)}$ in a common Pfaffian chain. To see this, define the functions
\begin{align}
    f_{1,i}\,(U,V,\mu)&:=\tan\mleft(\frac{\mu_i}{2}\mright),\\
    f_{2,i}\,(U,V,\mu)&:=\cos^2\mleft(\frac{\mu_i}{2}\mright),\\
    f_{3,i}\,(U,V,\mu)&:=\cos(\mu_i), \\
    f_{4,i}\,(U,V,\mu)&:=\sin(\mu_i).
\end{align}
By \cref{cospfaffian} the ordered sequence $\mathfrak{C}=(f_{1,1},\dots,f_{4,1},f_{1,2},\dots,f_{4,k})$ is a Pfaffian chain in $\mathcal{U}\subseteq \R^{k(n+1)}$ of order $4k$ and degree $2$. We now examine all the functions in \eqref{criticalsystem} and \eqref{equivlemma}, computing their pfaffian formats with respect to the chain $\mathfrak{C}$ and the $k(n+1)$ variables $(U,V,\mu)$:

\begin{itemize}
    \item [-] $\tilde p$ is a polynomial of degree at most $d$ in the affine Pl\"ucker coordinates obtained taking the $k\times k$ minors of the matrix in \eqref{svdexp} and dividing by the product $\cos(\mu_1)\dots\cos(\mu_k)$. Each of the entries of the matrix is a degree $2$ polynomial in the functions of the Pfaffian chain above. Therefore $\tilde p$ is a Pfaffian function of order $4k$ and degrees bounded by $(2,2kd)$ on $\mathcal{U}$;
    \item[-] $F(U,V)=0$ corresponds to $k(k+1)$ equations, each being polynomial of degree $2$; 
    \item[-] by \cref{pfaffianderivative} the function $\partial_{\mu_i}\tilde p$ is Pfaffian of format $(4k,2,2kd+1)$ for every $i=1,\dots,k$; 
    \item[-] the condition of $\partial_{\mu}\tilde p$ and $\mu$ forming a $\mathrm{rank}$--$1$ matrix can be expressed through $k-1$ equations of Pfaffian format $(4k,2,2kd+2)$. Indeed depending on which of the systems we are looking at, we know that there exists $i\in \{1,\dots,k\}$ such that the column $(\partial_{\mu_i}\tilde p, \mu_i)^{\top}$ is non-zero. It is then sufficient to impose that the other $k-1$ columns are proportional to this one;
    \item[-] the condition that the matrix of partial derivatives of $\tilde p$ and $F$ with respect to $U,V$ has rank $k(k+1)$ corresponds to all the maximal minors being zero. As the matrix has size $(k(k+1)+1)\times nk$, this gives $\binom{nk}{k(k+1)+1}$ equations of Pfaffian format $(4k,2,2dk+k(k+1)+1)$.
\end{itemize}
From these computations follows that \eqref{criticalsystem} are systems involving $\binom{nk}{k(k+1)+1}+(k+1)^2$ Pfaffian functions of formats bounded by $(4k,2,2dk+k(k+1)+1)$ in $k(n+1)$ variables. 

Denote now by $S_i^+\subseteq \mathcal{U}$ the solution set of the first system in \eqref{criticalsystem} and by $S_i^-$ that of the second one. Define the semi-Pfaffian set 
\begin{align}
    S:= \bigcup_{\substack{i=1,\dots,k \\ *\in\{+,-\}}} S_i^*.
\end{align}
A direct application of \cref{GVbound} provides the following upper bound for the sum of the Betti numbers of $S_i^*$ for every $i=1,\dots,k$ and $*\in \{+,-\}$:
\begin{align}\label{eq:bettii}
    b(S_i^*) \leq \tilde c_1(k,n) d^{c_2(k,n)} ,
\end{align}
where the $\tilde c_1(k,n)$ and $c_2(k,n)$ depend on $k$ and $n$ only and are given by 
\begin{align}
    \tilde c_1(k,n) &:= c\, 2^{8k^2-2k}\mleft(\binom{nk}{k(k+1)+1}+(k+1)^2\mright)^{k(n+1)}(2k^2(n+1))^{k(n+5)}, \\
    \label{c_2} c_2(k,n) &:= k(n+5),
\end{align}
with $c$ being a constant. 

\emph{Third step: relate the critical points of $f|_{Y}$ to the critical points of $\dist_{\mathbf L}|_X$ lying in $X \setminus \mathrm{cut}(\mathbf L)$.
}

Every $(U,V,\mu) \in S$ gives a critical point of $\dist_{\mathbf L}|_X$. Recall indeed that the variables $(U,V,\mu)$ were introduced to play the role of the SVD of tangent matrices in $T_{\mathbf L}G(k,n)$, to which apply the exponential at $\mathbf L$ to obtain the whole $G(k,n)\setminus \mathrm{cut}(\mathbf L)$. Therefore the map that reconstructs critical points of $\dist_{\mathbf L}|_X$ is given by  
\begin{align}
    (U,V,\mu) \in S \xmapsto{\hspace{1cm}} \exp_{\mathbf L}(U \,\mathrm{diag}(\mu_1,\dots,\mu_k)\,V^{\top})\in X \setminus \mathrm{cut}(\mathbf L).
\end{align}
Since this map is continuous, the discreteness of critical points outside of $\mathrm{cut}(\mathbf L)$ given by \cref{normalexplemma} ensures that every connected component of $S$ corresponds to exactly one critical point. Denoting by $b_0(S)$ the number of connected components of $S$, we have 
\begin{align}\label{abstractbound}
    \mathrm{GDC}(X) \leq b_0(S) \leq \sum_{\substack{i=1,\dots,k \\ *\in\{+,-\}}} b_0(S_i^*) \leq \sum_{\substack{i=1,\dots,k \\ *\in\{+,-\}}} b(S_i^*).
\end{align}
The statement of the theorem follows from \eqref{abstractbound} and \eqref{eq:bettii} with 
\begin{align}\label{c_1} 
    c_1(k,n):= 2k \, \tilde c_1(k,n).
\end{align}

\end{proof}
\begin{remark}
    The bound provided by \cref{boundthm} is enormous: already in the case $k=2$, $n=4$ the constant $c_1(2,4)$ has order $10^{50}$! This is a typical feature of the bounds obtained through the theory of Pfaffian functions: the importance of \cref{boundthm} is not the value of the bound, but its existence and computability. Notice however that different choices of variables or ways of writing the critical equations might lead to improved bounds, even though obtaining reasonable bounds this way is hopeless. 
\end{remark}
\begin{remark}
    An important feature of the strategy outlined in the proof of \cref{boundthm} is that it can be easily extended to any algebraic $X$ for which explicit equations are given; the drawback is that the critical condition might get more complicated if $X$ is defined by more than one equation. Notice also that we could extend the procedure to \textit{Pfaffian varieties}: in that case we would need to keep into account also the format of the defining Pfaffian equations. 
\end{remark}

\section{Nearest point problem to simple Schubert varieties}\label{Schubertsection}
    In this section we will apply the previous results to some specific subvarieties of $G(k,n)$. These varieties will be instances of simple Schubert varieties, chosen in such a way that we can exploit the results of \cref{grassmanngeometrysection} to describe them in terms of matrix varieties. 
    
    \subsection{Schubert varieties}Schubert varieties  are defined by requiring non--trivial intersection conditions between the $k$-planes of $G(k,n)$ and the elements of a fixed complete flag in $\R^n$. To every Schubert variety one can associate a Young diagram and viceversa. From the Young diagram associated to the variety one can easily read how many independent conditions are needed in the definition of the variety and its codimension. We refer the reader to \cite{Fultontableux} for more details.
    
    Here we consider a specific class of Schubert varieties, defined as follows. 
    \begin{definition}[Simple Schubert variety]Fix a $k$--plane $\mathbf W \in G(k,n)$. Define the variety $\Omega_s=\Omega_s(\mathbf{W})$  by
\begin{align}
   \label{omegadef} \Omega_s(\mathbf{W}) := \{\mathbf E \in G(k,n) \ | \ \mathrm{dim}(\mathbf E \cap \mathbf W)\geq s\},
\end{align}
where $s\in \{1,\dots,k-1\}$ (if $s=k$, $\Omega_k =\{\mathbf W\}$). 
\end{definition}
The variety $\Omega_s$ can be stratified as
\[
\Omega_s = \Omega_s^{\mathrm{sm}} \sqcup \left(\bigsqcup_{j=1}^{k-s} \Omega_s^{\mathrm{sing,j}}\right),\]
where 
\[\label{smoothomegas}\Omega_s^{\mathrm{sm}} = \{\mathbf E \in G(k,n) \ | \ \mathrm{dim}(\mathbf E \cap \mathbf W)=s\}, \]
denotes the smooth stratum of $\Omega_s$ and 
\[\label{singomegas}\Omega_s^{\mathrm{sing,j}}= \{\mathbf E \in G(k,n) \ | \ \mathrm{dim}(\mathbf E \cap \mathbf W)=s+j\},\]
    for $j=1,\dots,k-s$, denote the  singular strata. Remark that the deepest singular stratum of $\Omega_s$ consists of the point $\mathbf W$ alone, i.e. $\Omega_s^{\mathrm{sing},k-s}=\{\mathbf W\}$. 
    
    We introduce the notation $\theta_i(\mathbf E):=\theta_i(\mathbf E,\mathbf W)$ for the principal angles between any $\mathbf E \in G(k,n)$ and the fixed $\mathbf W$. Recalling the properties of principal angles, $\Omega_s$ and its strata can be equivalently described in terms of principal angles as
    \begin{align}
\Omega_s=&\{\mathbf E \in G(k,n) \ | \ \theta_1(\mathbf E)=\dots=\theta_s(\mathbf E)=0\}, \\ 
\Omega_s^{\mathrm{sm}}=&\{\mathbf E \in G(k,n) \ | \ \theta_1(\mathbf E)=\dots=\theta_s(\mathbf E)=0, \ \theta_{s+1}(\mathbf E)>0\}, \\
\Omega_s^{\mathrm{sing,j}}=&\{\mathbf E \in G(k,n) \ | \ \theta_1(\mathbf E)=\dots=\theta_{s+j}(\mathbf E)=0, \ \theta_{s+j+1}(\mathbf E)>0\},
\end{align}
for $j=1,\dots,k-s$. \\

As a consequence of \cref{boundedprop}, when applying the exponential map of $G(k,n)$ at the point $\mathbf W$ to a tangent vector $v_A \in T_{\mathbf W}G(k,n)$ for some $A \in \R^{(n-k)\times k}$, the rank of $A$ will correspond to the number of non--zero principal angles between $\mathbf W$ and $\exp_{\mathbf W}(v_A)$, since the rank of a matrix is equal to the number of its non-zero singular values. (From now on we will identify $T_{\mathbf W}G(k,n) \cong \R^{(n-k)\times k}$ via the isomorphism \cref{def:vA}, the choice of a representative $\widehat W \in O(n)$ is implicit.) 

Recalling the notation introduced in \eqref{boundednotation} and the fact that the image of $R_{\frac{\pi}{2}}$ through $\exp_{\mathbf W}$ is the whole $G(k,n)$, we have the following corollary of \cref{boundedprop}.
\begin{corollary}\label{boundedrankomega}
    Let $\exp_{\mathbf W}:\R^{(n-k)\times k} \longrightarrow G(k,n)$ be the exponential map of $G(k,n)$ at $\mathbf W$. Then 
    \begin{align}\label{omegaexp}
\Omega_s = &\exp_{\mathbf W}\left(\mathcal M_{\leq k-s} \cap R_{\frac{\pi}{2}}\right) ,\\
\Omega_s^{\mathrm{sm}} =& \exp_{\mathbf W}\left(\mathcal M_{k-s}\cap R_{\frac{\pi}{2}}\right) ,\\
\Omega_s^{\mathrm{sing},j} =& \exp_{\mathbf W}\left(\mathcal M_{k-s-j} \cap R_{\frac{\pi}{2}}\right),
\end{align}
for $j=1,\dots,k-s$. 
\end{corollary}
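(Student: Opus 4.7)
The plan is to argue that each equality follows directly from Proposition~\ref{boundedprop} by translating the defining conditions on $\dim(\mathbf{E}\cap \mathbf{W})$ into conditions on the rank of a matrix representative in the tangent space at $\mathbf{W}$. The key dictionary is the one recalled just before the corollary: for any $\mathbf{E}\in G(k,n)$, the number of principal angles between $\mathbf{E}$ and $\mathbf{W}$ that equal zero is exactly $\dim(\mathbf{E}\cap \mathbf{W})$, and by Proposition~\ref{boundedprop} these principal angles coincide with the singular values of any $A\in R_{\frac{\pi}{2}}$ with $\exp_{\mathbf{W}}(v_A)=\mathbf{E}$. Since the rank of a matrix equals the number of its nonzero singular values, one immediately reads off $\dim(\mathbf{E}\cap\mathbf{W})=k-\mathrm{rank}(A)$.

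First I would establish the inclusion $\exp_{\mathbf{W}}(\mathcal{M}_{\leq k-s}\cap R_{\frac{\pi}{2}})\subseteq \Omega_s$: any $A\in \mathcal{M}_{\leq k-s}\cap R_{\frac{\pi}{2}}$ has at least $s$ vanishing singular values, so $\mathbf{E}:=\exp_{\mathbf{W}}(v_A)$ satisfies $\theta_1(\mathbf{E})=\dots=\theta_s(\mathbf{E})=0$, hence $\dim(\mathbf{E}\cap \mathbf{W})\geq s$ and $\mathbf{E}\in \Omega_s$. For the reverse inclusion, take $\mathbf{E}\in \Omega_s$: by the surjectivity statement in Proposition~\ref{boundedprop}, there exists $A\in R_{\frac{\pi}{2}}$ with $\exp_{\mathbf{W}}(v_A)=\mathbf{E}$, and at least $s$ of its singular values vanish (being the zero principal angles), so $\mathrm{rank}(A)\leq k-s$.

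The other two equalities follow by exactly the same argument, just replacing the inequality by the corresponding equalities: $\mathbf{E}\in \Omega_s^{\mathrm{sm}}$ forces exactly $s$ zero principal angles (hence $\mathrm{rank}(A)=k-s$), and $\mathbf{E}\in \Omega_s^{\mathrm{sing},j}$ forces exactly $s+j$ zero principal angles (hence $\mathrm{rank}(A)=k-s-j$). Conversely, a matrix of the prescribed rank in $R_{\frac{\pi}{2}}$ produces via $\exp_{\mathbf{W}}$ a plane with the required intersection dimension with $\mathbf{W}$.

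No step is really an obstacle here: the content of the corollary is essentially a bookkeeping translation between ``number of zero singular values of $A$'' and ``dimension of intersection of $\exp_{\mathbf{W}}(v_A)$ with $\mathbf{W}$'', made possible by Proposition~\ref{boundedprop}. The only thing worth being careful about is that $\exp_{\mathbf{W}}$ restricted to $R_{\frac{\pi}{2}}$ need not be injective (different $A$'s can give the same $\mathbf{E}$, as discussed around the cut locus), but this is irrelevant for a set-theoretic equality of images; what matters is only that every $\mathbf{E}\in G(k,n)$ has \emph{some} preimage in $R_{\frac{\pi}{2}}$, and that all such preimages share the same singular values (being the principal angles, which depend only on $\mathbf{E}$ and $\mathbf{W}$). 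These two facts are precisely what Proposition~\ref{boundedprop} guarantees.
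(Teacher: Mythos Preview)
Your proposal is correct and follows exactly the approach the paper takes: the paper does not give a formal proof of this corollary but simply observes in the paragraph preceding it that, by Proposition~\ref{boundedprop}, the rank of $A$ equals the number of nonzero principal angles between $\mathbf{W}$ and $\exp_{\mathbf{W}}(v_A)$, which is $k-\dim(\mathbf{E}\cap\mathbf{W})$. Your write-up is a careful spelling-out of this dictionary, and your remark on non-injectivity of $\exp_{\mathbf{W}}$ on $R_{\frac{\pi}{2}}$ is a valid (if not strictly necessary) clarification.
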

In the next subsection we will see how \cref{boundedrankomega} allows to use the techniques coming from matrix rank--approximation, and in particular \cref{EY}, to find some critical points for the Grassmann distance function from a generic $\mathbf L \notin \Omega_s$ restricted to $\Omega_s$. 

\subsection{Eckart--Young theorem and critical points}
    In this section we explore how to exploit the description given by \cref{boundedrankomega} to find critical points for the function $\dist_{\mathbf L}|_{\Omega_s}$ for a generic $\mathbf L \in G(k,n)$. \\

    The first thing to observe is that for a generic $\mathbf L \in G(k,n)$, the intersection $\mathbf L \cap \mathbf W$ is trivial, so that no principal angle between $\mathbf L$ and $\mathbf W$ is zero. The same is true for $\mathbf L \cap \mathbf W^{\perp}$, meaning that no principal angle will be $\frac{\pi}{2}$. Moreover the angles will all be distinct. In particular we have that the generic $\mathbf L$ is not in the cut locus of $\mathbf W$ and $\exp_{\mathbf W}^{-1}(\mathbf L) \cap R_{\frac{\pi}{2}}$ consists of only one matrix with full rank. We denote this matrix by $A_{\mathbf L}$:
    \[\label{eq:AL}A_{\mathbf{L}}:= \exp_{\mathbf W}^{-1}(\mathbf L) \cap R_{\frac{\pi}{2}}.\]

    By \eqref{omegaexp} we have that $\exp_{\mathbf W}^{-1}(\Omega_s)\cap R_{\frac{\pi}{2}}$ corresponds to $\mathcal{M}_{\leq k-s} \cap R_{\frac{\pi}{2}}$. As we want to find critical points for the restriction of the distance function from $\mathbf L$ to $\Omega_s$, we might wonder whether this problem can be translated to the matrix case in $T_{\mathbf W}G(k,n)$ via the inverse of the exponential map $\exp_{\mathbf W}^{-1}$, where $\mathbf L$ corresponds to the full-rank matrix $A_{\mathbf L}$ and $\Omega_s$ corresponds to matrices with rank bounded by $k-s$. 
    
    The (crazy at this point!) idea would be the following:
\begin{enumerate}[label=(\roman*)]
\item given $\mathbf L \notin \Omega_s$ generic, find the unique full--rank matrix $A_{\mathbf L} \in R_{\frac{\pi}{2}}$ such that $\exp_{\mathbf W}(A_{\mathbf L})=\mathbf L$; 
    \item use Eckart--Young (\cref{EY}) to approximate $A_\mathbf{L}$ with rank $k-s$ matrices;
    \item map back on $G(k,n)$ the rank--approximated matrices with $\exp_{\mathbf W}$ (these will be smooth points in $\Omega_s$).
\end{enumerate}
A priori, there is no reason to believe that the points found via above strategy are critical points for $\dist_{\mathbf L}|_{\Omega_s}$, since one expects a distortion effect in the metric (and the critical point equation) given by applying the exponential map. Suprisingly, this procedure actually produces critical points, as shown in the following result. 
\begin{theorem}\label{standardEYomegas}
    Let $\mathbf L \notin \Omega_s$ be generic and $A_{\mathbf L} \in R_{\frac{\pi}{2}}$ be such that $\exp_{\mathbf W}(A_{\mathbf L})=\mathbf L$. For any $I \subseteq \{1,\dots,k\}$ with $\lvert I\rvert=k-s$ let $A_{\mathbf L,I} \in \mathcal{M}_{k-s}$ be the rank--approximation obtained from $A_{\mathbf L}$ via \cref{EY}. Define $\mathbf L_I = \exp_{\mathbf W}(A_{\mathbf L,I}) \in \Omega_s^{\mathrm{sm}}$. Then $\mathbf L_I$ is a critical point for $\dist_{\mathbf L}|_{\Omega_s}$. 
\end{theorem}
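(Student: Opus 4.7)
The plan is to invoke \cref{smoothcritical}: it suffices to show that (i) $\mathbf{L}_I\notin\mathrm{cut}(\mathbf{L})$, and (ii) the tangent vector at $\mathbf{L}_I$ to the unique length--minimizing geodesic from $\mathbf{L}_I$ to $\mathbf{L}$ is orthogonal to $T_{\mathbf{L}_I}\Omega_s$. Genericity of $\mathbf{L}$ ensures that $A_{\mathbf{L}}$ admits an SVD $U\Sigma V^{\top}$ with $U=(u_1,\dots,u_{n-k})$, $V=(v_1,\dots,v_k)$, $\Sigma=\mathrm{diag}(\sigma_1,\dots,\sigma_k)$ and $0<\sigma_1<\cdots<\sigma_k<\tfrac{\pi}{2}$.

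For (i), I would use \cref{geodesicprop} together with the definition of the Eckart--Young approximant to write down explicit orthonormal bases. The plane $\mathbf{L}_I$ has basis $\{\tilde w_i\}_{i=1}^k$ with
\[
\tilde w_i=\cos(\sigma_i)v_i+\sin(\sigma_i)u_i\ \text{for }i\in I,\qquad \tilde w_i=v_i\ \text{for }i\in I^{c},
\]
while $\mathbf{L}_I^\perp$ admits an orthonormal basis containing the vectors $u_i$ for $i\in I^{c}$ and the vectors $-\sin(\sigma_i)v_i+\cos(\sigma_i)u_i$ for $i\in I$ (completed if necessary by an orthonormal basis of the complement of $\mathbf{W}\oplus\mathrm{span}(u_1,\dots,u_k)$). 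Via \cref{def:hom}, the linear map $\psi:\mathbf{L}_I\to \mathbf{L}_I^\perp$ sending $v_i\mapsto \sigma_i u_i$ for $i\in I^{c}$ and killing $\tilde w_i$ for $i\in I$ has singular values $\{\sigma_i:i\in I^{c}\}$. Applying \cref{geodesicprop} to $\psi$, the resulting geodesic rotates each $v_i$ (for $i\in I^{c}$) by angle $\sigma_i$ inside $\langle v_i,u_i\rangle$ and keeps $\tilde w_i$ fixed for $i\in I$; the endpoint is exactly the natural basis of $\mathbf{L}$, so $\exp_{\mathbf{L}_I}(\psi)=\mathbf{L}$. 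Because every $\sigma_i<\tfrac{\pi}{2}$, the vector $\psi$ lies in $R_{\frac{\pi}{2}}$, and \cref{boundedprop} forces this geodesic to be the unique length--minimizing one; in particular $\mathbf{L}_I\notin\mathrm{cut}(\mathbf{L})$.

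For (ii), I will use \eqref{tangentschubert} with $\mathbf{V}_i=\mathbf{W}$. A direct computation with the basis above yields $\mathbf{L}_I\cap\mathbf{W}=\mathrm{span}\{v_i:i\in I^{c}\}$ and $\mathbf{L}_I^\perp\cap\mathbf{W}=\{0\}$, so
\[
T_{\mathbf{L}_I}\Omega_s=\{\varphi\in\mathrm{Hom}(\mathbf{L}_I,\mathbf{L}_I^\perp)\mid\varphi(v_i)=0\ \text{for every}\ i\in I^{c}\}.
\]
By \eqref{eq:gE}, the Grassmann metric at $\mathbf{L}_I$ coincides with the Frobenius product on $\mathrm{Hom}(\mathbf{L}_I,\mathbf{L}_I^\perp)$ read in any pair of orthonormal bases. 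With respect to the bases chosen above, the matrix of $\psi$ has nonzero columns only at positions indexed by $\{v_i:i\in I^{c}\}$, while every $\varphi\in T_{\mathbf{L}_I}\Omega_s$ has those columns equal to zero. Therefore $\langle\psi,\varphi\rangle=0$ for every $\varphi\in T_{\mathbf{L}_I}\Omega_s$, i.e.\ $\psi\in N_{\mathbf{L}_I}\Omega_s$, and \cref{smoothcritical} concludes that $\mathbf{L}_I$ is critical for $\dist_{\mathbf{L}}|_{\Omega_s}$.

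The delicate point is ensuring that the bases of $\mathbf{L}_I$ and $\mathbf{L}_I^\perp$ arising from the Eckart--Young SVD decompose compatibly with both the intersection $\mathbf{L}_I\cap \mathbf{W}$ and the tangent vector of the geodesic to $\mathbf{L}$. Once this compatibility is exhibited, the orthogonality is the transparent observation that $\psi$ moves $\mathbf{L}_I\cap\mathbf{W}$ into $\mathbf{W}^\perp$, whereas $T_{\mathbf{L}_I}\Omega_s$ allows motion of $\mathbf{L}_I\cap\mathbf{W}$ only inside $\mathbf{W}$.
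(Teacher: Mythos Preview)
Your argument follows the paper's proof essentially verbatim: identify an orthonormal basis of $\mathbf{L}_I$ via the SVD, write down the unique short geodesic from $\mathbf{L}_I$ to $\mathbf{L}$ as the map $\psi$ that rotates each $v_i$ ($i\in I^c$) by $\sigma_i$ inside $\langle v_i,u_i\rangle$ while fixing the $\tilde w_i$ ($i\in I$), and then check $\psi\in N_{\mathbf{L}_I}\Omega_s$ using \eqref{tangentschubert}.

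One caution on the tangent--space step. Your displayed identity
\[
T_{\mathbf{L}_I}\Omega_s=\{\varphi:\varphi(v_i)=0\ \text{for all}\ i\in I^c\}
\]
cannot be right on dimensional grounds: it has dimension $(n-k)(k-s)$, while $\dim\Omega_s^{\mathrm{sm}}=(k-s)(n-k+s)$. The issue is that at $\mathbf{L}_I$ one has $\mathbf{L}_I^{\perp}\cap\mathbf{W}=\{0\}$ literally, but the tangent space to the incidence variety at a smooth point is
\[
T_{\mathbf{L}_I}\Omega_s=\bigl\{\varphi:\varphi(\mathbf{L}_I\cap\mathbf{W})\subseteq \pi_{\mathbf{L}_I^{\perp}}(\mathbf{W})\bigr\},
\]
i.e.\ the image of $\mathbf{W}$ in $\R^n/\mathbf{L}_I\cong\mathbf{L}_I^{\perp}$, which here is the $(k-s)$--dimensional span of $\{-\sin(\sigma_j)v_j+\cos(\sigma_j)u_j:j\in I\}$. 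So the subspace you checked $\psi$ against is strictly smaller than the true tangent space. Fortunately your conclusion survives, and for exactly the reason you give in your closing sentence: $\psi$ sends $\mathbf{L}_I\cap\mathbf{W}$ into $\mathbf{W}^{\perp}\cap\mathbf{L}_I^{\perp}$, and this space is the orthogonal complement of $\pi_{\mathbf{L}_I^{\perp}}(\mathbf{W})$ inside $\mathbf{L}_I^{\perp}$ (for $x\in\mathbf{L}_I^{\perp}$ and $w\in\mathbf{W}$, $\langle x,\pi_{\mathbf{L}_I^{\perp}}w\rangle=\langle x,w\rangle$). The paper's proof makes the same slip in stating $T_{\mathbf{L}_I}\Omega_s$ but, like yours, closes by verifying that the image vectors $n_j$ lie in $\mathbf{W}^{\perp}\cap\mathbf{L}_I^{\perp}$, which is the substantive check.
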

\begin{proof}
The proof will be divided into several steps. First, given $\mathbf W$ and $\mathbf L$, we will compute the matrix $A_{\mathbf L}$. We will then switch to $A_{\mathbf L,I}$ and compute $\mathbf L_{I}$. As a last step we will compute the length minimizing geodesic connecting $\mathbf L_I$ to $\mathbf L$ and prove that this geodesic  starts orthogonal to $\Omega_s$. 

To compute $A_{\mathbf L}$ we proceed as in \cref{grassmanngeometrysection}. Assume we fixed representatives $\widehat W = (w_1 \dots w_k \ b_1 \dots b_{n-k}) \in p^{-1}(\mathbf W)$ and $\widehat L \in p^{-1}(\mathbf L)$. Let $0 < \theta_1 < \dots < \theta_k < \frac{\pi}{2}$ be the principal angles bewteen $\mathbf W$ and $\mathbf L$, i.e. $\theta_i:=\theta_i(\mathbf W, \mathbf L)$ for $i=1,\dots,k$. Notice that by the genericity of $\mathbf L$ we can assume they are all distinct and different from $0$ and $\frac{\pi}{2}$. Let $\Sigma=\mathrm{diag}(\theta_1,\dots,\theta_k)\in \R^{(n-k)\times k}$ and $\{p_1,\dots,p_k\}$ and $\{q_1,\dots,q_k\}$ be basis of principal vectors for $\mathbf W$ and $\mathbf L$ respectively. As in \cref{grassmanngeometrysection} define $\pi_i:=\langle p_i,q_i\rangle$ and $n_i \in \pi_i$ as the unit vector such that 
\begin{align}
    q_i = p_i \cos(\theta_i) + n_i \sin(\theta_i).
\end{align}
We can complete $\{n_1,\dots,n_k\}$ to an orthonormal basis of $\mathbf W^{\perp}$, denoted $\{n_1,\dots,n_{n-k}\}$. 

Define $V = (v_1 \dots v_k) \in O(n)$ the change of basis matrix from $\{w_1, \dots w_k\}$ to $\{p_1,\dots,p_k\}$, i.e. $(w_1 \dots w_k) V = (p_1 \dots p_k)$. Similarly define $U=(u_1 \dots u_{n-k})$ as the change of basis matrix from $\{b_1,\dots,b_{n-k}\}$ to $\{n_1,\dots,n_{n-k}\}$. We know by \cref{grassmanngeometrysection} that
\begin{align}
    A_{\mathbf L} = U\Sigma V^{\top}, \qquad \qquad\exp_{\mathbf W}(A_{\mathbf L})=\mathbf L.
\end{align}

For any $I=\{i_1,\dots,i_{k-s}\}\subseteq \{1,\dots,k\}$ let $\widetilde \Sigma = \mathrm{diag}(\widetilde \theta_1,\dots,\widetilde \theta_k)$ such that $\widetilde \theta_i=\theta_i$ if $i \in I$ and $\widetilde \theta_i=0$ otherwise, with ordering $i_1 < \dots < i_{k-s}$. Define $A_{\mathbf L,I}:= U\widetilde \Sigma V^{\top}$ and $\mathbf L_I:=\exp_{\mathbf W}(A_{\mathbf L,I})$. By construction we have $\mathrm{rank}(A_{\mathbf L,I})=k-s$ and therefore $\mathbf L_I \in \Omega_s^{\mathrm{sm}}$. 

By \cref{geodesicprop} we have
\begin{align}
    \mathbf L_I = \left[\widehat W\mleft(\begin{array}{ccc}
            \cos(\widetilde \theta_1)v_1 & \dots & \cos(\widetilde \theta_k)v_k \\
            \sin(\widetilde \theta_1)u_1 & \dots & \sin(\widetilde \theta_k)u_k
        \end{array}\mright)\right].
\end{align}
Explicitly, an orthonormal basis for $\mathbf L_I$ is given by 
\begin{align}
    &i \in I: \qquad \widehat W \begin{pmatrix}
        \cos(\theta_i)v_i \\
        \sin(\theta_i)u_i
    \end{pmatrix} = \cos(\theta_i)p_i + \sin(\theta_i)n_i = q_i, \\
    &i \notin I: \qquad \widehat W \begin{pmatrix}
        v_i \\ 0
    \end{pmatrix} = p_i.
\end{align}

We now want to describe the unique length minimizing geodesic connecting $\mathbf L_I$ to $\mathbf L$. We denote it as $\gamma_I:[0,1]\longrightarrow G(k,n)$ with $\gamma_I(0)=\mathbf L_I$ and $\gamma_I(1)=\mathbf L$. We use the procedure already described in \cref{grassmanngeometrysection}. Denote by $\{1,\dots,k\} \setminus I = \{j_1,\dots,j_s\}$ with ordering $j_1 <\dots < j_s$. As orthonormal bases we use $\beta_1:=\{q_{i_1},\dots,q_{i_{k-s}},p_{j_1},\dots,p_{j_s}\}$ for $\mathbf L_I$ and $\{q_{i_1},\dots,q_{i_{k-s}},q_{j_1},\dots,q_{j_s}\}$ for $\mathbf L$. Then we have 
    \begin{align}
        \mleft(\begin{array}{c}
            q_{i_1}^{\top} \\
            \vdots \\
            q_{i_{k-s}}^{\top} \\ \\
            q_{j_1}^{\top} \\
            \vdots \\
            q_{j_s}^{\top}
        \end{array}\mright)\bigg(q_{i_1} \dots q_{i_{k-s}} \ p_{j_1} \dots p_{j_s}
        \bigg) = \mathrm{diag}\big(1, \dots, 1, \cos(\theta_{j_1}),\dots,\cos(\theta_{j_s})\big),
    \end{align}
which is already in SVD form. This means that the basis above are already bases of principal vectors for $\mathbf L_I$ and $\mathbf L$, with the first $k-s$ principal angles being $0$ and the remaining $s$ being $0<\theta_{j_1}<\dots<\theta_{j_s}<\frac{\pi}{2}$. 

We complete the orthonormal set $\{n_{j_1},\dots,n_{j_s}\}\subseteq \mathbf L_I^{\perp}$ to an orthonormal basis of $\mathbf L_I^{\perp}$ denoted by $\beta_2:=\{f_1,\dots,f_{k-s},n_{j_1},\dots,n_{j_s},f_{k+1},\dots,f_{n-k}\}$. To write $\gamma_I$ we use the representative $\widehat L_I \in p^{-1}(\mathbf L_I)$ given by the basis $\beta_1$ and $\beta_2$, to fix the isomorphism $T_{\mathbf L_I}G(k,n) \cong \R^{(n-k)\times k}$. Then, the matrix $M_I:= \mathrm{diag}(0,\dots,0,\theta_{j_1},\dots,\theta_{j_s}) \in \R^{(n-k)\times k}$ is such that $\exp_{\mathbf L_I}(v_{M_I})=\mathbf L$. Thus, we are only left to check that $v_{M_I}$ is in the normal space to $\Omega_s$ at $\mathbf L_I$.

Using the identification of tangent vectors $v_A$ as linear maps $\varphi_A$ from $\mathbf L_I$ to $\mathbf L_I^{\perp}$, from \eqref{tangentschubert} we obtain 
\[
    T_{\mathbf L_I}\Omega_s = \{\varphi_A:\mathbf L_I \longmapsto \mathbf L_I^{\perp} \ | \ \varphi_A(\mathbf L_I \cap \mathbf W)\subseteq \mathbf L_I^{\perp}\cap\mathbf W\}, \]
   \begin{align}N_{\mathbf L_I}\Omega_s = \{\varphi_A:\mathbf L_I \longrightarrow \mathbf L_I^{\perp} \ |& \ \varphi_A(\mathbf L_I \cap \mathbf W) \subseteq (\mathbf L_I^{\perp}\cap \mathbf W)^{\perp}\cap \mathbf L_I^{\perp}\quad \text{and}  \\& \varphi_A((\mathbf L_I \cap \mathbf W)^{\perp}\cap \mathbf L_I)=\{0\}\}.
\end{align}
By construction, a basis for $\mathbf L_I \cap \mathbf W$ is given by $\{p_{j_1},\dots,p_{j_s}\}$. The matrix $M_I$ represents a linear map $\varphi_{M_I}:\mathbf L_I \longrightarrow \mathbf L_I^{\perp}$ with respect to the basis $\beta_1$ and $\beta_2$. Therefore we have 
\begin{align}
    &\varphi_{M_I}(q_{i_1})=0 \\
    &\qquad\vdots\\
    &\varphi_{M_I}(q_{i_{k-s}})=0 \\
    &\varphi_{M_I}(p_{j_1})=\theta_{j_1}n_{j_1} \\
    &\qquad \vdots \\
    &\varphi_{M_I}(p_{j_s})=\theta_{j_s}n_{j_s}.
\end{align}
Since $n_{j_1},\dots,n_{j_s}$ are orthogonal to both $\mathbf L_I$ and $\mathbf W$, it follows that $M_I \in N_{\mathbf L_I}\Omega_s$ and $\mathbf L_I$ is therefore critical for $\dist_{\mathbf L}|_{\Omega_s}$ by \cref{propo:distsmooth} and \cref{criticallemma}.
\end{proof}

\begin{remark}\label{maximaremark}
    Eckart--Young theorem does not contemplate the possibility of \lq\lq increasing\rq\rq\ a singular value, because the variety $\mathcal{M}_{\leq k-s}$ is unbounded. In particular, maxima cannot be found by increasing singular values, because we could always increase them more and move further away. But in our setting, we actually restrict to the compact set $\mathcal{M}_{\leq k-s} \cap R_{\frac{\pi}{2}}$ and it makes sense to ask what happens if instead of modifying a singular value of $A_{\mathbf L}$ by sending it to $0$, thus not performing any rotation, we modify it with the aim of producing a point of maximum. In order to achieve this, instead of rotating from the principal vectors for $\mathbf W$ to those for $\mathbf L$ or staying still, we have to rotate in the opposite direction until we create an angle of $\frac{\pi}{2}$, which is the maximum achievable. In other words, if the original angle between $\mathbf W$ and $\mathbf L$ is $\theta_i$, we have to rotate the $i$--th principal vector $p_i$ of $\mathbf W$ in the opposite direction with respect to that of $\mathbf L$ $q_i$ of an angle of $\frac{\pi}{2}-\theta_i$. We believe that this new kind of modification of $A_{\mathbf L}$ again produces critical points for $\dist_{\mathbf L}|_{\Omega_s}$, with the difference that now they can also be at a distance from $\mathbf L$ greater than that of $\mathbf W$ (which was not the case in \cref{standardEYomegas}). The two approaches, the one aimed at minimizing the distance in some directions, and the one that aims to maximize it, can be mixed to produce new type of critical points. All these newly created points will be in the cut locus of $\mathbf L$, therefore we cannot use the characterization of critical points through normal geodesics. In order to show that these are critical points one should explicitly compute the subdifferential and check that it intersects the normal space. Since we already know that no such points are minimizers, we will not perform the computation here. Nevertheless, we will use this idea when computing global maximizers in \cref{maximatheorem}.
\end{remark}
\begin{remark}
    \cref{standardEYomegas} tells us that the points we obtain are critical points, but these \emph{are not} all the critical points outside of the cut locus, in general. Nevertheless, it provides a lower bound to the GDC of the algebraic subvarieties $\Omega_s$. In particular, \cref{standardEYomegas} gives a critical point for every choice of a subset of $k-s$ indices among $\{1,\dots,k\}$, which amounts to $\binom{k}{s}$ critical points. Thus we have 
    \begin{align}
        \mathrm{GDC}(\Omega_s) \geq \binom{k}{s}.
    \end{align}
\end{remark}
In the next subsection we will describe the local structure of the Schubert variety $\Omega_s$ around $\mathbf W$. When the generic point is chosen close enough to $\mathbf W$, this will allow us to characterize all the critical points close to $\mathbf W$: they will be \emph{exactly} those provided by \cref{standardEYomegas}.

\subsection{A local structure result}
    We begin this section by describing a degenerate setting for our problem. Instead of picking a generic point $\mathbf L \notin \Omega_s$, we look at what happens when the point from which we compute the distance is $\mathbf W \in \Omega_s$, i.e. the point corresponding to the deepest singularity of $\Omega_s$. In this case we can show that the situation for the critical points of $\dist_{\mathbf W}$ restricted to $\Omega_s$ is the following: 
\begin{itemize}[label={--}]
    \item there is a unique minimum, which is trivially $\mathbf W$ itself;
    \item there are no other local minima or saddle points;
    \item there is a $G(s,k)\times G(k-s,n-k)$ of global maximizers and no other local ones.
\end{itemize}
To see this, remark that for every point $\mathbf E \in \Omega_s$, the first $s$ principal angles with $\mathbf W$ must be $0$, so that the maximum distance possible from $\mathbf W$ is given by $\sqrt{(k-s)}\frac{\pi}{2}$, corresponding to when the remaining $k-s$ principal angles are $\frac{\pi}{2}$. By the properties of principal angles, if $(k-s)$ principal angles are $\frac{\pi}{2}$, it means that $\mathbf E \cap \mathbf W^{\perp}$ has dimension $k-s$. Therefore any global maximizer $\mathbf E$ can be constructed as the orthogonal direct sum of an $s$--dimensional subspace of $\mathbf W$ and a $(k-s)$--dimensional subspace of $\mathbf W^{\perp}$. It follows that the set of global maximizers can be identified with a $G(s,k) \times G(k-s,n-k)$. To see that no other points can be critical, it is enough to observe that the geodesics connecting $\mathbf W$ and any other point $\mathbf E \in \Omega_s$ are entirely contained in $\Omega_s$, so that no other point apart from the global maximizers can be critical.\\

Now we ask the following question: what happens if we choose a point $\mathbf L \notin \Omega_s$ sufficiently close to $\mathbf W$? To answer this question, we first investigate the local structure of the subvariety $\Omega_s$ near its deepest singularity $\mathbf W$. \\

Let $D:=D(0,1)$ be the unit disk in $T_{\mathbf W}G(k,n)$ and $D_{\varepsilon}$ be the disk of radius $\varepsilon$. For every $\varepsilon >0$ define the map 
\begin{align}
\phi_{\varepsilon} : D \overset{\cdot \varepsilon}{\longrightarrow} D_{\varepsilon} \overset{\exp_{\mathbf W}}{\longrightarrow}B(\mathbf W, \varepsilon),
\end{align}
where $B(\mathbf W, \varepsilon)$ is the geodesic ball of radius $\varepsilon$ in $G(k,n)$ centered at $\mathbf W$. The family of maps $\phi_{\varepsilon}$ thus rescale the unit disk by a factor $\varepsilon$ and then apply the exponential  to map the rescaled vectors to $G(k,n)$. We can define a corresponding family of metrics $\{g_{\varepsilon}\}_{\varepsilon \geq0}$ on $D$ as
\begin{align}
    g_{\varepsilon}:=\frac{1}{\varepsilon^2}\phi_{\varepsilon}^*g,
\end{align}
where $g$ is the Grassmann metric on the geodesic ball $B(\mathbf W,\varepsilon)$ and $\phi_{\varepsilon}^*$ denotes the pullback via $\phi_{\varepsilon}$. The meaning of $g_{\varepsilon}$ is that it gives information on the distortion of the metric on $T_{\mathbf W}G(k,n)$ caused by the application of the exponential map. Denoting by $g_F$ the Frobenius metric on $T_{\mathbf W}G(k,n)$, we have the following result.
\begin{lemma}\label{metricconvergence}
    The family of metrics $g_{\varepsilon}$ converges $C^{\infty}$ to $g_F$ as $\varepsilon \longrightarrow 0$.
\end{lemma}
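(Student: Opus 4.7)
The plan is to unpack the definition of $g_\varepsilon$, rewrite it as the evaluation of a single fixed smooth tensor field at the rescaled point $\varepsilon v$, and then deduce $C^\infty$ convergence from Taylor's theorem applied to this fixed tensor field around the origin.

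First I would compute the differential of $\phi_\varepsilon$. Writing $\phi_\varepsilon(v)=\exp_{\mathbf W}(\varepsilon v)$ and using the canonical identification $T_v(T_{\mathbf W}G(k,n))\simeq T_{\mathbf W}G(k,n)$, the chain rule gives $(D_v\phi_\varepsilon)(w)=\varepsilon\,(D_{\varepsilon v}\exp_{\mathbf W})(w)$. Introducing the fixed pullback metric $h:=\exp_{\mathbf W}^{*}g$ on a neighborhood of $0$ in $T_{\mathbf W}G(k,n)$, a direct substitution shows
\[
g_\varepsilon|_v(w_1,w_2)=\tfrac{1}{\varepsilon^{2}}\,g_{\exp_{\mathbf W}(\varepsilon v)}\!\bigl(\varepsilon(D_{\varepsilon v}\exp_{\mathbf W})w_1,\varepsilon(D_{\varepsilon v}\exp_{\mathbf W})w_2\bigr)=h_{\varepsilon v}(w_1,w_2).
\]
In other words, $g_\varepsilon$ on $D$ is simply the smooth tensor field $h$ precomposed with dilation by $\varepsilon$. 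This is the conceptual heart of the statement and essentially trivializes the problem.

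Next I would identify the value of $h$ at the origin. A standard fact is $D_0\exp_{\mathbf W}=\mathrm{Id}_{T_{\mathbf W}G(k,n)}$, and the identification of $T_{\mathbf W}G(k,n)$ with $\R^{(n-k)\times k}$ from Definition~\ref{def:vA} is set up precisely so that the Grassmann metric $g_{\mathbf W}$ corresponds to the Frobenius scalar product (see \eqref{eq:gE}). Consequently $h_0=g_{\mathbf W}=g_{\mathrm F}$. Thus the constant tensor field $g_{\mathrm F}$ on $T_{\mathbf W}G(k,n)$ agrees with $h$ to zeroth order at the origin.

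Finally, I would establish the $C^\infty$ convergence on $D$. Fix an orthonormal basis of $T_{\mathbf W}G(k,n)$ (with respect to $g_{\mathrm F}$) and write $h_{ij}$ for the components of $h$ in this basis. Then the components of $g_\varepsilon-g_{\mathrm F}$ on $D$ are $v\mapsto h_{ij}(\varepsilon v)-\delta_{ij}$, and any iterated partial derivative of order $m\geq 0$ in $v$ equals
\[
\varepsilon^{m}\bigl(\partial^{m}h_{ij}\bigr)(\varepsilon v)-\mathds{1}_{m=0}\,\delta_{ij}.
\]
Since $h$ is smooth in a neighborhood of $0$ and $\varepsilon v$ stays in a compact subset of that neighborhood for $v\in D$ and $\varepsilon$ small, each $\partial^{m}h_{ij}$ is bounded on the relevant set; the case $m=0$ gives an $O(\varepsilon)$ error by continuity of $h_{ij}$ at $0$ (with $h_{ij}(0)=\delta_{ij}$), while $m\geq 1$ gives an $O(\varepsilon^{m})$ error. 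Uniform convergence to $0$ on the compact set $D$ of every order of derivative is precisely $C^\infty$ convergence $g_\varepsilon\to g_{\mathrm F}$. There is no serious obstacle here: the only nontrivial input is $D_0\exp_{\mathbf W}=\mathrm{Id}$ combined with the identification making $g_{\mathbf W}=g_{\mathrm F}$ at the origin, and the rest is Taylor expansion.
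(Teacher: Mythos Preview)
Your proof is correct and follows essentially the same approach as the paper: compute $D_v\phi_\varepsilon=\varepsilon\,D_{\varepsilon v}\exp_{\mathbf W}$, cancel the $\varepsilon^2$, and reduce to the fact that $D_0\exp_{\mathbf W}=\mathrm{Id}$. Your additional step of packaging this as $g_\varepsilon|_v=h_{\varepsilon v}$ with $h=\exp_{\mathbf W}^*g$ and then spelling out the derivative estimates simply makes explicit the $C^\infty$ convergence that the paper asserts in one line.
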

\begin{proof}
    Let $A \in D$, $B_1,B_2 \in T_AD\cong \R^{(n-k)\times k}$. Then
    \begin{align}
        (g_{\varepsilon})_A(B_1,B_2)=&\left(\frac{1}{\varepsilon^2}\phi_{\varepsilon}^*g\right)_A(B_1,B_2)=\, \frac{1}{\varepsilon^2}g_{\phi_{\varepsilon}(A)}(D_A\phi_{\varepsilon}(B_1),D_A\phi_{\varepsilon}(B_2))= \\
        =&\frac{1}{\varepsilon^2}g_{\phi_{\varepsilon}(A)}(\varepsilon D_{\varepsilon A} \exp_{\mathbf W}(B_1),\varepsilon D_{\varepsilon A} \exp_{\mathbf W}(B_2)) = \\
        =&g_{\phi_{\varepsilon}(A)}(D_{\varepsilon A} \exp_{\mathbf W}(B_1), D_{\varepsilon A} \exp_{\mathbf W}(B_2)).
    \end{align}
    Since $D_0\exp_{\mathbf W}$ is the identity, we obtain the wanted $C^{\infty}$ convergence. 
\end{proof}
\begin{proposition}\label{convergenceprop}
    There exists $\varepsilon >0$ such that for the generic $\mathbf L \in B(\mathbf W, \widetilde \varepsilon)$, $\mathbf L \notin \Omega_s$ with $\widetilde \varepsilon < \varepsilon$ and $\mathbf L = \exp_{\mathbf W}(v_A)$, the critical points for $\dist_{\mathbf L}|_{\Omega_s}$ contained in $B(\mathbf W, \varepsilon)$ are approximations of the critical points for $\widehat{\dist}_A|_{\mathcal M_{k-s}}$ coming from Eckart--Young theorem. 
\end{proposition}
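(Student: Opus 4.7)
The strategy is to pull the nearest--point problem back to the tangent space $T_\mathbf{W} G(k,n) \simeq \R^{(n-k)\times k}$ via $\exp_\mathbf{W}$, rescale so that the metric distortion is controlled by a single small parameter, and then combine the $C^\infty$ metric convergence from \cref{metricconvergence} with a standard perturbation argument for non--degenerate constrained critical points.

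Write $\mathbf{L} = \exp_\mathbf{W}(v_A)$ with $\|A\|_F = \widetilde{\varepsilon}$ and set $\widetilde{A} := A/\widetilde{\varepsilon}$, so $\|\widetilde A\|_F = 1$. By \cref{boundedrankomega}, critical points of $\dist_\mathbf{L}$ on $\Omega_s^{\mathrm{sm}} \cap B(\mathbf{W},\varepsilon)$ correspond via $\exp_\mathbf{W}$ to critical points of $F(X) := \dist(\exp_\mathbf{W}(v_X), \mathbf{L})$ constrained to $\mathcal M_{k-s}\cap D_\varepsilon$, where $D_\varepsilon$ denotes the Frobenius ball. Using the scale invariance $\lambda \mathcal M_{k-s} = \mathcal M_{k-s}$ and the substitution $X = \widetilde{\varepsilon} Y$, this becomes the problem of finding critical points on $\mathcal M_{k-s}\cap D_{\varepsilon/\widetilde{\varepsilon}}$ of $\widetilde F_{\widetilde\varepsilon}(Y) := \widetilde\varepsilon^{-1} F(\widetilde\varepsilon Y)$. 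A direct computation using the definition of pulled--back metric shows $\widetilde F_{\widetilde\varepsilon}(Y) = d_{g_{\widetilde\varepsilon}}(Y, \widetilde A)$, the distance in the rescaled metric from before \cref{metricconvergence}. By that lemma $g_{\widetilde\varepsilon} \to g_F$ in $C^\infty$ as $\widetilde\varepsilon \to 0$, and hence $\widetilde F_{\widetilde\varepsilon} \to \widetilde F_0(Y) := \|Y - \widetilde A\|_F$ in $C^\infty$ on compact subsets of $\{Y \neq \widetilde A\}$.

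By \cref{EY}, the critical points of $\widetilde F_0|_{\mathcal M_{k-s}}$ are precisely the $\binom{k}{k-s}$ rank--$(k-s)$ Eckart--Young truncations $\widetilde A_I$ of $\widetilde A$. For generic $\widetilde A$ (equivalently, generic $\mathbf{L}$), each $\widetilde A_I$ is non--degenerate in the Morse sense: the constrained bordered Hessian at $\widetilde A_I$ is invertible, as can be verified by a direct SVD computation exploiting the distinctness of the singular values of $\widetilde A$. A standard implicit--function--theorem argument applied to the Lagrange multiplier equations then yields, for $\widetilde\varepsilon$ small enough, a unique critical point of $\widetilde F_{\widetilde\varepsilon}|_{\mathcal M_{k-s}}$ in a fixed neighborhood of each $\widetilde A_I$, converging to $\widetilde A_I$ as $\widetilde\varepsilon \to 0$. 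A compactness argument, combined with the $C^\infty$ convergence $\widetilde F_{\widetilde\varepsilon} \to \widetilde F_0$, rules out stray critical points outside these neighborhoods in any fixed ball $D_\rho$ with $\rho > 1$. Undoing the rescaling $X = \widetilde\varepsilon Y$ and applying $\exp_\mathbf{W}$ delivers the claimed correspondence between critical points of $\dist_\mathbf{L}|_{\Omega_s}$ inside $B(\mathbf{W},\varepsilon)$ and Eckart--Young critical points of $\widehat{\dist}_A|_{\mathcal M_{k-s}}$.

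The main obstacle is establishing the Morse non--degeneracy of the Eckart--Young critical points for the \emph{constrained} problem on $\mathcal M_{k-s}$, a fact not covered by the classical statement of \cref{EY}. This requires an explicit computation of the bordered Hessian using the tangent--normal decomposition of $\mathcal M_{k-s}$ at $\widetilde A_I$, and ultimately reduces to the invertibility of a block matrix built from the differences $\sigma_i - \sigma_j$ of singular values; these are nonzero precisely on the Zariski open set where all singular values of $\widetilde A$ are distinct, which corresponds exactly to the genericity hypothesis on $\mathbf{L}$. Once this non--degeneracy is secured, the remainder is routine perturbation theory.
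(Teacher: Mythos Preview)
Your argument is correct and follows the same strategy the paper sketches: pull back via $\exp_{\mathbf W}$, use the $C^\infty$ convergence of the rescaled metrics from \cref{metricconvergence}, and invoke nondegeneracy of the limiting (Eckart--Young) critical points to run an implicit--function perturbation. The paper's own proof is explicitly labeled a sketch and simply asserts the nondegeneracy step; you have correctly isolated this as the only nontrivial ingredient and indicated how it reduces to the distinctness of singular values. One small point you leave implicit but the paper makes explicit: for $\varepsilon$ small enough the ball $B(\mathbf W,\varepsilon)$ avoids $\mathrm{cut}(\mathbf L)$ for every $\mathbf L\in B(\mathbf W,\widetilde\varepsilon)$ (since $\dist(\mathbf L,\mathrm{cut}(\mathbf L))=\tfrac{\pi}{2}$), which is what guarantees that $\dist_{\mathbf L}$ is smooth on the region under consideration and that your $C^\infty$ statements are meaningful.
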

\begin{remark}
    Given a matrix $A \in \R^{(n-k)\times k}$, all its rank-approximations coming from Eckart--Young theorem have norm smaller than $\|A\|$, as they are obtained by canceling out some of the singular values. 
\end{remark}
\begin{proof}[\textit{Proof of \cref{convergenceprop}}]We give a sketch of the proof.
    Observe that the distance between $\mathbf L$ and its cut locus is always $\frac{\pi}{2}$. Therefore for $\varepsilon$ small enough, the ball $B(\mathbf W,\varepsilon)$ will not intersect the cut locus of any $\mathbf L \in B(\mathbf W,\widetilde\varepsilon)$ for $\widetilde \varepsilon < \varepsilon$. The result then follows directly from \cref{metricconvergence} and the fact that critical points away from the cut locus are nondegenerate. 
\end{proof}
\begin{remark}By \cref{convergenceprop} we have that critical points close to $\mathbf W$ will be given by approximations of the Eckart--Young ones. But \cref{standardEYomegas} tells us that the critical points coming from Eckart-Young are actually critical points also in our setting, suggesting that the exponential mapping has no distortion effect on critical points. The reason why this happens, admittedly, is still not clear to the authors and will be subject to further investigation.  \end{remark}

As a corollary of \cref{convergenceprop} and \cref{standardEYomegas} we have that for $\mathbf L$ close enough to $\mathbf W$, the minimum for $\dist_{\mathbf L}|_{\Omega_s}$ is given by the minimum in Eckart--Young theorem, as no other critical points closer to $\mathbf L$ than the ones coming from \cref{standardEYomegas} can be found. In the next section we will see that this is a general fact: for the generic $\mathbf L$ the minimum will always be the one provided by \cref{standardEYomegas}.

\subsection{Global minimum}

We will now state a theorem that provides the global minimum for the restriction of $\dist_{\mathbf L}$ to $\Omega_s$, thus solving the nearest point problem for simple Schubert varieties.

\begin{theorem}\label{conjecture}
    For the generic $\mathbf L \notin \Omega_s$, the restriction of $\dist_{\mathbf L}$ to $\Omega_s$ admits a unique global minimizer, corresponding to the minimum given by the Eckart--Young Theorem through \cref{standardEYomegas}. Explicitly, if $0<\theta_1(\mathbf L)< \dots< \theta_k(\mathbf L) < \frac{\pi}{2}$ are the angles between $\mathbf L$ and $\mathbf W$ with principal vectors $\{p_1,\dots,p_k\}$ and $\{q_1,\dots,q_k\}$ for $\mathbf L$ and $\mathbf W$ respectively, we have 
    \begin{align}
        \min \dist_{\mathbf L}|_{\Omega_s} = \mleft(\theta_1(\mathbf L)^2+\dots+\theta_s(\mathbf L)^2\mright)^{\frac{1}{2}},
    \end{align}
    with the unique global minimizer being 
    \begin{align}\label{minimizer}
        \mathbf E_m = \mathrm{span}\{q_1,\dots,q_s,p_{s+1},\dots,p_k\}.
    \end{align}
\end{theorem}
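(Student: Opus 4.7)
The plan is to prove the sharp inequality $\dist(\mathbf{L}, \mathbf{E})^2 \geq \theta_1^2 + \dots + \theta_s^2$ (where $\theta_i := \theta_i(\mathbf{L}, \mathbf{W})$) for every $\mathbf{E} \in \Omega_s$, verify that $\mathbf{E}_m$ attains the bound, and then extract uniqueness from the equality case. That $\mathbf{E}_m$ realizes the candidate value is a direct computation: using $q_i = \cos(\theta_i)\,p_i + \sin(\theta_i)\,n_i$ with $n_i \in \mathbf{L}^\perp$, the set $\{q_1, \dots, q_s, p_{s+1}, \dots, p_k\}$ is easily seen to be orthonormal, with cross-Gram matrix against $\{p_1, \dots, p_k\}$ diagonal with entries $\cos\theta_1, \dots, \cos\theta_s, 1, \dots, 1$; the SVD recipe \eqref{anglesSVD} then gives principal angles $0, \dots, 0, \theta_1, \dots, \theta_s$ for $(\mathbf{L}, \mathbf{E}_m)$, hence $\dist(\mathbf{L}, \mathbf{E}_m)^2 = \theta_1^2 + \dots + \theta_s^2$ and $\mathbf{E}_m \in \Omega_s^{\mathrm{sm}}$.

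\textbf{The lower bound.} The heart of the proof is this inequality, which I would derive from two applications of Cauchy's interlacing theorem for singular values. For $\mathbf{E} \in \Omega_s$, let $\mathbf{U} := \mathbf{E} \cap \mathbf{W}$ with $\dim \mathbf{U} = s + r$, $r \geq 0$, and let $\varphi_1 \leq \dots \leq \varphi_{s+r}$ denote the principal angles of $(\mathbf{L}, \mathbf{U})$. Any orthonormal basis $B_U$ of $\mathbf{U}$ factors as $B_U = B_E C = B_W C'$, where $C, C'$ are matrices with orthonormal columns. Applying Cauchy interlacing to $A^\top B_U = (A^\top B_E) C$ and to $A^\top B_U = (A^\top B_W) C'$ (with $A$ an orthonormal basis of $\mathbf{L}$) and translating cosines back to angles yields
\[
\theta_i(\mathbf{L}, \mathbf{W}) \;\leq\; \varphi_i \;\leq\; \theta_{i + k - s - r}(\mathbf{L}, \mathbf{E}), \qquad i = 1, \dots, s + r.
\]
Restricting to $i = 1, \dots, s$ and summing squares produces
\[
\sum_{i=1}^{s} \theta_i^2 \;\leq\; \sum_{j=k-s-r+1}^{k-r} \theta_j(\mathbf{L}, \mathbf{E})^2 \;\leq\; \dist(\mathbf{L}, \mathbf{E})^2.
\]

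\textbf{Uniqueness.} Summing the same inequality up to $s + r$ instead of $s$ gives $\dist(\mathbf{L}, \mathbf{E})^2 \geq \theta_1^2 + \dots + \theta_{s+r}^2$, which is strictly larger than the candidate minimum when $r \geq 1$ (since $\theta_{s+1} > 0$ for generic $\mathbf{L}$). Hence equality forces $r = 0$, and then demands $\dim(\mathbf{L} \cap \mathbf{E}) = k - s$, $\varphi_i = \theta_i$, and the top-$s$ principal angles of $(\mathbf{L}, \mathbf{E})$ to equal $\theta_1, \dots, \theta_s$. The main remaining obstacle is to recover $\mathbf{E}$ uniquely from this data. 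The equality case of Cauchy interlacing, combined with distinctness of the $\cos\theta_i$, identifies $\mathbf{U} = \mathrm{span}\{q_1, \dots, q_s\}$. An iterated variational argument on the smallest $s$ singular values of $P_{\mathbf{L}}|_\mathbf{E}$ (which coincide with those of $P_{\mathbf{L}}|_\mathbf{U}$, and $\mathbf{U} \subseteq \mathbf{E}$) then forces the corresponding right singular vectors of $P_{\mathbf{L}}|_\mathbf{E}$ to lie in $\mathbf{U}$, hence to equal $q_1, \dots, q_s$ up to sign; the corresponding left singular vectors are $p_1, \dots, p_s$. Orthogonality of the angle-$0$ principal vectors of $(\mathbf{L}, \mathbf{E})$ to $p_1, \dots, p_s$ then gives $\mathbf{L} \cap \mathbf{E} \subseteq \mathrm{span}\{p_{s+1}, \dots, p_k\}$, and dimension counting gives equality. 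Therefore $\mathbf{E} = \mathbf{U} + (\mathbf{L} \cap \mathbf{E}) = \mathbf{E}_m$.
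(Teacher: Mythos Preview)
Your proof is correct and follows essentially the same route as the paper's: both take $\mathbf{U}=\mathbf{E}\cap\mathbf{W}$, apply the singular--value interlacing inequality twice (once for $\mathbf{U}\subseteq\mathbf{W}$, once for $\mathbf{U}\subseteq\mathbf{E}$) to obtain $\theta_i(\mathbf{L},\mathbf{W})\leq\varphi_i\leq\theta_{i+k-\dim\mathbf{U}}(\mathbf{L},\mathbf{E})$, sum squares to get the lower bound, and then read off uniqueness from the equality case using distinctness of the $\theta_i(\mathbf{L})$. The paper packages the interlacing step as a separate corollary (\cref{anglescorollary}, derived from \cite[Corollary~3.1.3]{matrixanalysis}), whereas you invoke Cauchy interlacing directly, and your uniqueness analysis of the equality case is slightly more explicit about identifying the singular vectors; but the logical structure is the same.
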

The proof will use the following proposition (\cite[Corollary 3.1.3]{matrixanalysis}).
\begin{proposition}[{\cite[Corollary 3.1.3]{matrixanalysis}}]\label{cor313}
    Let $B \in \R^{n\times k}$ and let $B_r$ denote a submatrix of $B$ obtained by deleting $r$ columns from $B$. Ordering the singular values of a matrix in decreasing order, we have 
    \begin{align}
        \sigma_i(B)\geq \sigma_i(B_r) \geq \sigma_{i+r}(B),
    \end{align}
    for every $i=1,\dots,k$, where for a matrix $X \in \R^{p\times q}$ we set $\sigma_j(X)=0$ if $j>\min\{p,q\}$.
\end{proposition}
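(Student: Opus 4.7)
The plan is to derive the bounds from the Cauchy interlacing theorem applied to the Gram matrix $B^\top B$.

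First I would translate the problem into one about eigenvalues. Deleting the $r$ columns of $B$ indexed by $S \subseteq \{1,\dots,k\}$ produces, at the level of Gram matrices, the principal $(k-r)\times(k-r)$ submatrix of $B^\top B$ obtained by deleting the same $r$ rows and columns; this submatrix is exactly $B_r^\top B_r$. Since $\sigma_i(B)^2 = \lambda_i(B^\top B)$ and $\sigma_i(B_r)^2 = \lambda_i(B_r^\top B_r)$ (with eigenvalues in decreasing order), the desired inequalities become the Cauchy interlacing inequalities
\begin{equation*}
\lambda_i(B^\top B) \;\ge\; \lambda_i(B_r^\top B_r) \;\ge\; \lambda_{i+r}(B^\top B),
\end{equation*}
valid for $1 \le i \le k - r$.

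To prove these I would use the Courant--Fischer min-max characterization
\begin{equation*}
\lambda_i(M) = \max_{\substack{V \subseteq \R^k \\ \dim V = i}} \min_{\substack{v \in V \\ \|v\|=1}} v^\top M v,
\end{equation*}
applied to $M = B^\top B$. Let $W \subseteq \R^k$ be the coordinate subspace spanned by the standard basis vectors indexed by $\{1,\dots,k\} \setminus S$, so that $W \cong \R^{k-r}$ and, under this identification, the restriction of the quadratic form $v \mapsto v^\top M v$ to $W$ coincides with $u \mapsto u^\top (B_r^\top B_r) u$. For the upper bound, the max defining $\lambda_i(B_r^\top B_r)$ ranges over the strictly smaller family of $i$-dimensional subspaces contained in $W$, so it cannot exceed $\lambda_i(B^\top B)$. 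For the lower bound, a dimension count shows that any $V \subseteq \R^k$ with $\dim V = i + r$ satisfies $\dim(V \cap W) \ge (i+r) + (k-r) - k = i$; picking $V' \subseteq V \cap W$ of dimension exactly $i$, we have
\begin{equation*}
\min_{\substack{v \in V \\ \|v\|=1}} v^\top M v \;\le\; \min_{\substack{v \in V' \\ \|v\|=1}} v^\top M v \;\le\; \lambda_i(B_r^\top B_r),
\end{equation*}
and taking the maximum over $V$ yields $\lambda_{i+r}(B^\top B) \le \lambda_i(B_r^\top B_r)$. Taking nonnegative square roots recovers the statement about singular values.

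There is no real obstacle to this argument; the only care needed is bookkeeping the convention $\sigma_j(X) = 0$ when $j$ exceeds the smaller matrix dimension. This makes the edge cases automatic: when $i > k - r$ the upper bound collapses to the trivial $\sigma_i(B) \ge 0$, and when $i + r > k$ the lower bound collapses to $\sigma_i(B_r) \ge 0$.
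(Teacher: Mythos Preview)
Your proof is correct. The paper does not actually prove this proposition; it is quoted verbatim as \cite[Corollary 3.1.3]{matrixanalysis} and used as a black box in the proof of \cref{anglescorollary}. Your argument via Courant--Fischer and Cauchy interlacing for the Gram matrix $B^\top B$ is the standard route to this result and is exactly how it is established in the cited reference, so there is nothing to compare.
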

Recall also that the algorithm used to define principal vectors and angles can be extended to define principal vectors and angles between subspaces of different dimensions exactly in the same way as it was done in \cite{Lim}.  Here we need a  modification of \cite[Corollary 10]{Lim}, that we state using the same notation of \cite{Lim}.
\begin{corollary}\label{anglescorollary}
    Let $\mathbf A \in G(k,n)$, $\mathbf B \in G(k,n)$ and $\widetilde{\mathbf B} \in G(\widetilde k, n)$ such that $\widetilde k \leq k$ and $\widetilde{\mathbf B}\subseteq \mathbf B$. Then we have 
    \begin{align}\label{anglesinequalities}
        \theta_i(\mathbf A, \mathbf B) \leq \theta_i(\mathbf A,\widetilde{\mathbf B}) \leq \theta_{i+k-\widetilde k}(\mathbf A, \mathbf B),
    \end{align}
    for every $i=1,\dots,\widetilde k$.
\end{corollary}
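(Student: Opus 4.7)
The plan is to reduce Corollary~\ref{anglescorollary} to the interlacing inequalities for singular values provided by Proposition~\ref{cor313}, using the SVD characterization of principal angles recalled in \eqref{anglesSVD}, extended to subspaces of possibly different dimensions as in \cite{Lim}.

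The key setup goes as follows. Because $\widetilde{\mathbf B}\subseteq \mathbf B$, I can fix an orthonormal basis of $\widetilde{\mathbf B}$ and extend it to an orthonormal basis of $\mathbf B$. Assembling these bases as columns, I obtain matrices $\widetilde B\in\R^{n\times \widetilde k}$ and $B\in\R^{n\times k}$ such that $\widetilde B$ is a submatrix of $B$ obtained by deleting $r:=k-\widetilde k$ columns. Choosing also an orthonormal basis $A\in\R^{n\times k}$ of $\mathbf A$ and left-multiplying commutes with column deletion, so $A^\top \widetilde B$ is a $k\times \widetilde k$ submatrix of the $k\times k$ matrix $A^\top B$ obtained by deleting the same $r$ columns.

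Next, I apply Proposition~\ref{cor313} to these two matrices with $r=k-\widetilde k$, which gives, for every $i=1,\dots,\widetilde k$,
\[\sigma_i(A^\top B)\ \geq\ \sigma_i(A^\top \widetilde B)\ \geq\ \sigma_{i+k-\widetilde k}(A^\top B),\]
with singular values arranged in decreasing order. By the SVD characterization of principal angles, these singular values equal $\cos\theta_i(\mathbf A,\mathbf B)$, $\cos\theta_i(\mathbf A,\widetilde{\mathbf B})$ and $\cos\theta_{i+k-\widetilde k}(\mathbf A,\mathbf B)$ respectively. All three lie in $[0,1]$ and, since $\arccos$ is strictly decreasing on $[0,1]$ with values in $[0,\pi/2]$, taking $\arccos$ reverses the inequalities and yields exactly \eqref{anglesinequalities}; the ordering conventions match because principal angles are ordered increasingly while singular values are ordered decreasingly.

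The only mildly subtle point, rather than a real obstacle, is verifying that the identity $\cos\theta_i(\mathbf A,\widetilde{\mathbf B})=\sigma_i(A^\top \widetilde B)$ holds for subspaces of different dimensions: this is precisely the content of the extension of Definition~\ref{def:principal} discussed in \cite{Lim}, and the usual SVD argument for the equal-dimensional case carries over verbatim, producing $\min\{k,\widetilde k\}=\widetilde k$ cosines on the diagonal. Once this identification is in hand, the whole statement follows by a direct application of Proposition~\ref{cor313} and the monotonicity of $\arccos$.
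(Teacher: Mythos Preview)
Your proof is correct and follows essentially the same approach as the paper: extend an orthonormal basis of $\widetilde{\mathbf B}$ to one of $\mathbf B$, observe that $A^\top\widetilde B$ is obtained from $A^\top B$ by deleting $k-\widetilde k$ columns, apply Proposition~\ref{cor313}, and use the monotonicity of $\arccos$ together with the SVD characterization of principal angles. The paper's proof is identical in structure and detail.
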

\begin{proof}
    Let $\{w_1,\dots,w_{\widetilde k}\}$ be an orthonormal basis for $\widetilde{\mathbf B}$ and complete it to an orthonormal basis of $\mathbf B$ $\{w_1,\dots,w_k\}$. Let $\widetilde B =(w_1 \dots w_{\widetilde k})$ and $B=(w_1 \dots w_k)$. Let $A =(a_1 \dots a_k) \in \R^{n\times k}$ be a matrix whose columns are an orhtonormal basis of $\mathbf A$. Then we know that  
    \begin{align}\label{arccosangle}
        \theta_i(\mathbf A, \mathbf B)&=\arccos(\sigma_i(A^{\top}B)) \quad i=1,\dots,k, \\ \notag
        \theta_i(\mathbf A,\widetilde{\mathbf B})&=\arccos(\sigma_i(A^{\top}\widetilde B)) \quad i=1,\dots,\widetilde k.
    \end{align}
    Since $A^{\top}\widetilde B$ is obtained from $A^{\top}B$ by deleting $k-\widetilde k$ columns, by \cref{cor313} we have 
    \begin{align} \label{sigmaineq}
        \sigma_i(A^{\top}B) \geq \sigma_i(A^{\top}\widetilde B) \geq \sigma_{i+k-\widetilde k}(A^{\top}B),
    \end{align}
    for every $i=1,\dots,\widetilde k$. Since all the singular values are in $[0,1]$ and $\arccos$ is descreasing on that interval, from \eqref{arccosangle} and \eqref{sigmaineq} we obtain \eqref{anglesinequalities}. 
\end{proof}
\begin{proof}[Proof of \cref{conjecture}]
    Let $\mathbf E \in \Omega_s$ and denote $\mathbf V:=\mathbf E \cap \mathbf W$. Since $\mathbf E \in \Omega_s$, we have $\mathrm{dim}(\mathbf V)=\widetilde s \geq s$. Let $0\leq\theta_1(\mathbf V,\mathbf L) \leq\dots \leq \theta_{\widetilde s}(\mathbf V,\mathbf L)\leq \frac{\pi}{2}$ be the principal angles between $\mathbf V$ and $\mathbf L$. Recall that 
    \begin{align}
        \dist_{\mathbf L}(\mathbf E) = \mleft(\theta_1(\mathbf E,\mathbf L)^2+\dots+\theta_k(\mathbf E,\mathbf L)^2\mright)^{\frac{1}{2}}.
    \end{align}
    Since $\mathbf V$ is a subspace of $\mathbf W$, the left side of \eqref{anglesinequalities} implies that
    \begin{align}
        \theta_i(\mathbf L) \leq \theta_i(\mathbf V,\mathbf L) \qquad i=1,\dots,\widetilde s.
    \end{align}
    But $\mathbf V$ can also be regarded as a subspace of $\mathbf E$, hence the right hand side of \eqref{anglesinequalities} implies that 
    \begin{align}
        \theta_i(\mathbf V,\mathbf L) \leq \theta_{i+k-\widetilde s}(\mathbf E,\mathbf L) \qquad i=1,\dots,\widetilde s.
    \end{align}
    The two inequalities above together give
    \begin{align}
        \theta_1(\mathbf L) &\leq \theta_{k-\widetilde s +1}(\mathbf E,\mathbf L) \\
        &\vdots \\
        \theta_{\widetilde s}(\mathbf L) &\leq \theta_k(\mathbf E, \mathbf L).
    \end{align}
     In turn, this gives the inequality
    \begin{align}
        \notag \dist_{\mathbf L}(\mathbf E)&= \mleft(\theta_1(\mathbf E,\mathbf L)^2+\dots+\theta_k(\mathbf E,\mathbf L)^2\mright)^{\frac{1}{2}} \\ \notag &\geq \mleft(\theta_1(\mathbf E,\mathbf L)^2+\dots+\theta_{k-\widetilde s}(\mathbf E,\mathbf L)^2+\theta_1(\mathbf L)^2+\dots+\theta_{\widetilde s}(\mathbf L)^2\mright)^{\frac{1}{2}} \\ \notag
        &\geq \mleft(\theta_1(\mathbf L)^2+\dots+\theta_{\widetilde s}(\mathbf L)^2\mright)^{\frac{1}{2}} \\ \label{51}
        &\geq \mleft(\theta_1(\mathbf L)^2+\dots+\theta_s(\mathbf L)^2\mright)^{\frac{1}{2}}
    \end{align}
It is easy to check that the plane $\mathbf E_m$ defined in \eqref{minimizer} achieves this lower bound by computing principal angles in the usual way, showing that 
\begin{align}
    \dist_{\mathbf L}(\mathbf E_m)=\min \dist_{\mathbf L}|_{\Omega_s} = \mleft(\theta_1(\mathbf L)^2+\dots+\theta_s(\mathbf L)^2\mright)^{\frac{1}{2}}.
\end{align}
Moreover, by construction, $\mathbf E_m$ is the point obtained in \cref{standardEYomegas} by choosing the approximation corresponding to the minimizer provided by \cref{EY}. 

To prove that $\mathbf E_m$ is the unique minimizer, we start by observing that since all the angles $\theta_i(\mathbf L)$ are positive, if $\widetilde s > s$ the inequality \eqref{51} is strict and the minimum cannot be achieved. Therefore for every minimizer it holds that $\widetilde s=s$, i.e. it is a smooth point of $\Omega_s$. 

Another necessary condition for the minimum to be achieved by a point $\mathbf E$ is that 
\begin{align}
    \theta_1(\mathbf E,\mathbf L)=\dots=\theta_{k-s}(\mathbf E,\mathbf L)=0,
\end{align}
which means that $\mathrm{\dim}(\mathbf E \cap \mathbf L)=k-s$. Together with $\mathbf E \in \Omega_s$ and the intersection between $\mathbf L$ and $\mathbf W$ being trivial, this implies that $\mathbf E$ must be an orthogonal direct sum
\begin{align}
    \mathbf E=\mathbf E_{\mathbf L} \oplus \mathbf E_{\mathbf W},
\end{align}
where $\mathbf E_{\mathbf L}$ is a $(k-s)$--dimensional subspace of $\mathbf L$ and $\mathbf E_{\mathbf W}$ is an $s$--dimensional subspace of $\mathbf W$. By construction, for a minimizer we have that 
\begin{align}
    \theta_{k-s+1}(\mathbf E,\mathbf L)&=\theta_1(\mathbf E_{\mathbf W},\mathbf L) = \theta_1(\mathbf L), \\ 
    & \ \, \vdots \\
    \theta_k(\mathbf E,\mathbf L)&=\theta_s(\mathbf E_{\mathbf W},\mathbf L)=\theta_s(\mathbf L).
\end{align}
Since $\mathbf E_{\mathbf W}\subseteq \mathbf W$ and every angle $\theta_i(\mathbf L)$ has multiplicitly one, it follows that $\mathbf E_{\mathbf W}$ is uniquely determined as the span of the principal vectors of $\mathbf W$ corresponding to the angles $\theta_1(\mathbf L),\dots,\theta_s(\mathbf L)$ with respect to $\mathbf L$, that is 
\begin{align}
    \mathbf E_{\mathbf W}=\mathrm{span}\{q_1,\dots,q_s\}.
\end{align}
Then $\mathbf E_{\mathbf L}$ is uniquely determined as 
\begin{align}
    \mathbf E_{\mathbf L}=\mathbf L \cap (\mathbf E_{\mathbf W})^{\perp} = \mathrm{span}\{p_{s+1},\dots,p_k\}.
\end{align}
It follows that the minimizer $\mathbf E_m$ is unique. 
\end{proof}
\subsection{Global maximum}Using \cref{anglescorollary} in a different order than in above proof and the idea explained in \cref{maximaremark}, we can also find explicitly the global maximizers for $\dist_{\mathbf L}|_{\Omega_s}$. In particular we will see that there can be infinitely many of them. This is not in contrast with the finiteness of the Grassmann Distance Complexity, since they all belong to the cut locus of the external point $\mathbf L$. Moreover, one could check that no geodesic from $\mathbf L$ to a global maximum is normal to $\Omega_s$, as we expect from \cref{normalexplemma}. 
\begin{theorem}\label{maximatheorem}
    For the generic $\mathbf L \notin \Omega_s$, the restriction of $\dist_{\mathbf L}$ to $\Omega_s$ admits a Grassmannian $G(k-s,n-k-s)$ of global maximizers, which can be explicitly computed. If $0<\theta_1(\mathbf L)<\dots<\theta_k(\mathbf L)<\frac{\pi}{2}$ are the principal angles between $\mathbf L$ and $\mathbf W$, we have 
    \begin{align}
        \max \dist_{\mathbf L}|_{\Omega_s} = \mleft(\theta_{k-s+1}(\mathbf L)^2+\dots+\theta_k(\mathbf L)^2+(k-s)\frac{\pi}{4}^2\mright)^{\frac{1}{2}}.
    \end{align}
\end{theorem}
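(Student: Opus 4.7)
The approach mirrors the proof of \cref{conjecture}, but applies the interlacing of \cref{anglescorollary} to obtain an upper bound on the distance and then constructs a family of subspaces saturating it.

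For the upper bound, let $\mathbf{E} \in \Omega_s$ and set $\mathbf{V} := \mathbf{E} \cap \mathbf{W}$, of dimension $\widetilde{s} \geq s$. Using $\mathbf{V} \subset \mathbf{E}$ on the left and $\mathbf{V} \subset \mathbf{W}$ on the right in \cref{anglescorollary} gives, for $i = 1, \ldots, \widetilde{s}$,
\[
\theta_i(\mathbf{L}, \mathbf{E}) \leq \theta_i(\mathbf{L}, \mathbf{V}) \leq \theta_{i + k - \widetilde{s}}(\mathbf{L}).
\]
The remaining $k - \widetilde{s}$ principal angles of $\mathbf{E}$ with $\mathbf{L}$ are at most $\pi/2$, so
\[
\dist_{\mathbf{L}}(\mathbf{E})^2 \leq \sum_{j = k - \widetilde{s} + 1}^{k} \theta_j(\mathbf{L})^2 + (k - \widetilde{s}) \left(\tfrac{\pi}{2}\right)^2.
\]
Since each $\theta_j(\mathbf{L}) < \pi/2$, the right-hand side strictly decreases in $\widetilde{s}$, so the bound is largest when $\widetilde{s} = s$, yielding the value claimed in the statement.

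To see the bound is attained, set $\mathbf{V}_0 := \mathrm{span}(p_{k-s+1}, \ldots, p_k) \subset \mathbf{W}$, the principal subspace of $\mathbf{W}$ for the $s$ largest angles with $\mathbf{L}$. For generic $\mathbf{L}$ we have $\mathbf{V}_0 \cap \mathbf{L} \subset \mathbf{W} \cap \mathbf{L} = \{0\}$, so $\dim(\mathbf{L}^\perp \cap \mathbf{V}_0^\perp) = n - (k+s)$, which by the hypothesis $k \leq n-k$ is at least $k - s$. For any $(k-s)$-dimensional $\mathbf{U} \subset \mathbf{L}^\perp \cap \mathbf{V}_0^\perp$, define $\mathbf{E}_{\mathbf{U}} := \mathbf{V}_0 \oplus \mathbf{U}$. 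Taking the concatenated orthonormal basis $\{p_{k-s+1}, \ldots, p_k, u_1, \ldots, u_{k-s}\}$ of $\mathbf{E}_{\mathbf{U}}$ and $\{q_1, \ldots, q_k\}$ of $\mathbf{L}$, the inner-product matrix is block-diagonal with blocks $\mathrm{diag}(\cos \theta_{k-s+1}(\mathbf{L}), \ldots, \cos \theta_k(\mathbf{L}))$ and $0$, so the principal angles are exactly $\theta_{k-s+1}(\mathbf{L}), \ldots, \theta_k(\mathbf{L}), \pi/2, \ldots, \pi/2$, realizing the bound. Moreover $\mathbf{E}_{\mathbf{U}} \cap \mathbf{W} = \mathbf{V}_0$ (since $\mathbf{U} \cap \mathbf{W} \subset \mathbf{L}^\perp \cap \mathbf{W} = \{0\}$ generically), so $\mathbf{E}_{\mathbf{U}} \in \Omega_s^{\mathrm{sm}}$. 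The set of such $\mathbf{U}$ is a copy of $G(k-s, n-k-s)$.

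For uniqueness of this family, consider any maximizer $\mathbf{E}$. Saturation of the upper bound forces $\widetilde{s} = s$, $\dim(\mathbf{E} \cap \mathbf{L}^\perp) = k - s$, and equality on both sides of the interlacing for every $i$. The right equality $\theta_i(\mathbf{L}, \mathbf{V}) = \theta_{k-s+i}(\mathbf{L})$ together with $\mathbf{V} \subset \mathbf{W}$ and the distinctness of the $\theta_j(\mathbf{L})$ identifies $\mathbf{V}$, via the minimax characterization of singular values, with $\mathbf{V}_0$. The left equality $\theta_i(\mathbf{L}, \mathbf{V}) = \theta_i(\mathbf{L}, \mathbf{E})$ forces $\mathbf{V}$ to be spanned by the principal vectors of $\mathbf{E}$ for the $s$ smallest angles with $\mathbf{L}$, hence orthogonal to the principal vectors for the $\pi/2$ angles, which together span $\mathbf{E} \cap \mathbf{L}^\perp$. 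Thus $\mathbf{E} = \mathbf{V}_0 \oplus \mathbf{U}$ orthogonally, with $\mathbf{U} \subset \mathbf{L}^\perp \cap \mathbf{V}_0^\perp$, and $\mathbf{E}$ belongs to the constructed family. The main obstacle is precisely this uniqueness step, which requires a careful use of the equality cases in the Cauchy-type interlacing of singular values underlying \cref{anglescorollary}.
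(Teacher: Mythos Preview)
Your proof is correct and follows essentially the same route as the paper: the upper bound via the two-sided interlacing of \cref{anglescorollary} applied to $\mathbf V=\mathbf E\cap\mathbf W$ (as a subspace of $\mathbf E$ on one side and of $\mathbf W$ on the other), the construction of maximizers as $\mathbf V_0\oplus\mathbf U$ with $\mathbf U\subset\mathbf L^\perp\cap\mathbf V_0^\perp$, and the uniqueness argument are all the same as in the paper. If anything, your uniqueness step is slightly more explicit than the paper's in deducing the orthogonal decomposition $\mathbf E=\mathbf V_0\oplus(\mathbf E\cap\mathbf L^\perp)$ from the equality case of the left interlacing inequality.
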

\begin{proof}
    Let $\mathbf E \in \Omega_s$ and denote $\mathbf V:= \mathbf E \cap \mathbf W$. Since $\mathbf E \in \Omega_s$, we have $\mathrm{dim}(\mathbf V)=\widetilde s\geq s$. \\
    Since $\mathbf V$ is a subspace of $\mathbf E$, the left hand side of \eqref{anglesinequalities} implies that 
    \begin{align}
        \theta_i(\mathbf L, \mathbf E) \leq \theta_i(\mathbf L,\mathbf V) \ \ i=1,\dots,\widetilde s.
    \end{align}
    But $\mathbf V$ is also a subspace of $\mathbf W$, so that the right hand side of \eqref{anglesinequalities} gives 
    \begin{align}
        \theta_i(\mathbf L,\mathbf V) \leq \theta_{i+k-\widetilde s}(\mathbf L) \ \ i=1,\dots,\widetilde s.
    \end{align}
    These inequalities together give 
    \begin{align}
        \theta_1(\mathbf L,\mathbf E)&\leq \theta_{k-\widetilde s +1}(\mathbf L) < \frac{\pi}{2}, \\ 
        & \ \ \vdots \\
        \theta_{\widetilde s}(\mathbf L,\mathbf E) &\leq \theta_k(\mathbf L) < \frac{\pi}{2},
    \end{align}
    and the trivial bounds for the remaining angles $\theta_j(\mathbf L,\mathbf E)\leq \frac{\pi}{2}$ for $j=\widetilde s+1,\dots,k$. We have the following chain of inequalities
    \begin{align}
        \dist^2_{\mathbf L}(\mathbf E)&=\theta_1(\mathbf L,\mathbf E)^2+\dots+\theta_k(\mathbf L,\mathbf E)^2 \\
        &\leq \theta_{k-\widetilde s+1}(\mathbf L)^2 + \dots + \theta_k(\mathbf L)^2 + \theta_{\widetilde s +1}(\mathbf L, \mathbf E)^2+\dots+\theta_k(\mathbf L, \mathbf E)^2 \\
        &\leq \theta_{k-\widetilde s+1}(\mathbf L)^2 + \dots + \theta_k(\mathbf L)^2 + (k-\widetilde s)\,\frac{\pi}{4}^2 \\
        &\leq \theta_{k-s+1}(\mathbf L)^2 + \dots + \theta_k(\mathbf L)^2 + (k-s)\,\frac{\pi}{4}^2,
    \end{align}
    where the last inequality follows from $s \leq \widetilde s$ and all angles being bounded by $\frac{\pi}{2}$; moreover, since by genericity $\theta_j(\mathbf L)< \frac{\pi}{2}$ for every $j=1,\dots,k$, this inequality is strict if $s < \widetilde s$. Since these inequalities hold for every $\mathbf E \in \Omega_s$, it follows that 
    \begin{align}\label{maxbound}
        \max \dist_{\mathbf L}|_{\Omega_s}\leq \mleft(\theta_{k-s+1}(\mathbf L)^2 + \dots + \theta_k(\mathbf L)^2 + (k-s)\,\frac{\pi}{4}^2\mright)^{\frac{1}{2}}.
    \end{align}
    We will now build a family of points in $\Omega_s$ achieving this bound, proving that it is the maximum. \\
    As already pointed out, in order for the inequality to not be strict, we need to have $\widetilde s=s$, that is $\mathbf E$ has to be a smooth point of $\Omega_s$. Moreover we need 
    \begin{align}
        \theta_1(\mathbf L,\mathbf E) &= \theta_{k-s+1}(\mathbf L) \\ 
        & \ \ \vdots \\
        \theta_s(\mathbf L,\mathbf E) &= \theta_k(\mathbf L) \\
        \theta_{s+1}(\mathbf L,\mathbf E)=&\dots=\theta_k(\mathbf L,\mathbf E)=\frac{\pi}{2}.
    \end{align}
    In particular $\mathrm{dim}(\mathbf E \cap \mathbf L^{\perp})=k-s$. This forces $\mathbf E$ to be a direct sum 
    \begin{align}
        \mathbf E = \mathbf E_{\mathbf L^{\perp}} \oplus \mathbf E_{\mathbf W},
    \end{align}
    where $\mathbf E_{\mathbf L^{\perp}}$ is a $k-s$-dimensional subspace of $\mathbf L^{\perp}$ and $\mathbf E_{\mathbf W}$ is a $s$-dimensional subspace of $\mathbf W$. For such an $\mathbf E$ we can see that 
    \begin{align}
        \theta_i(\mathbf L,\mathbf E)=\theta_i(\mathbf L,\mathbf E_{\mathbf W})=\theta_{k-s+i}(\mathbf L)
    \end{align}
    for every $i=1,\dots,s$. Since every $\theta_j(\mathbf L)$ has multiplicity one and these are the maximal ones, $\mathbf E_{\mathbf W}$ is uniquely determined as the subspace corresponding to the last $s$ principal vectors of $\mathbf W$ with respect to $\mathbf L$. Denote them as $\{q_{k-s+1},\dots,q_k\}$. To build $\mathbf E_{\mathbf L^{\perp}}$ we have to select a $(k-s)$--dimensional subspace of 
    \begin{align}
        \mathbf A:=\mathbf L^{\perp} \cap \langle q_{k-s+1},\dots,q_k\rangle^{\perp}.
    \end{align}
    The space $\mathbf A$ has generically dimension $n-k-s$. Let $G(k-s,\mathbf A) \cong G(k-s,n-k-s)$ be the Grassmannian of $k-s$-subspaces in $\mathbf A$ and pick any $\mathbf B \in G(k-s,\mathbf A)$. Define $\mathbf E_{\mathbf B}:= \mathbf B \oplus \mathbf E_{\mathbf W} = \mathbf B \oplus \langle q_{k-s+1},\dots,q_k \rangle$. It is easy to check via the usual algorithm that for every choice of $\mathbf B \in G(k-s,\mathbf A)$ the plane $\mathbf E_{\mathbf B}$ achieves the bound \eqref{maxbound}. These are the unique ones achieving it, as their construction highlighted. 
\end{proof}

\begin{remark}
    From the proof of \cref{conjecture} it follows that the minimum distance from $\mathbf L$ achieved on the strata of $\Omega_s$ increases strictly every time we go to a deeper singular stratum. In the same way, from the proof of \cref{maximatheorem} we see that the maximum distance strictly decreases. The geometric idea behind this phenomenon is that every time we move to a deeper stratum, we are increasing the constraints on the intersection between the points and $\mathbf W$, which reflects on more constraints on the principal angles between those points and $\mathbf L$. We can also explicitly describe what the minimizers and maximizers are on each stratum, just reading through the proofs above.
\end{remark}

\section{Table of notations}
\begin{xltabular}{\textwidth}{p{0.19\textwidth} X}
$\langle A_1,A_2\rangle_F$ \dotfill & Frobenius scalar product between the two matrices $A_1$ and $A_2$. \\
$\widehat{\dist}(A_1, A_2)$ \dotfill & distance between two matrices $A_1, A_2\in \R^{n\times m}$ induced by the Frobenius scalar product. \\
$\widehat{\dist}_A:\R^{n\times m}\to \R$  & distance function from $A\in \R^{n\times m}$, defined by $\widehat{\dist}_A(X):=\|X-A\|_F.$ \\
$\mathcal{M}_{\leq r}$ \dotfill & Matrices of rank at most $r$ (\cref{def:rankconstraint}).\\
$\mathcal{M}_{r}$ \dotfill & Matrices of rank $r$ (\cref{def:rankconstraint}).\\
$A_I$\dotfill& for an index set $I\subseteq \{1,\ldots, m\}$ the $n\times m$ matrix defined as in \cref{EY}.\\
$\sigma_1(A),\dots,\sigma_m(A)$& singular values of a matrix $A\in \R^{n\times m}$ with $m\leq n$.\\
$G(k,n)$ \dotfill & The real Grassmannian of $k$--planes in $\R^n$.\\
$[v_1 \dots v_k]$\dotfill& $k$--plane spanned by the orthonormal vectors $v_1, \ldots, v_k\in \R^n$.\\
$\mathbf{E}, \mathbf{L}, \mathbf{W}, \mathbf{S}$ \dotfill & elements of the Grassmannian $G(k,n)$.\\
$\widehat{E}$\dotfill & representative in $O(n)$ of a point $\mathbf{E}\in G(k,n)\simeq O(n)/O(k)\times O(n-k)$.\\
$M_A$\dotfill & $n\times n$ matrix defined through the matrix $A\in\R^{(n-k)\times k}$ as in \eqref{eq:MA}.\\
$v_A$\dotfill & a tangent vector to $G(k,n)$ defined through the matrix $A\in\R^{(n-k)\times k}$ as in \cref{def:vA}.\\
$\varphi_A$ \dotfill &an element of  $\mathrm{Hom}(\mathbf{E}, \mathbf{E}^\perp)$ defined through the matrix $A\in\R^{(n-k)\times k}$ as in \cref{def:hom}.\\
$g_\mathbf{E}$\dotfill&the Grassmann Riemannian metric at the point $\mathbf{E}\in G(k,n)$ (\cref{def:riemmetric}).\\
$e^X$\dotfill& Exponential of the matrix $X\in \R^{n\times n}.$\\
$ \exp_{\mathbf E}$\dotfill & Riemannian exponential of $G(k,n)$ at the point $\mathbf{E}\in G(k,n)$.\\
$\theta_i(\mathbf{E}_1, \mathbf{E}_2)$\dotfill& $i$--th principal angle between the two $k$--planes $\mathbf{E}_1, \mathbf{E}_2\in G(k,n)$ (\cref{def:principal}).\\
$\dist(\mathbf{E}_1, \mathbf{E}_2)$ \dotfill & Grassmann Riemannian distance between two $k$--planes $\mathbf{E}_1, \mathbf{E}_2\in G(k,n)$ induced by the Grassmann Riemannian metric (\cref{def:Gdist}).\\
$R_{\frac{\pi}{2}}$\dotfill& the set of matrices $A\in \R^{(n-k)\times k}$ with singular values bounded by $\frac{\pi}{2}.$\\
$\dist_{\mathbf{L}}:G(k,n)\to \R$ & distance function from $\mathbf{L}\in G(k,n)$, defined by $\dist_{\mathbf{L}}(\mathbf{E}):=\dist(\mathbf{L}, \mathbf{E}).$ \\
$\partial_pf\dotfill$ & subdifferential at $p$ of a locally Lipschitz function $f:X\to \R$ (see \cref{def:subdifferential}).\\
$\omega(f)\dotfill$& for a locally Lipschitz function $f:X\to \R$, the set of its differentiability points (see \cref{def:subdifferential}).\\
$\mathrm{cut}(\mathbf{L})\dotfill$ & Cut--locus of $\mathbf{L}$ in $G(k,n)$, see \cref{def:cut}.\\
$\Omega(j)\dotfill$& For $j=1, \ldots, k$, stratum of the cut--locus defined by \eqref{cutstrataexplicit}.\\
$A_W\dotfill$& For an orthogonal matrix $W\in O(j)$, the matrix defined in \eqref{AWdef}.\\
$NX\dotfill$&Normal bundle to a smooth manifold $X\subset G(k,n)$.\\
$\Omega_s(\mathbf{W})\dotfill$&For $s=1, \ldots, k$ and $\mathbf{W}\in G(k,n)$, the Schubert variety defined by $\{\dim(\mathbf{E}\cap \mathbf{W})\geq s\}$, see \eqref{omegadef}.\\
$\mathrm{St}(k,n-k)\dotfill$&The Stiefel manifold of orthonormal $k$--frames in $\R^{n-k}$, \cref{sec:pfaffian}.\\
$A_\mathbf{L}\dotfill$&The matrix defined by \eqref{eq:AL}.\\
\end{xltabular}

\bibliographystyle{plain}
\bibliography{bibliototale}

\end{document}